\DeclareMathOperator*{\card}{Card}
\DeclareMathOperator*{\supp}{supp}
\DeclareMathOperator*{\Lper}{{\mathbb{L}^2_{\mathrm{per}}}}
\DeclareMathOperator*{\FT}{\mathcal{F}}
\newcommand{\Bspr}{{\mathbf{B}}^s_{p,r}(M)}
\newcommand{\ophi}{\overline{\phi}}
\newcommand{\opsi}{\overline{\psi}}
\theoremstyle{plain}
\newtheorem{theorem}{Theorem}[section]
\newtheorem{proposition}{Proposition}[section]
\newtheorem{remark}{Remark}[section]
\newtheorem{lemma}{Lemma}[section]
\newtheorem{example}{Example}[section]
\begin{document}
\begin{frontmatter}

\title{Block thresholding for wavelet-based estimation of function derivatives from a heteroscedastic multichannel convolution model}
\runtitle{Wavelet multichannel deconvolution}

\begin{aug}
\author{\fnms{Fabien} \snm{Navarro}\ead[label=e3]{fabien.navarro@greyc.ensicaen.fr}}
\address{Laboratoire de Math\'{e}matiques Nicolas Oresme,\\
CNRS-UMR 6139, Universit\'{e} de Caen, Campus II, 14032 Caen, France\\
GREYC CNRS-ENSICAEN-Universit\'{e} de Caen, 14050 Caen France\\
\printead{e3}}
\author{\fnms{Christophe} \snm{Chesneau}\corref{}\ead[label=e1]{chesneau@math.unicaen.fr}}
\address{Laboratoire de Math\'{e}matiques Nicolas Oresme,\\
CNRS-UMR 6139, Universit\'{e} de Caen, Campus II, BP 5186\\
Boulevard du Mar\'{e}chal Juin, F-14032 Caen, France\\
\printead{e1}}
\author{\fnms{Jalal} \snm{Fadili}\ead[label=e2]{Jalal.Fadili@greyc.ensicaen.fr}}
\address{GREYC CNRS-ENSICAEN-Universit\'{e} de Caen,\\
6 Bd du Mar\'echal Juin, 14050 Caen France\\
\printead{e2}}
\medskip
\and
\author{\fnms{Taoufik} \snm{Sassi}\ead[label=e4]{taoufik.sassi@math.unicaen.fr}}
\address{Laboratoire de Math\'{e}matiques Nicolas Oresme, \\
CNRS-UMR 6139, Universit\'{e} de Caen, Campus II, BP 5186 \\
Boulevard du Mar\'{e}chal Juin, F-14032 Caen, France \\
\printead{e4}}
\runauthor{F. Navarro et al.}
\end{aug}

\begin{abstract}
We observe $n$ heteroscedastic stochastic processes $\{Y_v(t)\}_{v}$, where for any $v\in\{1,\ldots,n\}$ and $t \in [0,1]$, $Y_v(t)$ is the convolution product of an unknown function $f$ and a known blurring function $g_v$ corrupted by Gaussian noise. Under an ordinary smoothness assumption on $g_1,\ldots,g_n$, our goal is to estimate the $d$-th derivatives (in weak sense) of $f$ from the observations. We propose an adaptive estimator based on wavelet block thresholding, namely the "BlockJS estimator". Taking the mean integrated squared error (MISE), our main theoretical result investigates the minimax rates over Besov smoothness spaces, and shows that our block estimator can achieve the optimal minimax rate, or is at least nearly-minimax in the least favorable situation. We also report a comprehensive suite of numerical simulations to support our theoretical findings. The practical performance of our block estimator compares very favorably to existing methods of the literature on a large set of test functions.
\end{abstract}

\begin{keyword}[class=AMS]
\kwd[Primary ]{ 62G07}
\kwd{62G20}
\kwd[; secondary ]{62F12}
\end{keyword}

\begin{keyword}
\kwd{deconvolution}
\kwd{multichannel observations}
\kwd{derivative estimation}
\kwd{wavelets}
\kwd{block thresholding}
\kwd{minimax}
\end{keyword}
\tableofcontents
\end{frontmatter}
%
%
\section{Introduction}
\subsection{Problem statement}
Suppose that we observe $n$ stochastic processes $Y_1(t),\ldots,Y_n(t)$, $t\in \lbrack 0,1 \rbrack$ where, for any $v\in \{1,\ldots,n\}$,
\begin{equation}\label{sous}
dY_v(t)= (f\star g_v)(t)dt + \epsilon dW_v(t), \qquad t \in [0,1], \qquad n \in \mathbb{N}^*,
\end{equation}
$\epsilon>0$ is the noise level, $(f\star g_v)(t)=\int_{0}^{1}f(t-u)g_v(u)du$ denotes the convolution product on $[0,1]$, $W_1(t),\ldots,W_n(t)$ are $n$ unobserved independent standard Brownian motions, for any $v\in \{1,\ldots,n\}$, $g_{v}  : [0,1]\rightarrow \mathbb{R}$ is a known blurring function and $f : [0,1]\rightarrow \mathbb{R}$ is the unknown  function that we target.
We assume that $f$ and $g_1,\ldots,g_n$ belong to $\Lper([0,1])=\{ h ;$ \ $h$ is $1$-periodic on $[0,1]$ and $\int_{0}^{1}h^2(t)dt<\infty\}$. The focus of this paper is to estimate $f$ and its derivatives $f^{(d)}$ (to be understood in weak or distributional sense) from $Y_1(t),\ldots,Y_n(t)$, $t\in \lbrack 0,1 \rbrack$. This is in general a severely ill-posed inverse problem. Application fields cover biomedical imaging, astronomical imaging, remote-sensing, seismology, etc. This list is by no means exhaustive.
\\

In the following, any function $h \in \Lper(\lbrack 0,1 \rbrack)$ can be represented by its Fourier series
\[
h(t) = \sum_{\ell \in \mathbb{Z}} \FT(h)(\ell) e^{2 i\pi \ell t}, \ \ \ \ \ \ t\in [0,1],
\]
where the equality is intended in mean-square convergence sense, and $\FT_{\ell}(h)$ denotes the Fourier series coefficient given by 
\[
\FT(h)(\ell)=\int_{0}^{1}h(t)e^{-2 i\pi \ell t}dt, \qquad \ell\in \mathbb{Z},
\]
whenever this integral exists. The notation $\overline{~\cdot~}$ will stand for the complex conjugate.

\subsection{Overview of previous work}
There is an extensive statistical literature on wavelet-based deconvolution problems. For obvious space limitations, we only focus on some of them.

In the special case where $g_1=\dots=g_n$, \eqref{sous} reduces to the form
\begin{equation}\label{sous2}
d\widetilde{Y}(t)= (f \star g_1)(t)dt + \epsilon n^{-1/2}d\widetilde{W}(t),\qquad t\in [0,1],
\end{equation}
where $\widetilde{Y}(t)=(1/n)\sum_{v=1}^nY_v(t)$, and $\widetilde{W}(t)=(1/n^{1/2})\sum_{v=1}^nW_v(t)$ is standard Brownian motion. In such a case, \eqref{sous2} becomes the standard deconvolution which attracted attention of a number of researchers spanning a wide range of fields including signal processing and statistics. For instance, wavelet-based estimators of $f$ have been constructed and their asymptotic performance investigated in a number of papers, see e.g. \cite{Donoho95,cavalier08,cavalier6,cavalier02,chesneau13,raimondo2}. When $g_1,\ldots,g_n$ are not necessarily equal, estimators of $f$ and their minimax rates under the mean integrated squared error (MISE) over Besov balls were proposed in \cite{candi,penskyy,penskyyy,penskyyyy}. These authors develop  wavelet thresholding estimators (hard thresholding in \cite{candi,penskyyyy} and block thresholding in \cite{penskyy,penskyyy}) under various assumptions on $g_1,\ldots,g_n$ (typically, ordinary smooth and super-smooth case, or boxcar blurring functions).

Estimating the derivative of a function on the basis of noisy and blurred observations is of paramount importance in many fields such as signal processing, control or mathematical physics. For instance detecting the singularities of $f$ or characterizing its concavity or convexity properties is a longstanding problem in signal processing. The estimation of the derivatives from noisy solutions of ordinary or partial differential equations is typical in many areas of mathematical physics such as astronomy or fluid dynamics. In the case where $d=0$, several examples of recovering initial/boundary conditions from observations of a noisy and blurred solution of a PDE (\textit{e.g.}, parabolic, hyperbolic or elliptic PDE) are given in \cite{penskyy}. For higher-order derivatives (typically $d=1$ or $d=2$), there are also physical applications where such a model occurs. We mention for instance frequency self-deconvolution encountered in the field of chemometrics. In this context, the first and the second derivatives can be used to detect important information or features in the raw spectra; see e.g. \cite{Mbaideen11}). The wavelet estimator developed in the paper could be an interesting alternative to commonly used methods in this area.

The derivatives estimation have already been investigated from several standard nonparametric models. If we only restrict the review to wavelet methods, we refer to \cite{cai02,chesneauder} for model \eqref{sous2} and to \cite{prakasa2,chaubey1,chaubey2} for density estimation problems.

\subsection{Contributions and relation to prior work}
In this paper, considering an appropriate ordinary smoothness assumption on $g_1,\ldots,g_n$, we develop an adaptive wavelet-based block estimator $\widehat{f^{(d)}}$ of $f^{(d)}$ from \eqref{sous}, $d\in \mathbb{N}$. It is constructed using a periodized Meyer wavelet basis and a particular block thresholding rule which goes by the the name of BlockJS; see \cite{cai} for the original construction of BlockJS in the standard Gaussian noise model, and \cite{cavalier01,cai02,tsybakov,cfs} for further developments on BlockJS. Adopting the minimax approach under the MISE over Besov balls, we investigate the upper bounds of our estimator. This is featured in Theorem~\ref{maison}. We prove that the rates of our estimator are nearly optimal by establishing a lower bound as stated in Theorem~\ref{maison2}.

Our work is related to some prior art in the literature. To the best of our knowledge, the closest ones are those of \cite{penskyy,penskyyy}. For the case where $d=0$ and the blurring function is ordinary-smooth or super-smooth, \cite[Theorems 1 and 2]{penskyy,penskyyy} provide the upper and lower bounds of the MISE over Besov balls for a block hard thresholding estimator from the functional deconvolution model\footnote{This is a more general model which reduces to the multichannel deconvolution model when observed at a finite number of distinct points, see \cite[Section~5]{penskyy} for further details.}. These bounds match ours but only for $d=0$. In this respect, our work goes one step further as it tackles the estimation (with a different wavelet estimator) of $f$ and its derivatives. As far as the methods of proof are concerned, we use similar tools (concentration and moment inequalities as well as the general result in \cite{cfs}) as theirs for the upper bound, but the proof of the lower bounds are different. However unlike \cite{penskyy}, we only cover the ordinary smooth convolution, while their results apply also to the super-smooth case. On the practical side, for $d=0$, we will show in Section~\ref{sec:sim} that BlockJS behaves better than block hard thresholding \cite[(2.9)]{penskyyy} over several test functions that contain different degrees of irregularity.

\subsection{Paper organization}
The paper is organized as follows. Section~\ref{besov} gives a brief account of periodized Meyer wavelets and Besov balls. Section~\ref{sec:estim} states ordinary smoothness assumption on $g_1,\ldots,g_n$, and then constructs the BlockJS-based estimator. The minimax upper and lower bounds of this estimator are investigated in Section~\ref{sec:minimax}. Section~\ref{sec:sim} describes and discusses the simulation results, before drawing some conclusions in Section~\ref{sec:conclusion}. The proofs are deferred to Section~\ref{sec:proofs} awaiting inspection by the interested reader.

\section{Wavelets and Besov balls}\label{besov}
\subsection{Periodized Meyer Wavelets}\label{perio}
We consider an orthonormal wavelet basis generated by dilations and translations of  a "father" Meyer-type wavelet $\phi$ and a "mother" Meyer-type wavelet $\psi$. These wavelets enjoy the following features.
\begin{enumerate}[$\bullet$]
\item They are smooth and frequency band-limited, i.e. the Fourier transforms of $\phi$ and $\psi$ have compact supports with
\begin{equation}\label{grop}
\begin{cases} 
\supp \left( \FT(\phi)\right) \subset \lbrack -4\pi 3^{-1}, 4\pi 3^{-1}\rbrack,\\
\supp\left( \FT(\psi)\right) \subset \lbrack -8\pi 3^{-1}, -2\pi 3^{-1}\rbrack \cup \lbrack 2\pi 3^{-1}, 8\pi 3^{-1}\rbrack,
\end{cases}
\end{equation}
where $\supp$ denotes the support.
\item The functions $(\phi,\psi)$ are $C^\infty$ as their Fourier transforms have a compact support, and $\psi$ has an infinite number of vanishing moments as its Fourier transform vanishes in a neighborhood of the origin:
\begin{equation}\label{reg2}
\int_{-\infty}^{+\infty}t^u\psi(t)dt=0, \quad \forall ~ u \in \mathbb{N}.
\end{equation}
\item If the Fourier transforms of $\phi$ and $\psi$ are also in $C^m$ for a chosen $m \in \mathbb{N}$, then it can be easily shown that $\phi$ and $\psi$ obey
\begin{equation}\label{ratratrat}
|\phi(t)|=O\left((1+|t|)^{-m-1}\right), \qquad |\psi(t)|=O\left((1+|t|)^{-m-1}\right)
\end{equation}
for every $t \in \mathbb{R}$.
\end{enumerate}

For the purpose of this paper, we use the periodized wavelet bases on the unit interval. For any $t \in[0,1]$, any integer $j$ and any $k\in\{0,\ldots,2^j-1\}$, let 
\begin{equation*}
\phi_{j,k}(t)=2^{j/2}\phi (2^j t-k), \qquad \psi_{j,k}(t)=2^{j/2}\psi(2^j t-k)
\end{equation*}
 be the elements of the wavelet basis, and
\begin{equation*}
\phi^{\mathrm{per}}_{j,k}(t)=\sum_{\ell\in\mathbb{Z}}\phi_{j,k}(t-\ell), \qquad
\psi^{\mathrm{per}}_{j,k}(t)=\sum_{\ell\in\mathbb{Z}}\psi_{j,k}(t-\ell),
\end{equation*}
 their periodized versions. There exists an integer $j_*$ such that the collection $\left\lbrace \phi^{\mathrm{per}}_{j_*,k}(\cdot), k \in \{0,\ldots,2^{j_*}-1\}; \ \psi^{\mathrm{per}}_{j,k}(.), \quad j \ge j_*, \ k\in \{0,\ldots,2^j-1\}\right\rbrace $ forms an orthonormal basis of $\Lper(\lbrack0,1 \rbrack)$. In what follows, the superscript ``per'' will be dropped from $\phi^{\mathrm{per}}$ and $\psi^{\mathrm{per}}$ to lighten the notation.
\\

Let $l \ge j_*$, any function $h \in \Lper(\lbrack 0,1 \rbrack)$ can be expanded into a wavelet series as
\[
h(t)= \sum_{k=0}^{2^l-1}\alpha_{l,k}\phi_{l,k}(t)  +\sum_{j=l}^{\infty}  \sum_{k=0}^{2^j-1} \beta_{j,k}\psi_{j,k}(t), \qquad t \in [0,1],
\]
where
\begin{equation}\label{coef}
\alpha_{l,k}=\int_{0}^{1}h(t)\ophi_{l,k}(t)dt, \qquad \beta_{j,k}=\int_{0}^{1}h(t)\opsi_{j,k}(t)dt.
\end{equation}
See \cite[Vol. 1 Chapter III.11]{meyer} for a detailed account on periodized orthonormal wavelet bases.

\subsection{Besov balls}
Let $0 < M < \infty$, $s>0$, $1 \leq p, r \le \infty$. Among the several characterizations of Besov spaces for periodic functions on $\mathbb{L}^p([0,1])$, we will focus on the usual one based on the corresponding coefficients in a sufficiently $q$-regular (periodized) wavelet basis ($q=\infty$ for Meyer wavelets). More precisely, we say that a function $h$ belongs to the Besov ball $\Bspr$ if and only if $\int_{0}^1 |h(t)|^p dt \leq M$, and there exists a constant $M^*>0$ (depending on $M$) such that the associated wavelet coefficients \eqref{coef} satisfy
\begin{align}\label{besovv}
 2^{j_*(1/2-1/p )}\left(\sum_{k=0}^{2^{j_*}-1}|\alpha_{j_*,k}|^{p}\right)^{1/p}& + \left(\sum_{j=j_*}^{\infty} \left(2^{j(s+1/2-1/p )}\left(\sum_{k=0}^{2^{j}-1}|\beta_{j,k}|^{p}\right)^{1/p}\right)^{r}\right)^{1/r}\nonumber\\
 &\le  M^*,
\end{align}
with a smoothness parameter $0 < s < q$, and the norm parameters $p$ and $r$. Besov spaces capture a variety of smoothness features in a function including spatially inhomogeneous behavior, see \cite{meyer}.

\section{The deconvolution BlockJS estimator}
\label{sec:estim}
\subsection{The ordinary smoothness assumption}
In this study, we focus on the following particular ordinary smoothness assumption on $g_1,\ldots,g_n$. We assume that there exist three constants, $c_g>0$, $C_g>0$ and $\delta> 1$, and $n$ positive real numbers $\sigma_1, \ldots,\sigma_n$ such that, for any $\ell \in \mathbb{Z}$ and any $v\in \{1,\ldots,n\}$, 
\begin{equation}\label{cond}
c_g\frac{1}{(1+ \sigma_v^{2}\ell^2)^{\delta/2}}\le |\FT(g_v)(\ell)|\le C_g\frac{1}{(1+ \sigma_v^{2}\ell^2)^{\delta/2}}.
\end{equation}
This assumption controls the decay of the Fourier coefficients of $g_1,\ldots,g_n$, and thus the smoothness of $g_1,\ldots,g_n$. It is a standard hypothesis usually adopted in the field of nonparametric estimation for deconvolution problems. See e.g. \cite{pevi}, \cite{fan} and \cite{raimondo2}. 

\begin{example}
\label{ex:lap}
Let $\tau_1,\ldots,\tau_n$ be $n$ positive real numbers. For any $v\in \{1,\ldots,n\}$, consider the square-integrable $1$-periodic function $g_v$ defined by
\[
g_v(t)=\frac{1}{\tau_v}\sum_{m\in \mathbb{Z}}e^{-|t+m|/\tau_v}, \qquad t \in [0,1].
\] 
Then, for any $\ell \in \mathbb{Z}$,  $\FT(g_v)(\ell)=2\left( 1+4\pi^2 \ell^2 \tau_v^2\right)^{-1}$ and \eqref{cond} is satisfied with $\delta=2$ and $\sigma_v=2\pi \tau_v$.
\end{example}

In the sequel, we set
\begin{equation}\label{mop}
\rho_n=\sum_{v=1}^n \frac{1}{(1+\sigma_v^{2})^{\delta}}.
\end{equation}
For a technical reason that is not restrictive at all (see Section~\ref{sec:proofs}), we suppose that $\rho_n\ge e$ and $\lim_{n\rightarrow \infty} (\ln \rho_n)^{v}\rho_n^{-1}=0$ for any $v>0$. 

\subsection{BlockJS estimator}
\label{subsec:blockJSestim}
We suppose that $f^{(d)}\in \Lper([0,1])$ and that the ordinary smoothness assumption \eqref{cond} holds, where $\delta$ refers to the exponent in the assumption. We are ready to construct our adaptive procedure for the estimation of $f^{(d)}$. 

Let $j_1 = \lfloor \log_2 (\log \rho_n) \rfloor$ be the coarsest resolution level, and $j_2=\lfloor (1/(2\delta+2d+1))\log_2 (\rho_n/\log \rho_n) \rfloor$, where, for any $a\in \mathbb{R}$, $\lfloor a \rfloor$ denotes the integer part of $a$. For any $j\in\{j_1,\ldots,j_2\}$, let $L= \lfloor \log \rho_n \rfloor$ be the block size.

Let $A_j=\lbrace 1,\ldots, \lfloor 2^{j}L^{-1} \rfloor\rbrace$ be the set indexing the blocks at resolution $j$. For each $j$, let $\{B_{j,K}\}_{K \in A_j}$ be a uniform and disjoint open covering of $\{0,\ldots,2^{j}-1\}$, i.e. $\bigcup_{K\in {A}_j}B _{j,K} =\{0,\ldots,2^j-1\}$ and for any $(K,K') \in {A}_j^2$ with $K \not = K'$, $B_{j,K}\cap B_{j,K'}=\varnothing$ and $\card(B_{j,K})=L$, where $B_{j,K} = \lbrace k\in \{0,\ldots,2^{j}-1\}; \, (K-1)L\le k\le K L-1 \rbrace$ is the $K$th block.
\\

We define the Block James-Stein estimator (BlockJS) of $f^{(d)}$ by
\begin{equation}\label{lecue}
\widehat{f^{(d)}}(t)=\sum_{k=0}^{2^{j_1}-1}\widehat \alpha_{j_1,k}\phi_{j_1,k}(t)+
\sum_{j=j_1}^{j_2} \sum_{K\in A_j }\sum_{k\in B_{j,K}} \widehat \beta_{j,k}^* \psi_{j,k}(t), \quad t \in \lbrack 0,1 \rbrack ,
\end{equation}
where for any resolution $j$ and position $k \in B_{j,K}$ within the $K$th block, the wavelet coefficients of $f^{(d)}$ are estimated via the rule
\[
\widehat \beta_{j,k}^*=\widehat \beta_{j,k} \left(1- \displaystyle\frac{\lambda \epsilon^2 {\rho_n}^{-1}2^{2j (\delta+d) }}{\tfrac{1}{L}\sum_{k\in B_{j,K}}|\widehat \beta_{j,k}|^2}\right)_{+},
\]
with, for any $a\in \mathbb{R}$, $(a)_{+}=\max(a,0)$, $\lambda>0$, and $\widehat \alpha_{j_1,k}$ and $\widehat \beta_{j,k}$ are respectively the empirical scaling and detail coefficients, defined as
\begin{equation*}
\widehat \alpha_{j_1,k} =\frac{1}{\rho_n}\sum_{v=1}^n \frac{1}{(1+\sigma_v^2)^{\delta}}\sum_{\ell\in\mathcal{D}_{j_1}}(2\pi i\ell)^d\frac{\overline{\FT\left(\phi_{j_1,k}\right)}(\ell)}{\FT(g_v)(\ell)}\int_{0}^{1}e^{- 2\pi i\ell t}dY_v(t)
\end{equation*}
and
\begin{equation*}
 \widehat \beta_{j,k}=\frac{1}{\rho_n}\sum_{v=1}^n \frac{1}{(1+\sigma_v^2)^{\delta}}\sum_{\ell\in\mathcal{C}_j}(2\pi i\ell)^d\frac{\overline{\FT\left(\psi_{j,k}\right)}(\ell)}{\FT(g_v)(\ell)}\int_{0}^{1}e^{- 2\pi i\ell t}dY_v(t).
\end{equation*}
Notice that thanks to \eqref{grop}, for any $j\in \{j_1,\ldots,j_2\}$ and $k \in \{0,\ldots,2^{j}-1\}$
\begin{equation}\label{cal}
\left\{
\begin{aligned}
 \mathcal{D}_{j_1}&=\supp\left(\FT\left(\phi_{j_1,k}\right)\right) \subset \lbrack -4\pi 3^{-1}2^{j_1}, 4\pi 3^{-1}2^{j_1}\rbrack,\\
 \mathcal{C}_{j}&=\supp\left(\FT\left(\psi_{j,k}\right)\right)\subset \lbrack -8\pi 3^{-1}2^{j}, -2\pi 3^{-1}2^{j}\rbrack \cup \lbrack 2\pi 3^{-1}2^{j}, 8\pi 3^{-1}2^{j}\rbrack.
\end{aligned}
\right.
\end{equation}

\section{Minimaxity results of BlockJS over Besov balls}
\label{sec:minimax}
\subsection{Minimax upper-bound for the MISE}
Theorem \ref{maison} below determines the rates of convergence achieved by $\widehat{f^{(d)}}$ under the MISE over  Besov balls.
\begin{theorem}\label{maison}
Consider the model \eqref{sous} and recall that we want to estimate $f^{(d)}$ with $d\in \mathbb{N}$. Assume that  $(\phi,\psi)$ satisfy \eqref{ratratrat} for some $m \ge d$ and \eqref{cond} is satisfied.
Let $\widehat{f^{(d)}}$ be the estimator defined by \eqref{lecue} with a large enough $\lambda$. Then there exists a constant $C>0$ such that,
for any $M>0$, $p\ge 1$, $r\ge 1$, $ s>1/p$ and $n$ large enough, we have
\[
\sup_{f^{(d)}\in \Bspr}\mathbb{E}\left(\int_{0}^{1}\left(\widehat{f^{(d)}}(t)-f^{(d)}(t)\right)^2dt\right)\le C \varphi_n,
\]
where
\begin{equation*}
\varphi_n =\left\{
\begin{aligned}
&\rho_n^{-2s/(2s+2\delta+2d+1)},&\text{if}&~p\ge 2,\\
&(\log \rho_n/\rho_n)^{2s/(2s+2\delta+2d+1)},&\text{if}&~p\in[1,2),s>(1/p-1/2)(2\delta+2d+1).
\end{aligned}
\right.
\end{equation*}
\end{theorem}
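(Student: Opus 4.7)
The plan is to work in the orthonormal periodized Meyer basis and, thanks to Parseval, split the MISE into three pieces:
\[
\mathbb{E}\!\int_0^1\!\bigl(\widehat{f^{(d)}}(t) - f^{(d)}(t)\bigr)^{2}\, dt
= \sum_{k=0}^{2^{j_1}-1}\mathbb{E}(\widehat\alpha_{j_1,k}-\alpha_{j_1,k})^2
+ \sum_{j=j_1}^{j_2}\sum_{k=0}^{2^j-1}\mathbb{E}(\widehat\beta_{j,k}^{*}-\beta_{j,k})^2
+ \sum_{j>j_2}\sum_{k=0}^{2^j-1}\beta_{j,k}^2,
\]
where $(\alpha_{j_1,k},\beta_{j,k})$ are the wavelet coefficients of $f^{(d)}$ as in \eqref{coef}. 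The first piece is a pure coarse-scale variance, the last is a deterministic tail bias controlled by \eqref{besovv} together with the choice of $j_2$, and the middle one --- the genuine thresholding error --- will be treated via the block James--Stein oracle inequality of \cite{cfs}.

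First, I would check that $\widehat\alpha_{j_1,k}$ and $\widehat\beta_{j,k}$ are unbiased estimators of $\alpha_{j_1,k}$ and $\beta_{j,k}$: the weak-derivative identity $\FT(f^{(d)})(\ell)=(2\pi i\ell)^d\FT(f)(\ell)$, Plancherel for Fourier series, and the normalization $\sum_v (1+\sigma_v^{2})^{-\delta}/\rho_n=1$ collapse the deterministic part of each empirical coefficient to the true one. Next, I would compute the variance. Since the stochastic integrals $\int_0^1 e^{-2\pi i\ell t}dW_v(t)$ are centered complex Gaussians, independent across $v$ and (up to conjugation) across $\ell$, one gets
\[
\mathrm{Var}(\widehat\beta_{j,k})\;\lesssim\;\frac{\epsilon^2}{\rho_n^{2}}\sum_{v=1}^{n}\frac{1}{(1+\sigma_v^{2})^{2\delta}}\sum_{\ell\in\mathcal{C}_j}\frac{\ell^{2d}\,|\FT(\psi_{j,k})(\ell)|^2}{|\FT(g_v)(\ell)|^2}.
\]
The frequency localization $|\ell|\asymp 2^j$ from \eqref{cal}, the lower bound in \eqref{cond}, Plancherel applied to $\psi_{j,k}$, and the elementary inequality $(1+\sigma_v^{2}2^{2j})^\delta(1+\sigma_v^{2})^{-2\delta}\lesssim 2^{2j\delta}(1+\sigma_v^{2})^{-\delta}$ then yield the uniform estimate $\mathrm{Var}(\widehat\beta_{j,k})\lesssim \sigma_j^2:=\epsilon^2\rho_n^{-1}2^{2j(\delta+d)}$, which is exactly the effective noise level calibrating the threshold in \eqref{lecue}.

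Armed with this variance bound together with Cirelson-type Gaussian concentration for the linear noise form $\widehat\beta_{j,k}-\beta_{j,k}$, I would apply the BlockJS oracle inequality of \cite{cfs}: for $\lambda$ large enough and every block $B_{j,K}$,
\[
\sum_{k\in B_{j,K}}\mathbb{E}(\widehat\beta_{j,k}^{*}-\beta_{j,k})^2\;\le\; C\,\min\!\Bigl(\sum_{k\in B_{j,K}}\beta_{j,k}^2,\; L\sigma_j^2\Bigr) + R_{j,K},
\]
with $R_{j,K}$ negligible thanks to Gaussian tails. Summing over blocks and resolutions $j_1\le j\le j_2$ reduces the stochastic term to the now-classical wavelet block bias/variance trade-off with effective noise $\sigma_j^2$.

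Finally one balances bias and variance. For $p\ge 2$, the Besov embedding gives $\sum_k\beta_{j,k}^2\lesssim 2^{-2js}$ and the sum is split at the optimal level $j^\star\le j_2$ solving $2^{j^\star(2s+2\delta+2d+1)}\asymp\rho_n$: the coarse levels contribute $\sum_{j\le j^\star}2^j\sigma_j^2\asymp 2^{j^\star(2\delta+2d+1)}\epsilon^2/\rho_n$, the fine levels (including the deterministic tail beyond $j_2$) contribute $\sum_{j>j^\star}2^{-2js}\asymp 2^{-2j^\star s}$, and both balance to $\rho_n^{-2s/(2s+2\delta+2d+1)}$. For $p<2$, the $\ell^p$ Besov constraint is exploited inside each block via a H\"{o}lder-type argument, forcing a split at a slightly larger level and producing the extra $(\log\rho_n)^{2s/(2s+2\delta+2d+1)}$ factor; the hypothesis $s>(1/p-1/2)(2\delta+2d+1)$ keeps us in the "regular" (no-elbow) zone. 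The main obstacle I anticipate is the careful verification of the moment and tail hypotheses required by \cite{cfs} in this heteroscedastic multichannel setup with the derivative weight $\ell^d$: the variance of $\widehat\beta_{j,k}$ must be controlled \emph{uniformly} in $j\le j_2$ and $k$, and it is precisely this uniformity that legitimizes the global, $v$-averaged threshold in \eqref{lecue}.
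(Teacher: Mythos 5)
Your proposal is correct and follows essentially the same route as the paper: establish Gaussianity and unbiasedness of the empirical coefficients, bound their variance by $\epsilon^2\rho_n^{-1}2^{2j(\delta+d)}$ using \eqref{cond} and \eqref{cal}, prove a Cirelson-based concentration inequality for the block $\ell^2$ error, and then invoke the general BlockJS result of \cite{cfs}. The only cosmetic differences are that the paper phrases the moment condition as a fourth-moment bound on $\widehat\beta_{j,k}-\beta_{j,k}$ (immediate from Gaussianity given your variance bound) and delegates the entire bias--variance balancing to the cited theorem, whereas you unpack it.
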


Theorem \ref{maison} will be proved using the more general theorem \cite[Theorem 3.1]{cfs}. 
To apply this result, two conditions on the wavelet coefficients estimator are required: a moment condition and a concentration condition. They are established in Propositions \ref{copp} and \ref{cop}, see Section~\ref{sec:proofs} .

\begin{remark}
Theorem~\ref{maison} can be generalized by allowing the decay exponent $\delta$ of $g_1,\ldots,g_n$ to vary across the channels. More precisely, our ordinary assumption \eqref{cond} is uniform over the channels, in the sense that we assumed that the decay exponent $\delta$ is the same for all $v$. Assume now that there exist three constants, $c_g>0$, $C_g>0$, and $2n$ positive real numbers $\sigma_1, \ldots,\sigma_n,\delta_1,\ldots,\delta_n$ with $\min (\delta_1,\ldots,\delta_n)>1/2$ such that, for any $\ell \in \mathbb{Z}$ and any $v\in \{1,\ldots,n\}$,
\begin{equation*}
|\FT(g_v)(\ell)|\le C_g\frac{1}{(1+\sigma_v^2\ell^2)^{\delta_v/2}}.
\end{equation*}
Set 
\[
\omega=\min(\delta_1,\ldots,\delta_n).
\]
Let us now define the BlockJS estimator of $f^{(d)}$ as in Section~\ref{subsec:blockJSestim} but with the weights
\[
 \rho_n=\sum_{v=1}^n \frac{1}{(1+\sigma_v^{2})^{\delta_v}},
\]
resolution levels $j_1 = \lfloor \log_2 (\log \rho_n) \rfloor$, $j_2=\lfloor (1/(2\omega+2d+1))\log_2 (\rho_n/\log \rho_n) \rfloor$, and
\[
\widehat \beta_{j,k}^* = \widehat \beta_{j,k} \left(1- \displaystyle\frac{\lambda \epsilon^2 {\rho_n}^{-1}2^{2j (\omega+d) }}{\tfrac{1}{L}\sum_{k\in B_{j,K}}|\widehat \beta_{j,k}|^2}\right)_{+},
\]
\begin{equation*}
\widehat \alpha_{j_1,k} =\frac{1}{\rho_n}\sum_{v=1}^n \frac{1}{(1+\sigma_v^2)^{\delta_v}}\sum_{\ell\in\mathcal{D}_{j_1}}(2\pi i\ell)^d\frac{\overline{\FT\left(\phi_{j_1,k}\right)}(\ell)}{\FT(g_v)(\ell)}\int_{0}^{1}e^{- 2\pi i\ell t}dY_v(t)
\end{equation*}
and
\begin{equation*}
\widehat \beta_{j,k}=\frac{1}{\rho_n}\sum_{v=1}^n \frac{1}{(1+\sigma_v^2)^{\delta_v}}\sum_{\ell\in\mathcal{C}_j}(2\pi i\ell)^d\frac{\overline{\FT\left(\psi_{j,k}\right)}(\ell)}{\FT(g_v)(\ell)}\int_{0}^{1}e^{- 2\pi i\ell t}dY_v(t).
\end{equation*}
Then Theorem~\ref{maison} holds with the convergence rate
\begin{equation*}
\varphi_n =\left\{
\begin{aligned}
&\rho_n^{-2s/(2s+2\omega+2d+1)},&\text{if}&~p\ge 2,\\
&(\log \rho_n/\rho_n)^{2s/(2s+2\omega+2d+1)},&\text{if}&~p\in[1,2),s>(1/p-1/2)(2\omega+2d+1).
\end{aligned}
\right.
\end{equation*}
Of course, this generalization encompasses the statement of Theorem~\ref{maison}. This generalized result also tells us that the adaptivity of the estimator makes its performance mainly influenced by the best $g_i$, that is the one with the smallest $\delta_i$, which is a nice feature.
\end{remark}

\subsection{Minimax lower-bound for the MISE}
We now turn to the lower bound of the MISE to formally answer the question whether $\varphi_n$ is indeed the optimal rate of convergence or not. This is the goal of Theorem~\ref{maison2} which gives a positive answer. 

\begin{theorem}\label{maison2}
Consider the model \eqref{sous} and recall that we want to estimate $f^{(d)}$ with $d\in \mathbb{N}$. Assume that \eqref{cond} is satisfied. Then there exists a constant $c>0$ such that,
for any $M>0$, $p\ge 1$, $r\ge 1$, $ s>1/p$ and $n$ large enough, we have
\begin{equation*}
\inf_{\widetilde{f^{(d)}}}\sup_{f^{(d)}\in \Bspr}\mathbb{E}\left(\int_{0}^{1}\left(\widetilde{f^{(d)}}(t)-f^{(d)}(t)\right)^2dt\right)\ge c \varphi^*_n,
\end{equation*}
where
\begin{equation*}
\varphi^*_n = (\rho^*_n)^{-2s/(2s+2\delta+2d+1)}, \qquad \rho^*_n=\sum_{v=1}^n\sigma_v^{-2\delta}.
\end{equation*}
and the infimum is taken over all the estimators $\widetilde{f^{(d)}}$ of $f^{(d)}$.
\end{theorem}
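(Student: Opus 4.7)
The plan is to apply the standard reduction of a minimax lower bound to a multiple-hypothesis testing problem via Fano's lemma, using a single-resolution wavelet hypercube tuned to the optimal scale. First, choose $j^*$ by $2^{j^*} \asymp (\rho_n^*)^{1/(2s+2\delta+2d+1)}$, so that $2^{-2j^*s} \asymp \varphi_n^*$ and, for $n$ large, $j^* > j_*$. The Varshamov-Gilbert lemma yields a subset $\Omega \subset \{0,1\}^{2^{j^*}}$ with $|\Omega| \geq 2^{c_0 2^{j^*}}$ whose elements are pairwise at Hamming distance $H(\omega,\omega') \geq c_1 2^{j^*}$. Define the candidate $f^{(d)}$'s by
\begin{equation*}
h_\omega(t) = \gamma_{j^*} \sum_{k=0}^{2^{j^*}-1} \omega_k \, \psi_{j^*, k}(t), \qquad \gamma_{j^*} = c_2 \, 2^{-j^*(s+1/2)}, \qquad \omega \in \Omega.
\end{equation*}
Since the $h_\omega$ are supported at the single scale $j^*$, the approximation part of \eqref{besovv} vanishes and the detail part equals exactly $c_2$; taking $c_2$ small enough therefore ensures $\{h_\omega\}_{\omega\in\Omega}\subset\Bspr$ for all $p,r\geq 1$ and $s>1/p$.

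Next, since $\FT(\psi)$ vanishes in a neighborhood of zero, the $d$-th weak periodic antiderivative $f_\omega$ of $h_\omega$ is well defined via $\FT(f_\omega)(\ell)=\FT(h_\omega)(\ell)/(2\pi i\ell)^d$ for $\ell\neq 0$ and $\FT(f_\omega)(0)=0$, and satisfies $f_\omega^{(d)}=h_\omega$. Orthonormality gives the $L^2$ separation $\|h_\omega-h_{\omega'}\|_2^2 = \gamma_{j^*}^2 H(\omega,\omega') \geq c_1 c_2^2\, 2^{-2j^*s} \gtrsim \varphi_n^*$. Let $P_\omega$ denote the joint law of $(Y_1,\ldots,Y_n)$ under $f=f_\omega$. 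Girsanov's theorem applied to \eqref{sous} together with Parseval gives
\begin{equation*}
\mathrm{KL}(P_\omega \| P_{\omega'}) = \frac{1}{2\epsilon^2} \sum_{v=1}^{n} \sum_{\ell \in \mathcal{C}_{j^*}} \frac{|\FT(h_\omega - h_{\omega'})(\ell)|^2}{(2\pi \ell)^{2d}} \, |\FT(g_v)(\ell)|^2,
\end{equation*}
where we used the frequency localization \eqref{cal}. Invoking the upper bound in \eqref{cond}, the elementary estimate $(1+\sigma_v^2\ell^2)^{-\delta}\leq \sigma_v^{-2\delta}\ell^{-2\delta}$, and $|\ell|\asymp 2^{j^*}$ on $\mathcal{C}_{j^*}$, one obtains
\begin{equation*}
\mathrm{KL}(P_\omega \| P_{\omega'}) \lesssim \frac{1}{\epsilon^2}\, 2^{-2j^*(d+\delta)} \, \rho_n^* \, \|h_\omega - h_{\omega'}\|_2^2 \lesssim \frac{c_2^2 \, \rho_n^*}{\epsilon^2}\, 2^{-2j^*(s+d+\delta)} \asymp \frac{c_2^2}{\epsilon^2}\, 2^{j^*}.
\end{equation*}
With $\log|\Omega|\gtrsim 2^{j^*}$, choosing $c_2$ small enough ensures $\mathrm{KL}(P_\omega\|P_{\omega'})\leq \alpha\log|\Omega|$ for any prescribed $\alpha<1/8$.

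A standard Fano-type inequality (e.g., Tsybakov, Theorem~2.5) then converts the separation and KL bounds into
\begin{equation*}
\inf_{\widetilde{f^{(d)}}} \sup_{\omega \in \Omega} \mathbb{E}_{\omega} \int_0^1 \bigl(\widetilde{f^{(d)}}(t) - h_\omega(t)\bigr)^2 dt \geq c \, \varphi_n^*,
\end{equation*}
which implies Theorem~\ref{maison2} since $\{h_\omega\}_{\omega\in\Omega}\subset\Bspr$. The main obstacle is the KL computation: one must carefully pass from $f_\omega$ to $h_\omega=f_\omega^{(d)}$ via division by $(2\pi i\ell)^d$, exploit that $|\ell|\asymp 2^{j^*}$ on $\mathcal{C}_{j^*}$, and bound $\sum_v (1+\sigma_v^2\ell^2)^{-\delta}$ by $2^{-2j^*\delta}\rho_n^*$ on this frequency range, thereby recovering the exact quantity $\rho_n^*=\sum_v \sigma_v^{-2\delta}$ appearing in the statement (as opposed to the $\rho_n$ of the upper bound, which stems from the reversed inequality on $|\FT(g_v)(\ell)|$).
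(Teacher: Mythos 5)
Your proposal is correct and follows essentially the same route as the paper: a single-scale wavelet hypercube of hypotheses whose $d$-th derivatives lie in $\Bspr$, Varshamov--Gilbert for the separation, the Girsanov/Parseval computation of the Kullback--Leibler divergence with the upper bound in \eqref{cond} and $(1+\sigma_v^2\ell^2)^{-\delta}\le \sigma_v^{-2\delta}\ell^{-2\delta}$ to produce $\rho_n^*$, and a Fano-type lemma at the scale $2^{j^*}\asymp(\rho_n^*)^{1/(2s+2\delta+2d+1)}$. The only cosmetic differences are that you define the antiderivative in the Fourier domain while the paper uses the Cauchy repeated-integration formula, and you tune the amplitude constant $c_2$ where the paper tunes the constant in the choice of $j_0$.
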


It can then be concluded from Theorem~\ref{maison} and Theorem~\ref{maison2} that the rate of convergence $\varphi_n$ achieved by $\widehat{f^{(d)}}$ is near optimal in the minimax sense. Near minimaxity is only due to the case $p \in [1,2)$ and $s>(1/p-1/2)(2\delta+2d+1)$ where there is an extra logarithmic term. 

\section{Simulations results}
\label{sec:sim}
In the following simulation study we consider the problem of estimating one of the derivatives of a function $f$ from the heteroscedastic multichannel deconvolution model \eqref{sous}. Three test functions (``Wave'', ``Parabolas'' and ``TimeShiftedSine'', initially introduced in \citep{marron}) representing different degrees of smoothness were used (see \textsc{Fig}~\ref{fig:orig}). The ``Wave'' function was used to illustrate the performance of our estimator on a smooth function. Note that the ``Parabolas'' function has big jumps in its second derivative.

\begin{figure}[t]
\hspace*{-0.5cm}
\begin{tabular}{ccc}
\begin{minipage}{0.32\textwidth}
\includegraphics[width=\textwidth]{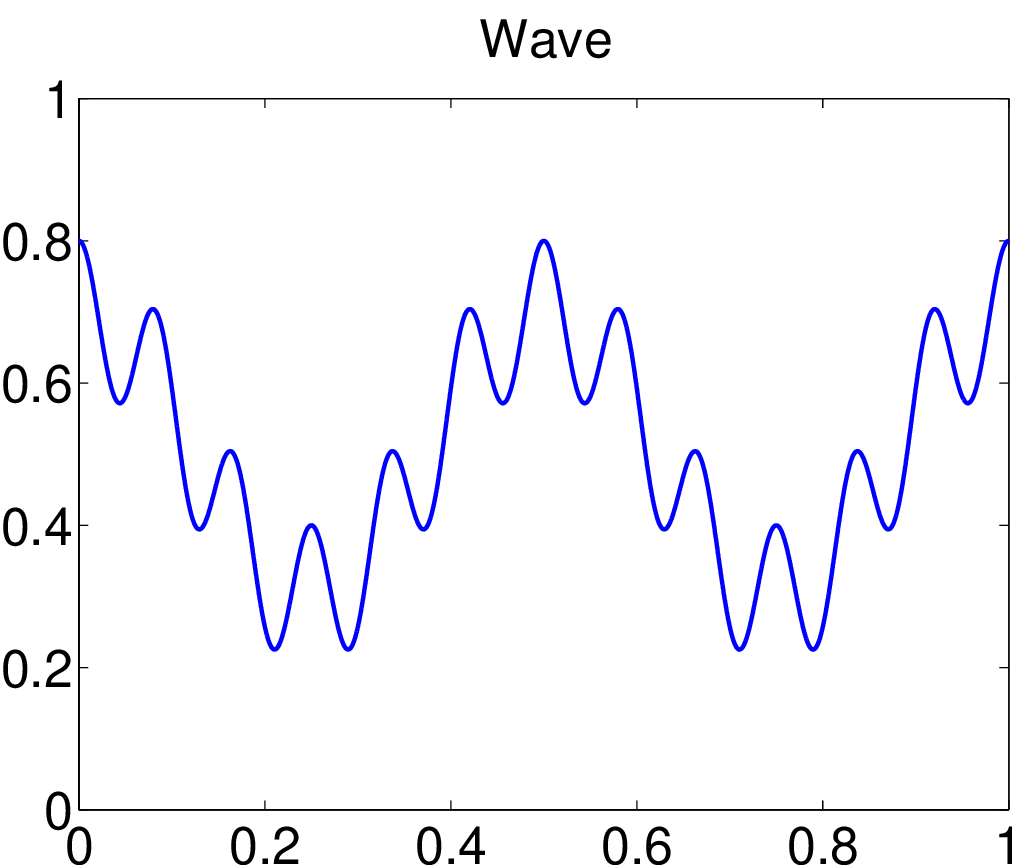} \\
\centerline{(a)}
\end{minipage}&
\begin{minipage}{0.32\textwidth}
\includegraphics[width=\textwidth]{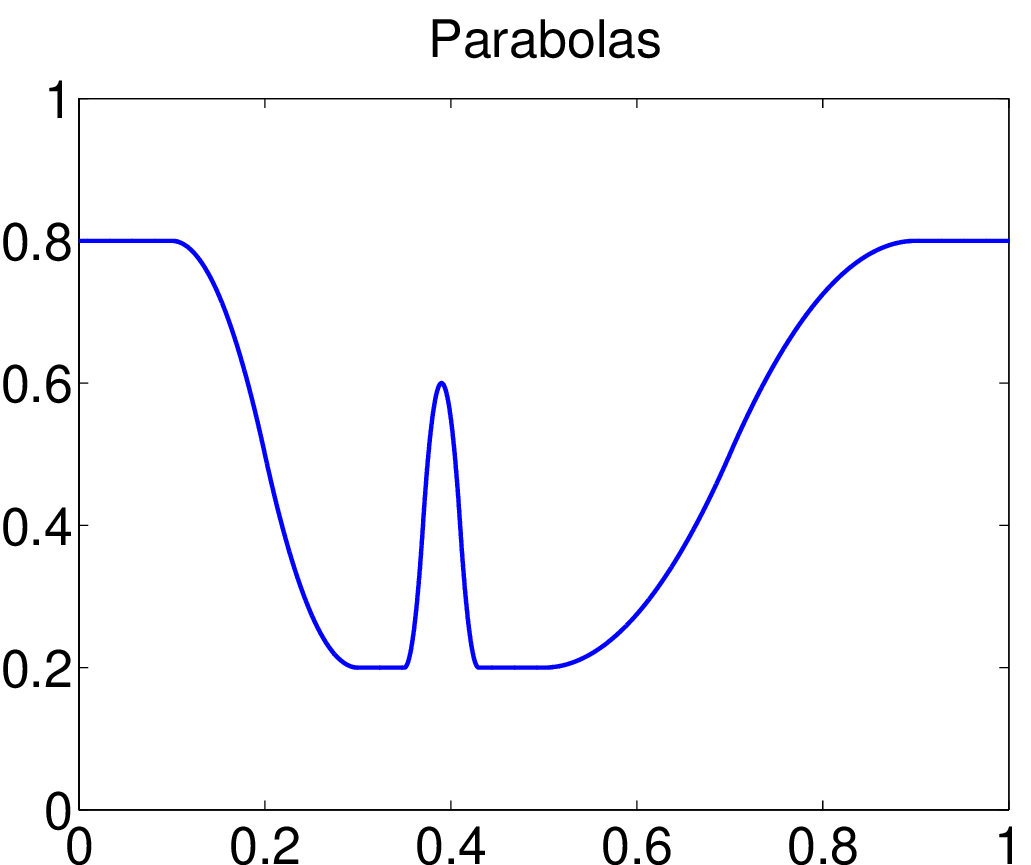} \\
\centerline{(b)}
\end{minipage}&
\begin{minipage}{0.32\textwidth}
\includegraphics[width=\textwidth]{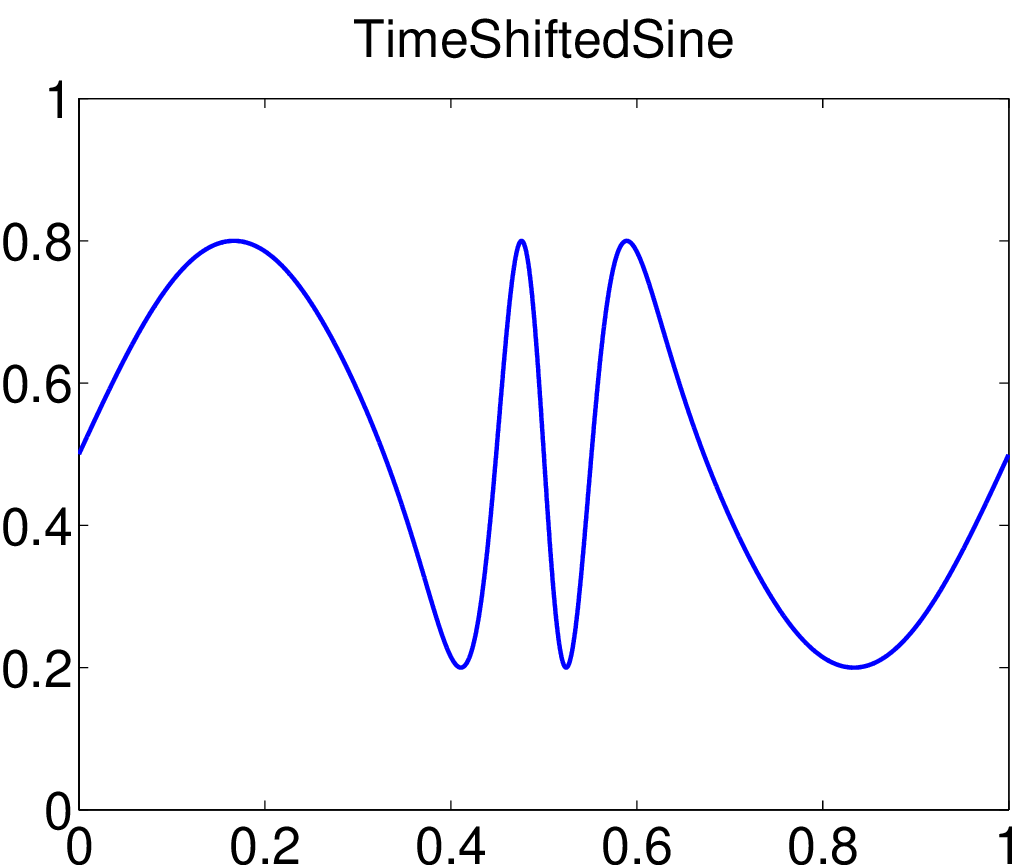} \\
\centerline{(c)}
\end{minipage}
 \end{tabular}
\caption{Original Signals   (a): Wave. (b): Parabolas.  (c): TimeShiftedSine.}
\label{fig:orig}
\end{figure}
We have compared the numerical performance of BlockJS to state-of-the-art classical thresholding methods of the literature. In particular we consider the block estimator of \cite{penskyyy} and two term-by-term thresholding methods. 
The first one is the classical hard thresholding and the other one corresponds to the non-negative garrote (introduced in wavelet estimation by \cite{Gao}). In the sequel, we name the estimator of \cite{penskyyy} by 'BlockH', the one of \cite{Gao} by 'TermJS' and our estimator by 'BlockJS'. For numerical implementation, the test functions were finely discretized by taking $T$ equispaced samples $t_i=i/T \in [0,1]$, $i=0,\ldots,T-1$. The deconvolution estimator was efficiently implemented in the Fourier domain given that Meyer wavelets are frequency band-limited. The performance of the estimators are measured in terms of peak signal-to-noise ratio (PSNR~$=10\log_{10}\frac{\max_{t_i \in [0,1]} |f^{(d)}(t_i)|^2}{\sum_{i=0}^{T-1} (f^{(d)}(t_i)- f^{(d)}(t_i))^2/T}$) in decibels (dB)). For any $v \in \{1,\ldots,n\}$, the blurring function $g_v$ is that of Example~\ref{ex:lap} and was used throughout all experiments.

\subsection{Monochannel simulation}
As an example of homoscedastic monochannel reconstruction (i.e. $n=1$), we show in \textsc{Fig}~\ref{fig:monopsnr} estimates obtained using the BlockJS method from $T=4096$ equispaced samples generated according to \eqref{sous} with blurred signal-to-noise ratio (BSNR) of $25$ dB (BSNR~$=10\log_{10}\frac{\sum_{i=0}^{T-1}(f \star g_v)(t_i)^2}{T\epsilon^2}$ dB). For $d=0$, the results are very effective for each test function where the singularities are well estimated. The estimator does also a good job in estimating the first and second-order derivatives, although the estimation quality decreases as the order of the derivative increases. This is in agreement with the predictions of the minimaxity results. 
We then have compared the performance of BlockJS with BlockH. The blurred signals were corrupted by a zero-mean white Gaussian noise such that the BSNR ranged from $10$ to $40$ dB. The PSNR values averaged over 10 noise realizations are depicted in \textsc{Fig}~\ref{fig:monopsnrcomp} for $d=0$, $d=1$ and $d=2$ respectively. One can see that our BlockJS thresholding estimator produces quite accurate estimates of $f$, $f'$ and $f''$ for each test function. These results clearly show that our approach compares favorably to BlockH and that BlockJS has good adaptive properties over a wide range of noise levels in the monochannel setting. 

\begin{figure}[htp!]
\hspace*{-0.5cm}
\begin{tabular}{ccc}
\begin{minipage}{0.33\textwidth}
\includegraphics[width=\textwidth]{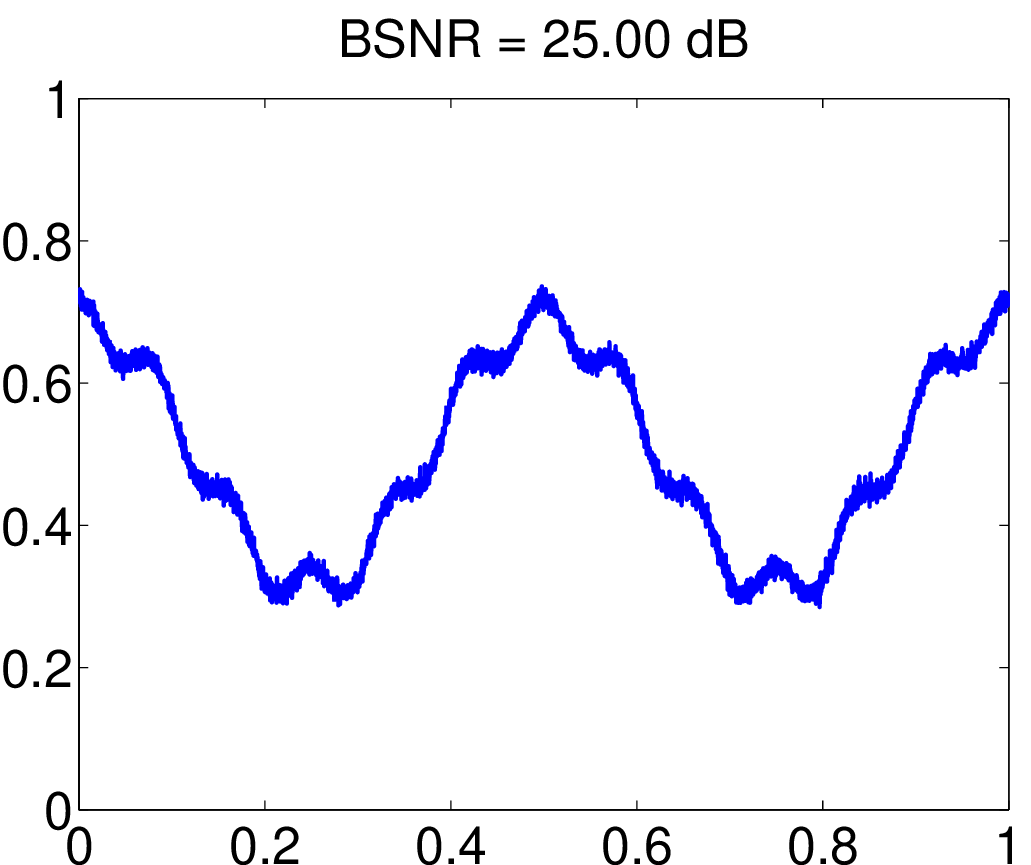}\\
\centerline{}
\end{minipage}&
\begin{minipage}{0.33\textwidth}
\includegraphics[width=\textwidth]{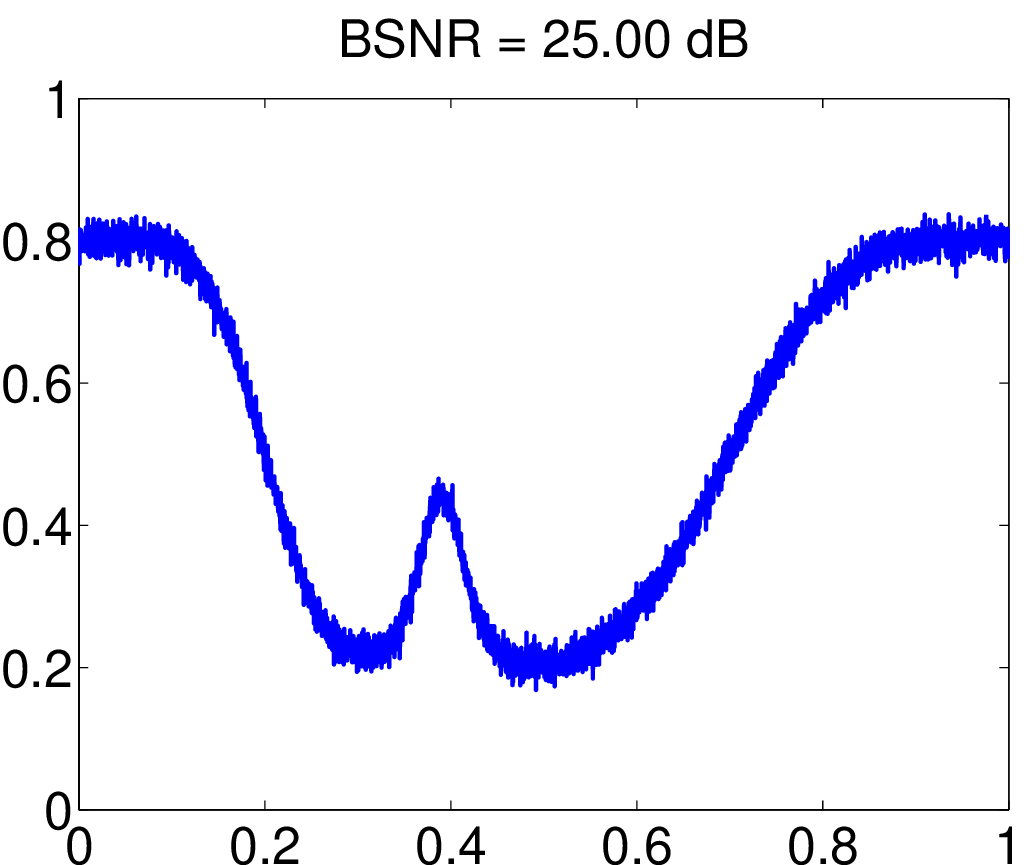}\\
\centerline{(a)}
\end{minipage}&
\begin{minipage}{0.33\textwidth}
\includegraphics[width=\textwidth]{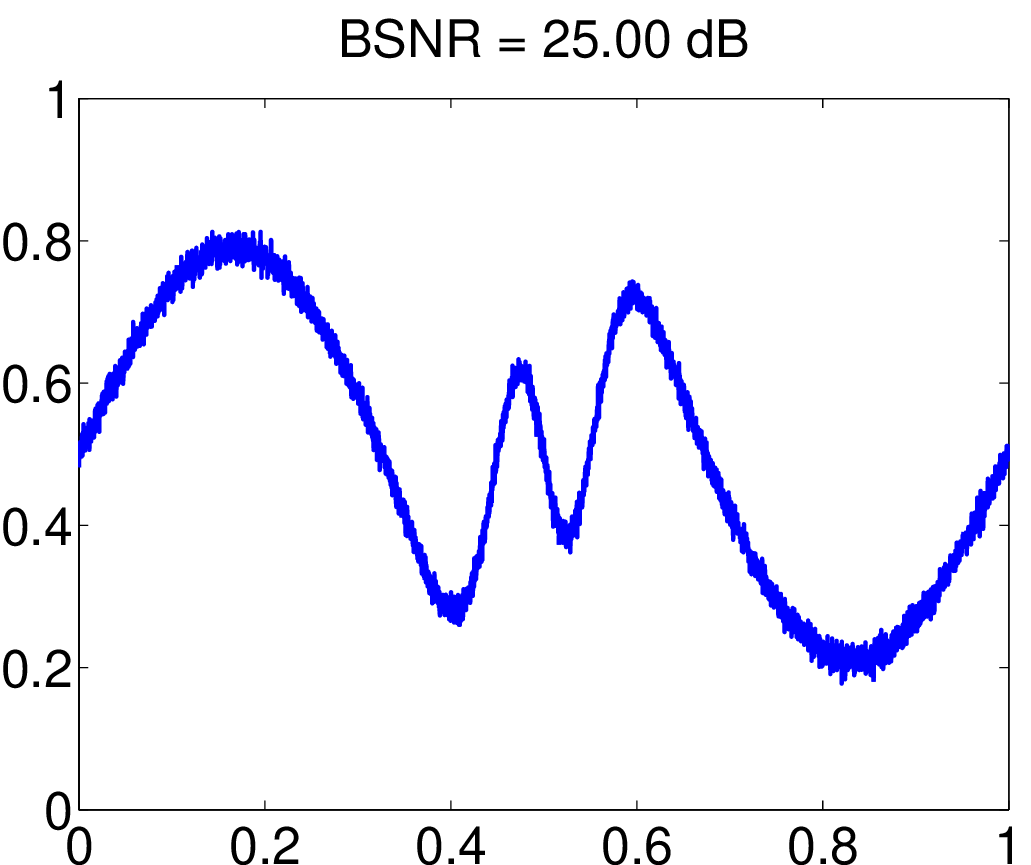}\\
\centerline{}
\end{minipage}
 \end{tabular}
\hspace*{-0.5cm}
\begin{tabular}{ccc}
\begin{minipage}{0.33\textwidth}
\includegraphics[width=\textwidth]{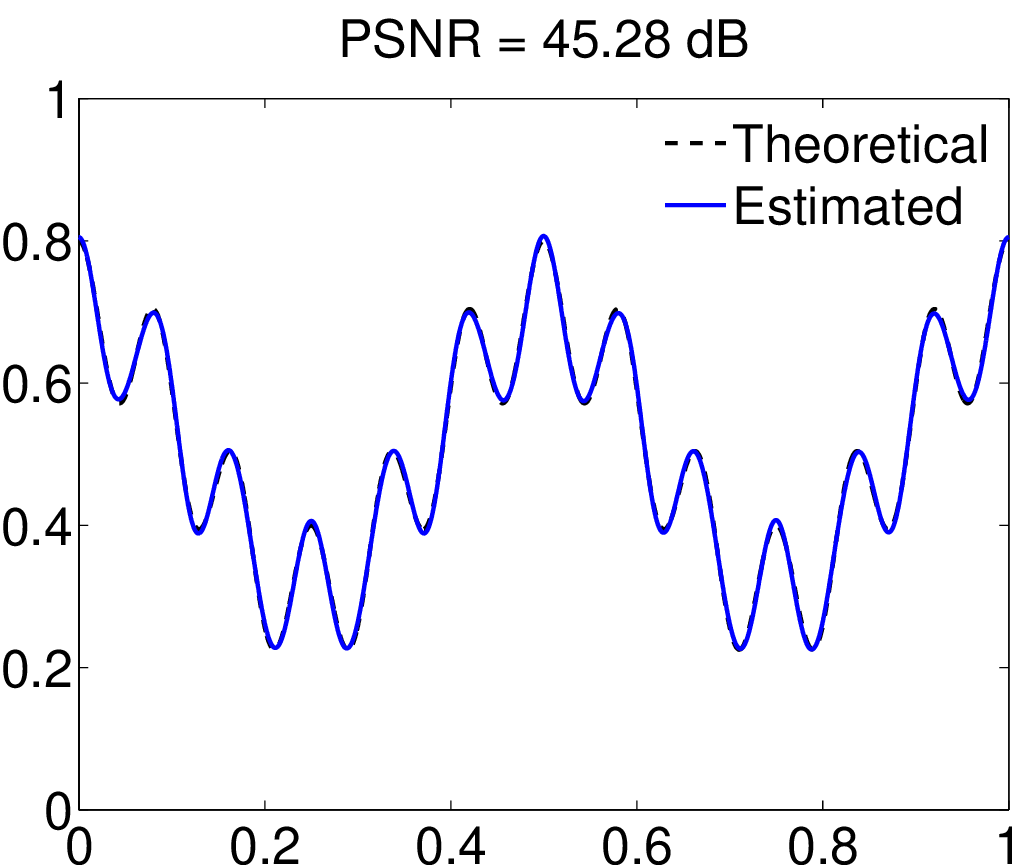}\\
\centerline{}
\end{minipage}&
\begin{minipage}{0.33\textwidth}
\includegraphics[width=\textwidth]{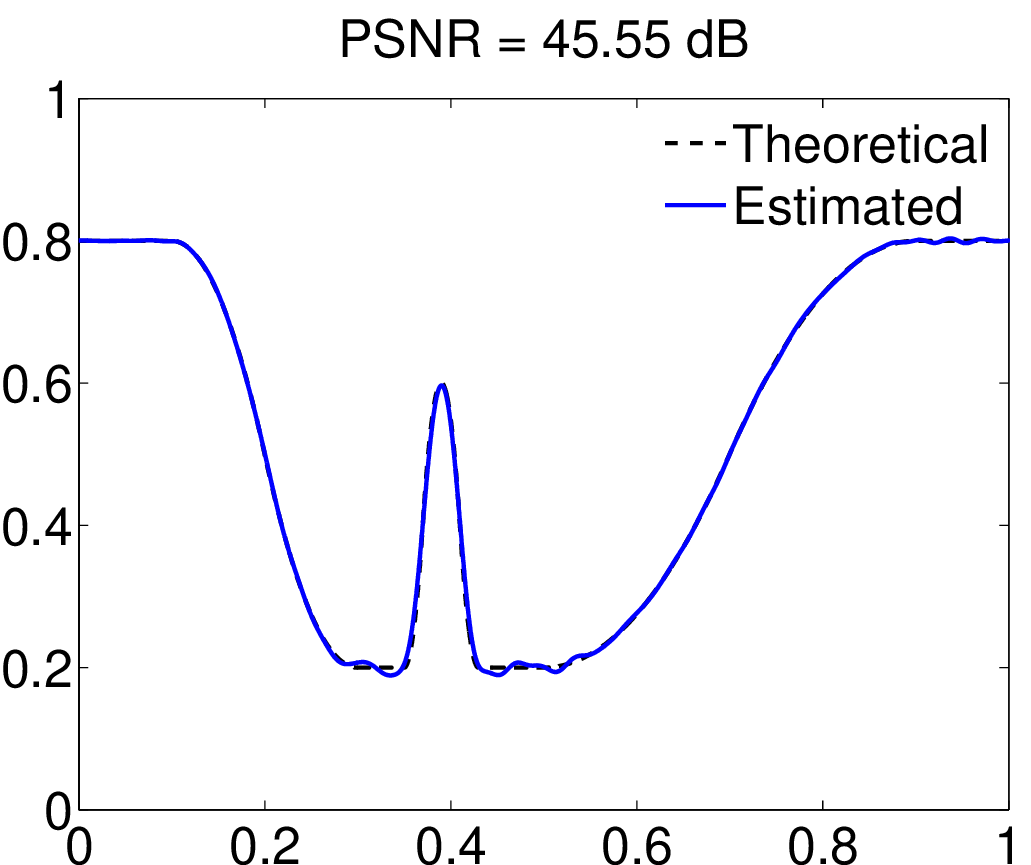}\\
\centerline{(b)~ $d=0$.}
\end{minipage}&
\begin{minipage}{0.33\textwidth}
\includegraphics[width=\textwidth]{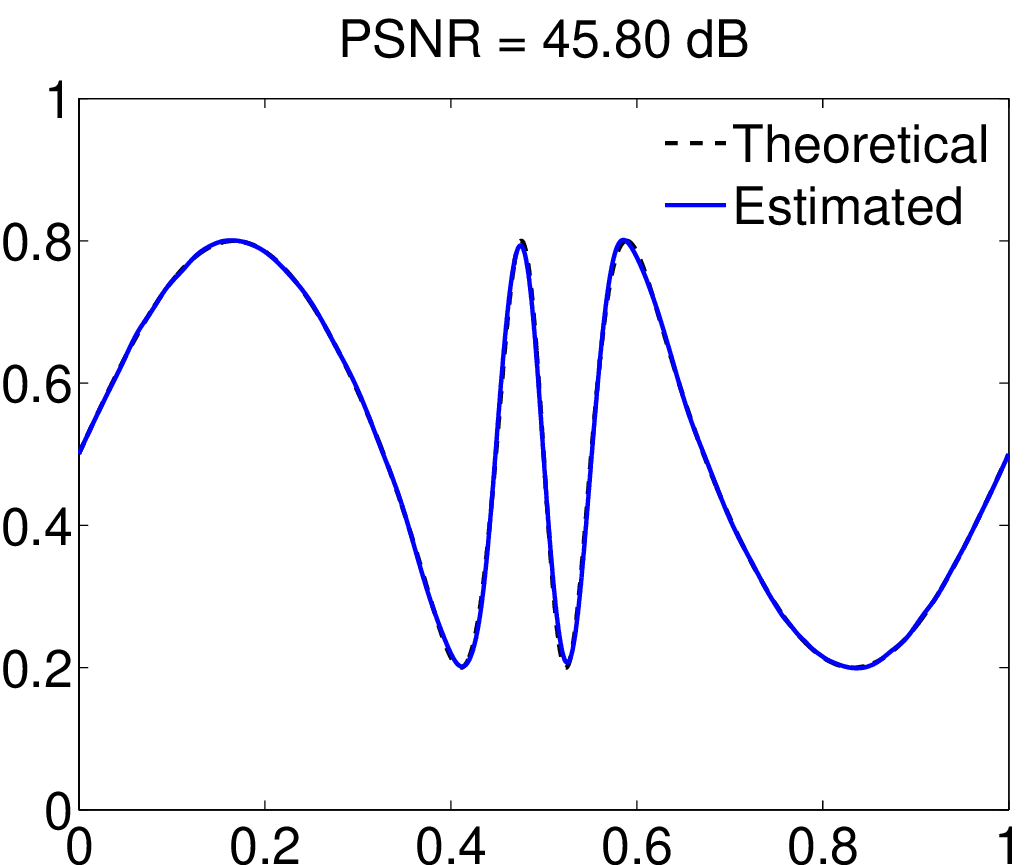}\\
\centerline{}
\end{minipage}
 \end{tabular}
\hspace*{-0.5cm}
\begin{tabular}{ccc}
\begin{minipage}{0.33\textwidth}
\includegraphics[width=\textwidth]{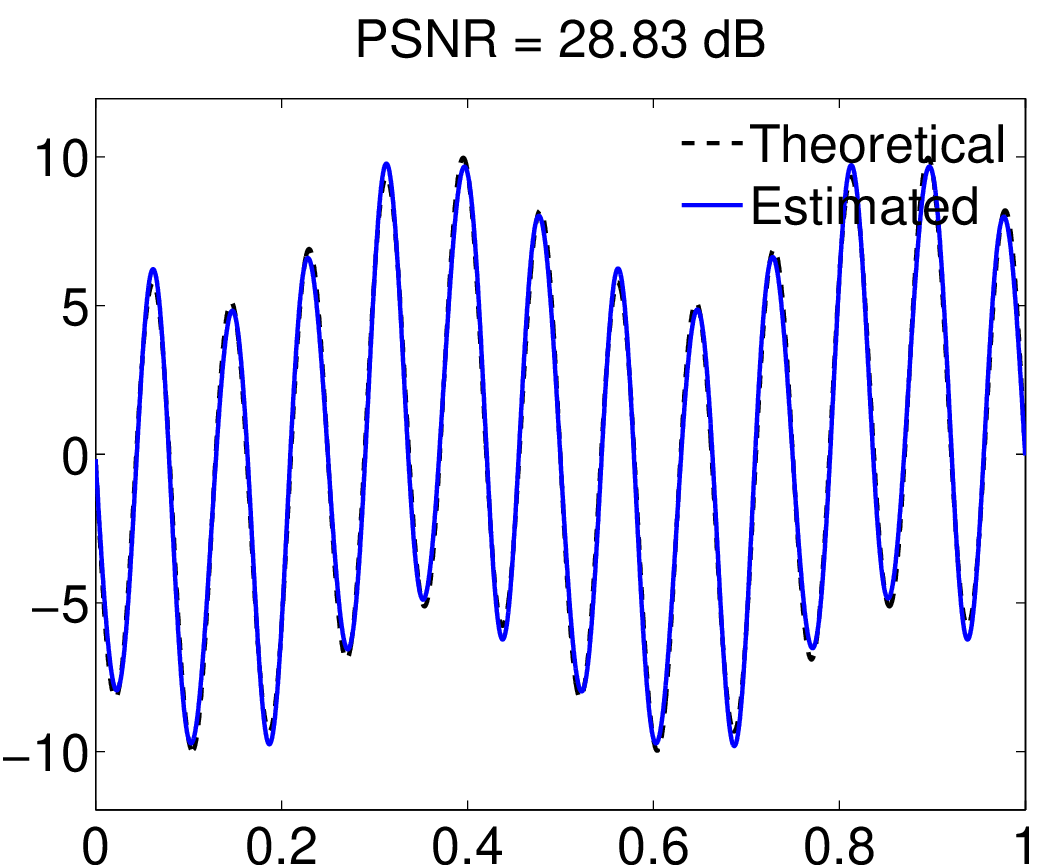}\\
\centerline{}
\end{minipage}&
\begin{minipage}{0.33\textwidth}
\includegraphics[width=\textwidth]{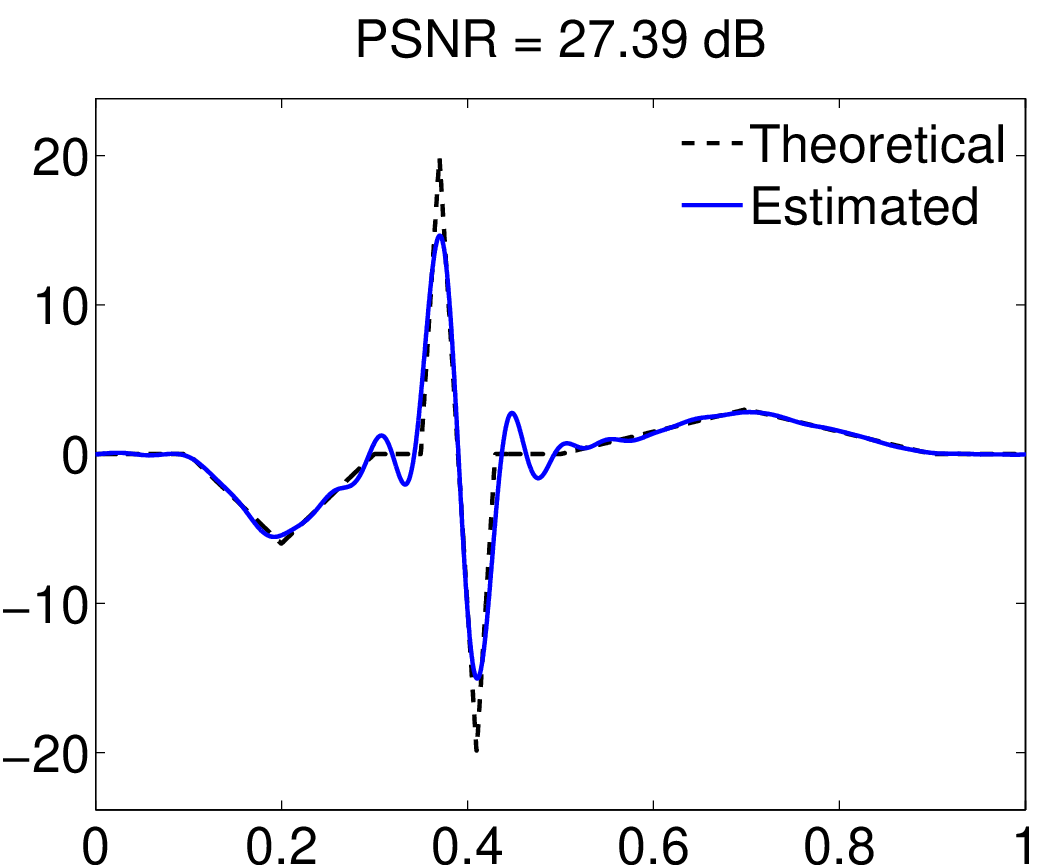}\\
\centerline{(c)~ $d=1$.}
\end{minipage}&
\begin{minipage}{0.33\textwidth}
\includegraphics[width=\textwidth]{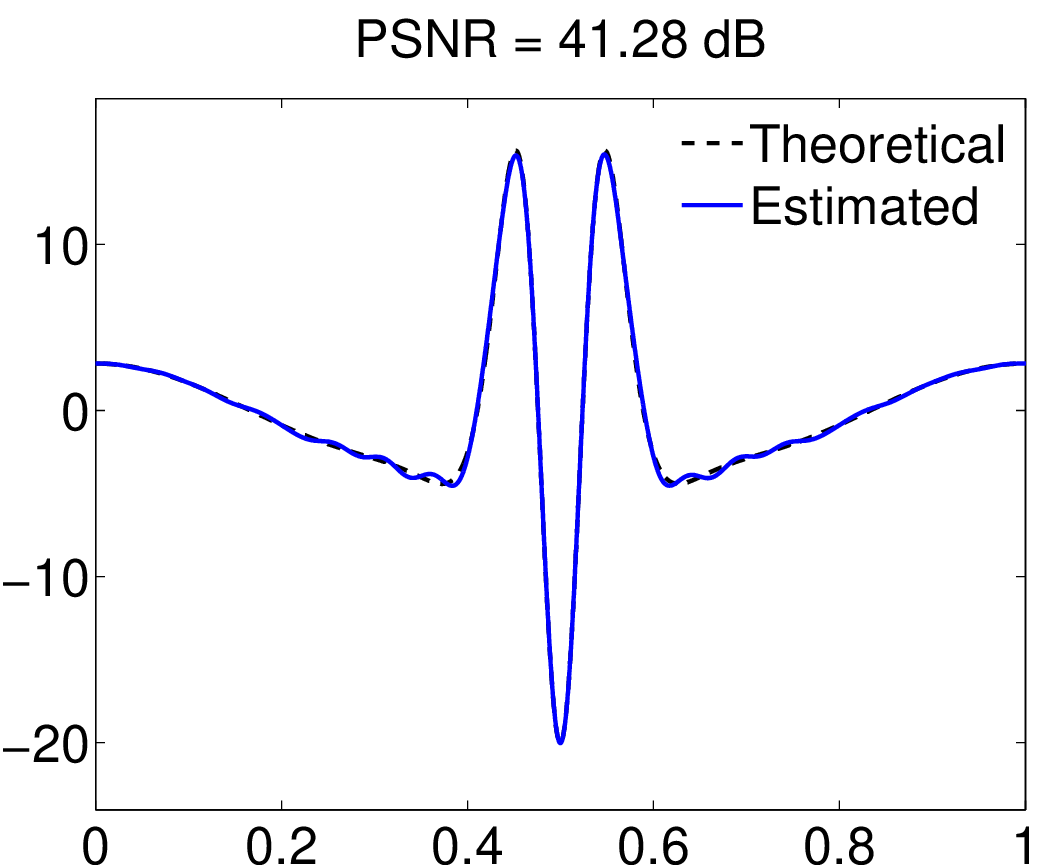}\\
\centerline{}
\end{minipage}
 \end{tabular}
\hspace*{-0.5cm}
\begin{tabular}{ccc}
\begin{minipage}{0.33\textwidth}
\includegraphics[width=\textwidth]{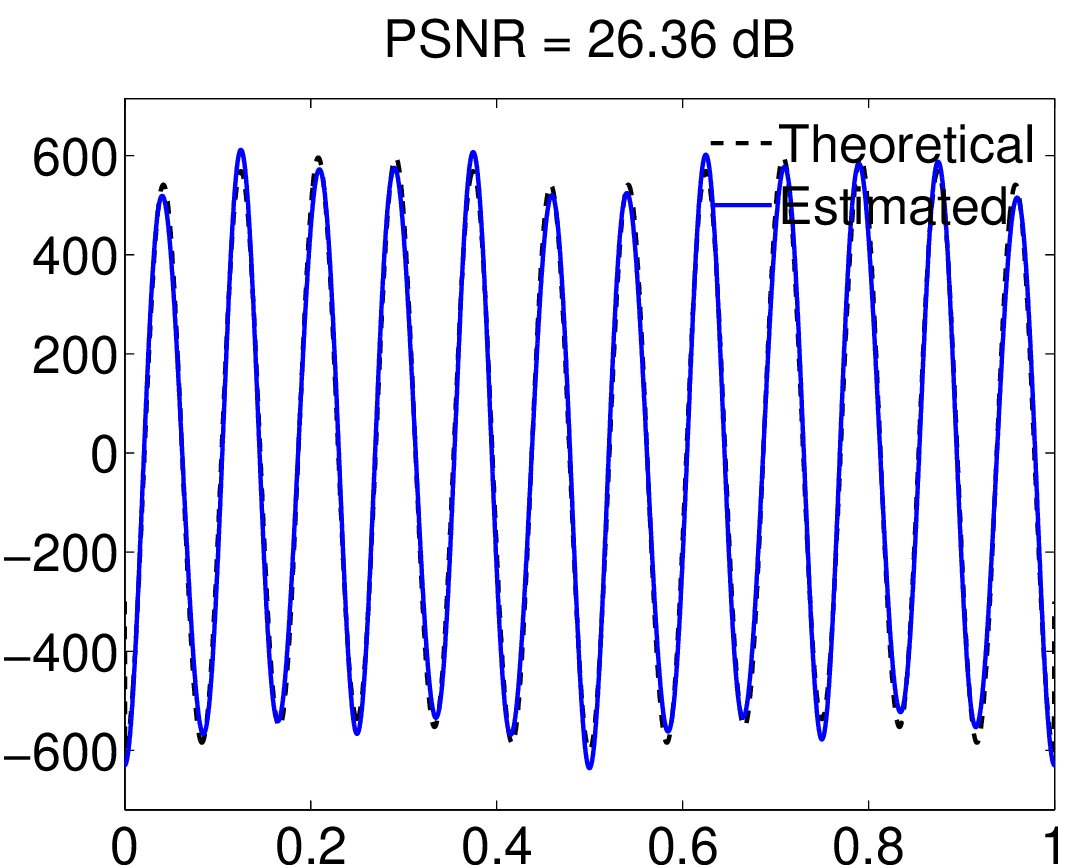}\\
\centerline{}
\end{minipage}&
\begin{minipage}{0.33\textwidth}
\includegraphics[width=\textwidth]{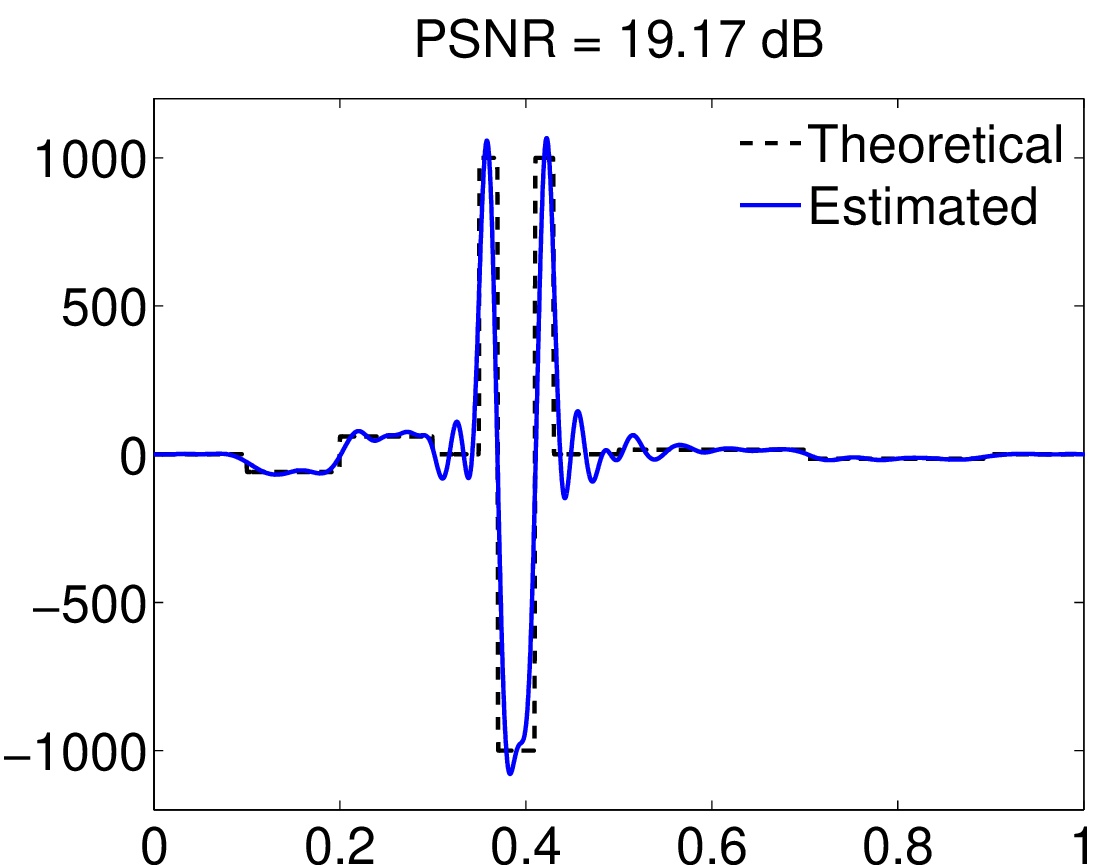}\\
\centerline{(d) ~ $d=2$.}
\end{minipage}&
\begin{minipage}{0.33\textwidth}
\includegraphics[width=\textwidth]{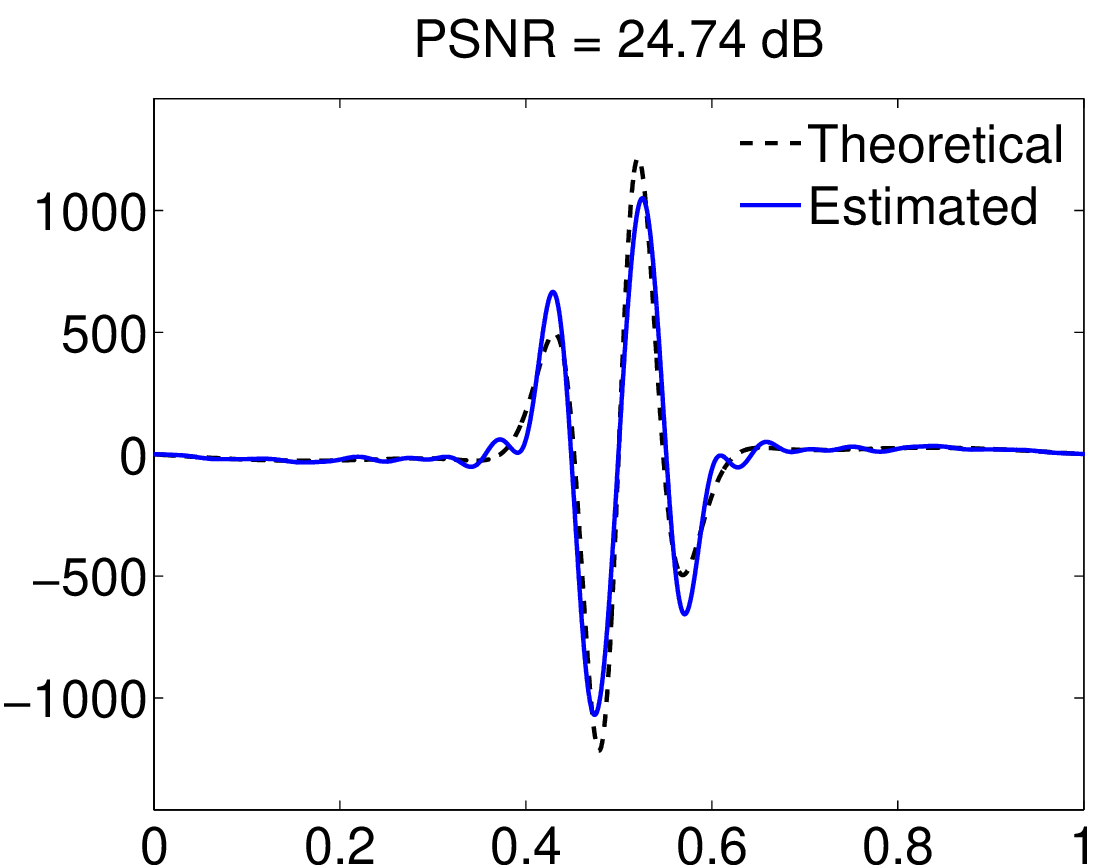}\\
\centerline{}
\end{minipage}
 \end{tabular}
\caption{Original (dashed) and estimated function/derivatives (solid) using the BlockJS estimator applied to noisy blurred observations shown in (a). (b): $d=0$. (c): $d=1$ (d): $d=2$. From left to right Wave, Parabolas and TimeShiftedSine.}
\label{fig:monopsnr}
\end{figure}

\begin{figure}[htp!]
 \centerline{$d=0$}
\hspace*{-0.5cm}
\begin{tabular}{ccc}
\begin{minipage}{0.33\textwidth}
\includegraphics[width=\textwidth]{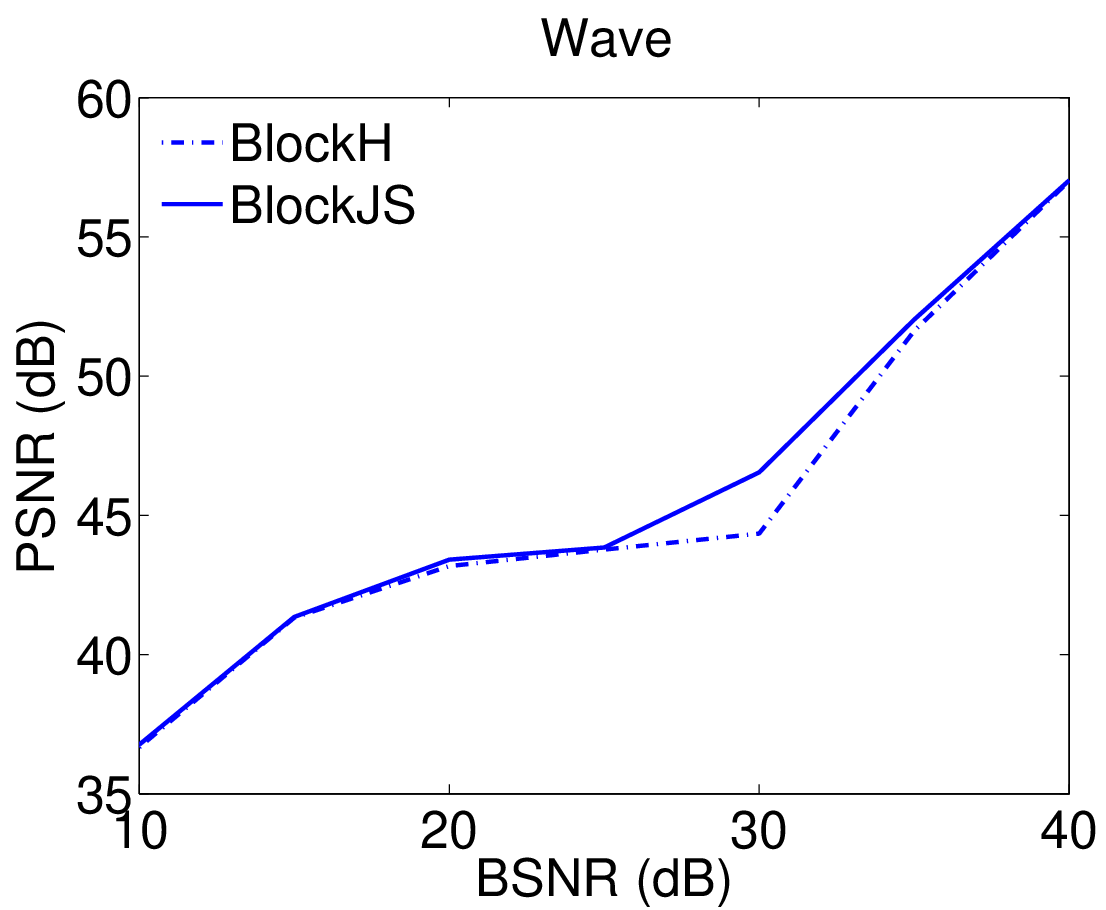} 
\end{minipage}&
\begin{minipage}{0.33\textwidth}
\includegraphics[width=\textwidth]{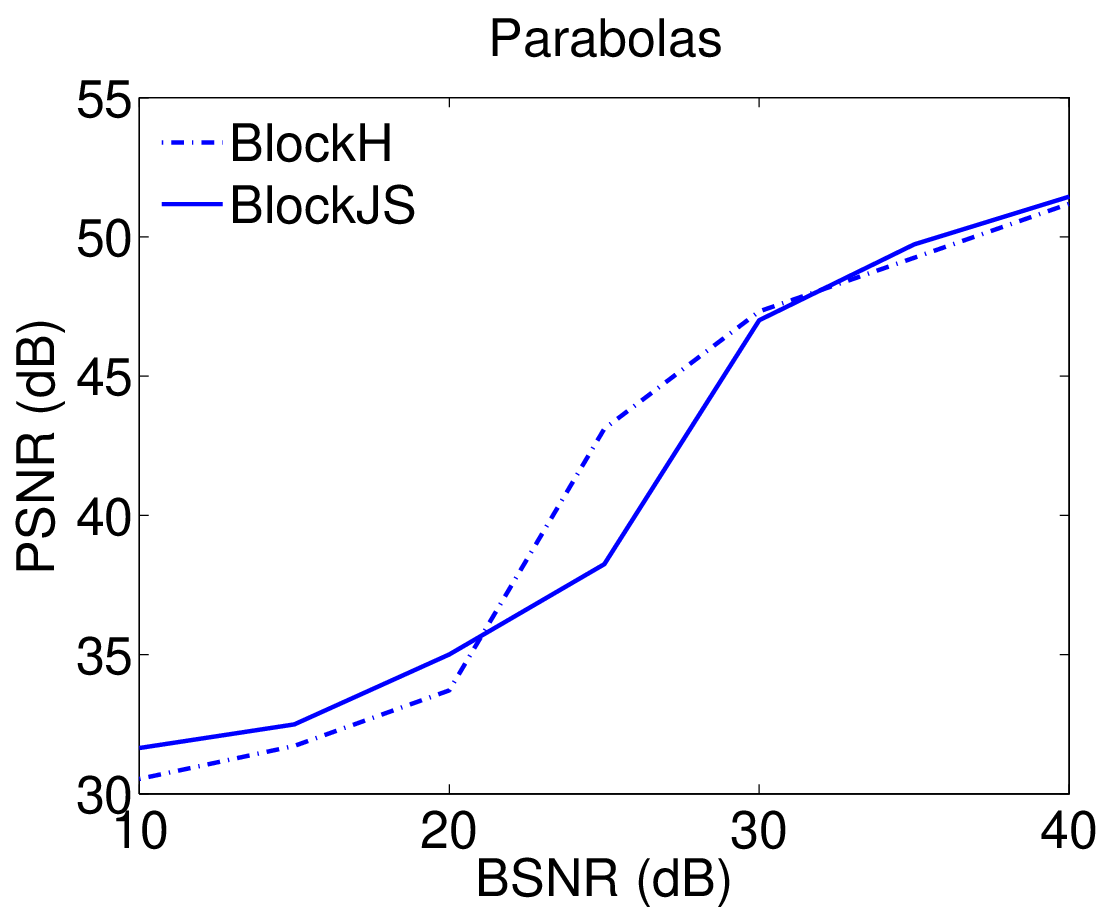} 
\end{minipage}&
\begin{minipage}{0.33\textwidth}
\includegraphics[width=\textwidth]{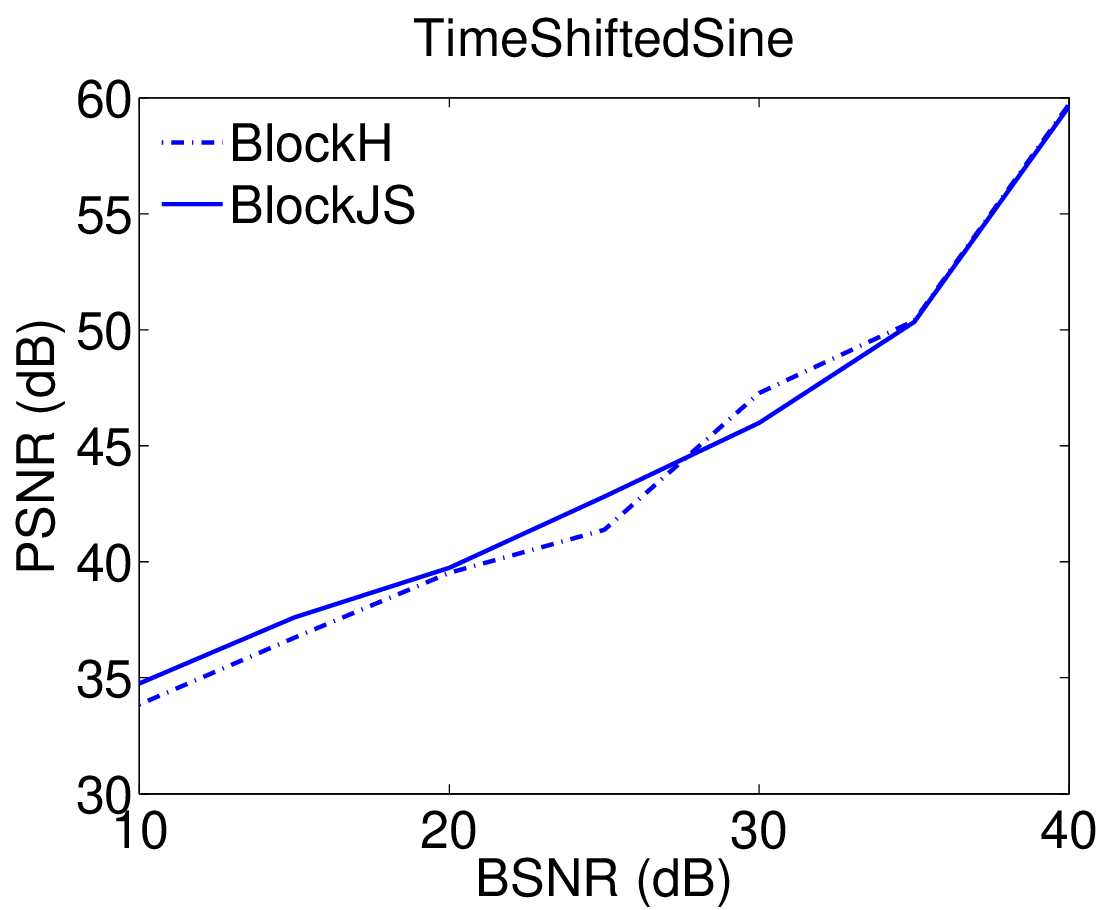}
\end{minipage}
\end{tabular}
\centerline{$d=1$}
\hspace*{-0.5cm}
\begin{tabular}{ccc}
\begin{minipage}{0.33\textwidth}
\includegraphics[width=\textwidth]{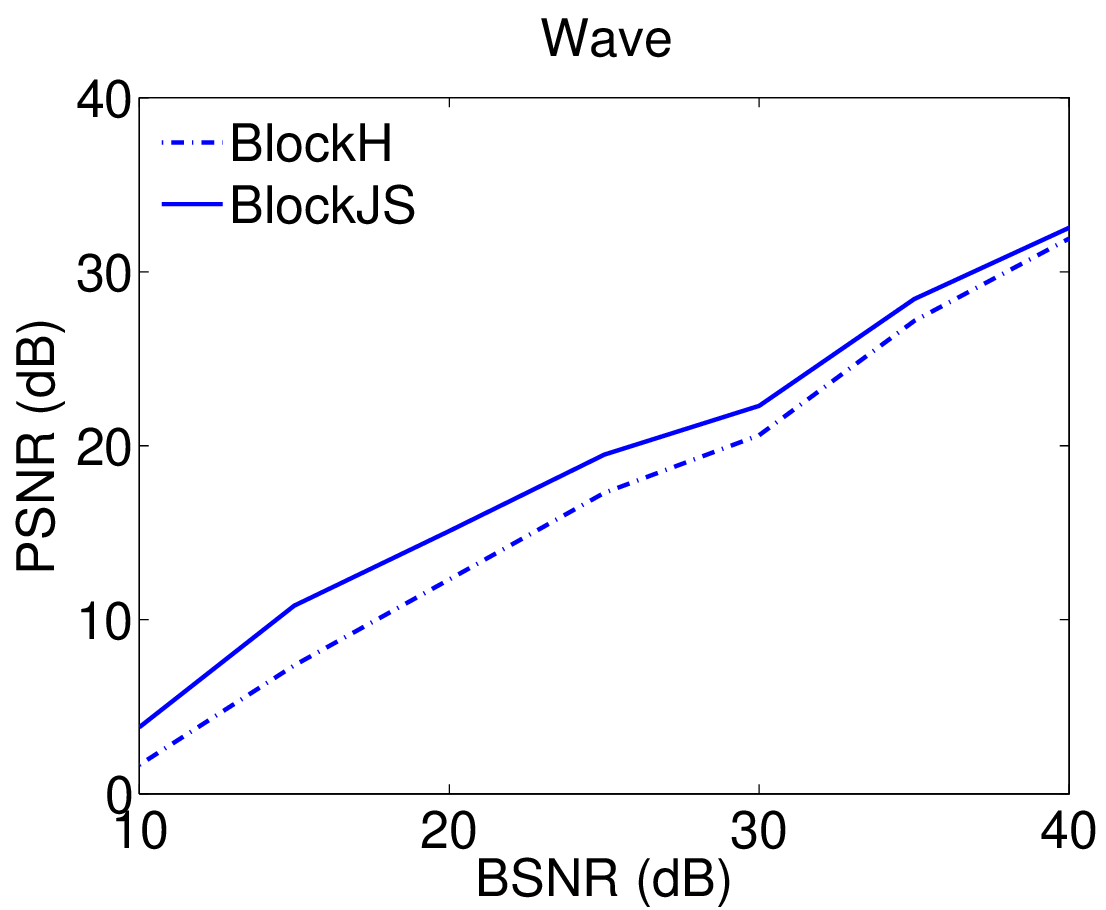} 
\end{minipage}&
\begin{minipage}{0.33\textwidth}
\includegraphics[width=\textwidth]{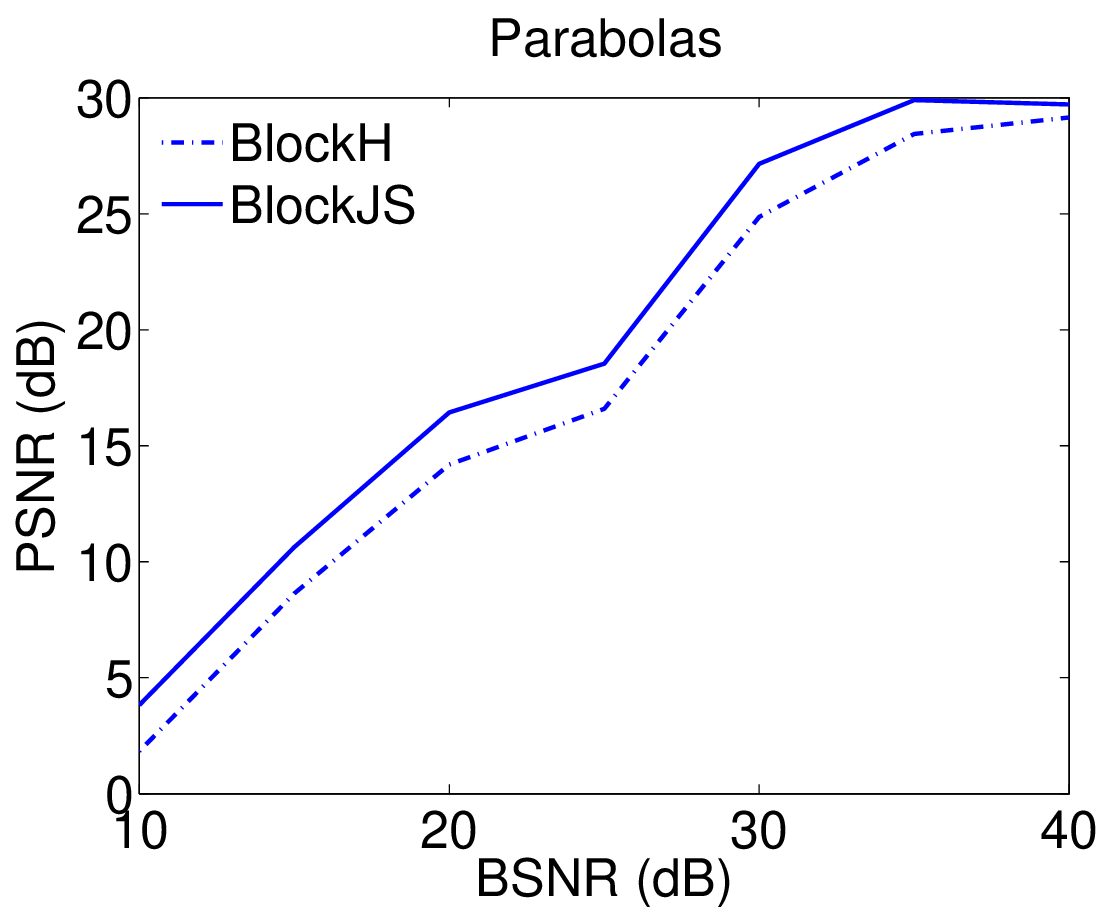} 
\end{minipage}&
\begin{minipage}{0.33\textwidth}
\includegraphics[width=\textwidth]{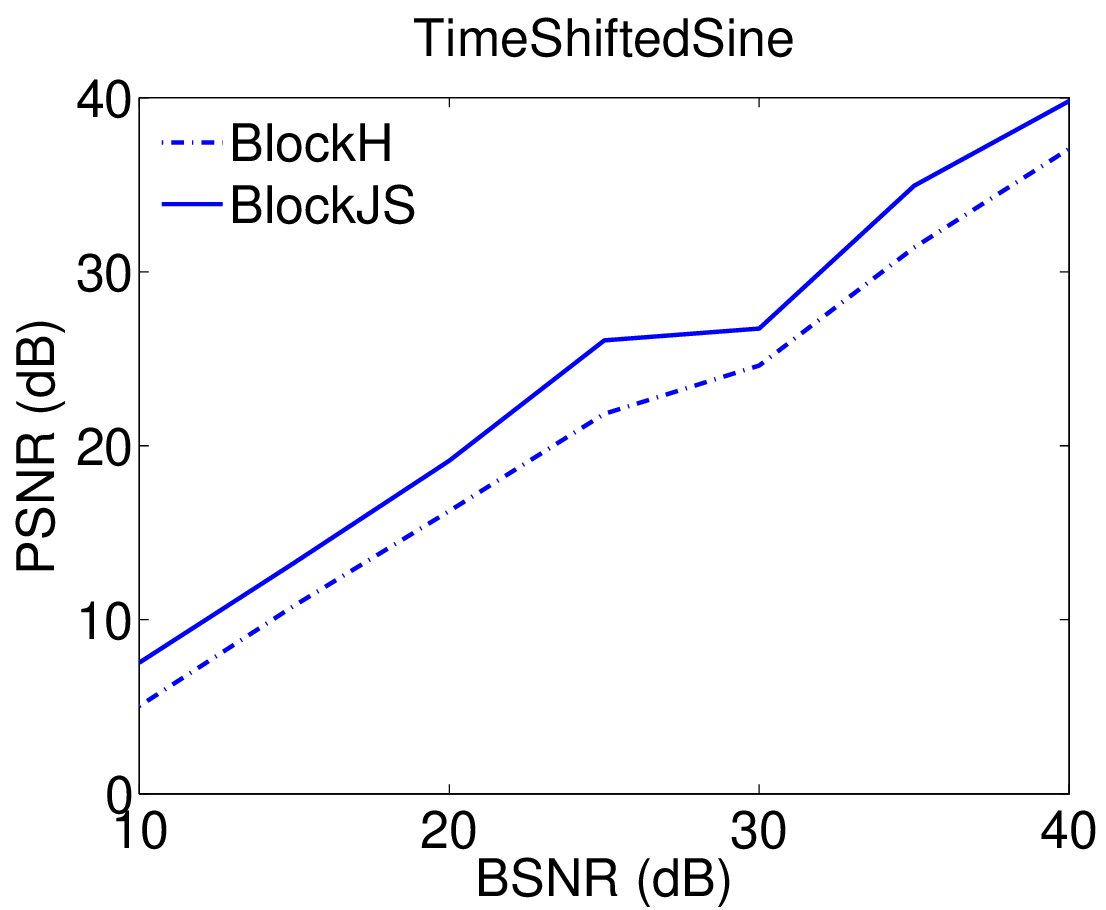}
\end{minipage}
\end{tabular}
\centerline{$d=2$}
\hspace*{-0.5cm}
\begin{tabular}{ccc}
\begin{minipage}{0.33\textwidth}
\includegraphics[width=\textwidth]{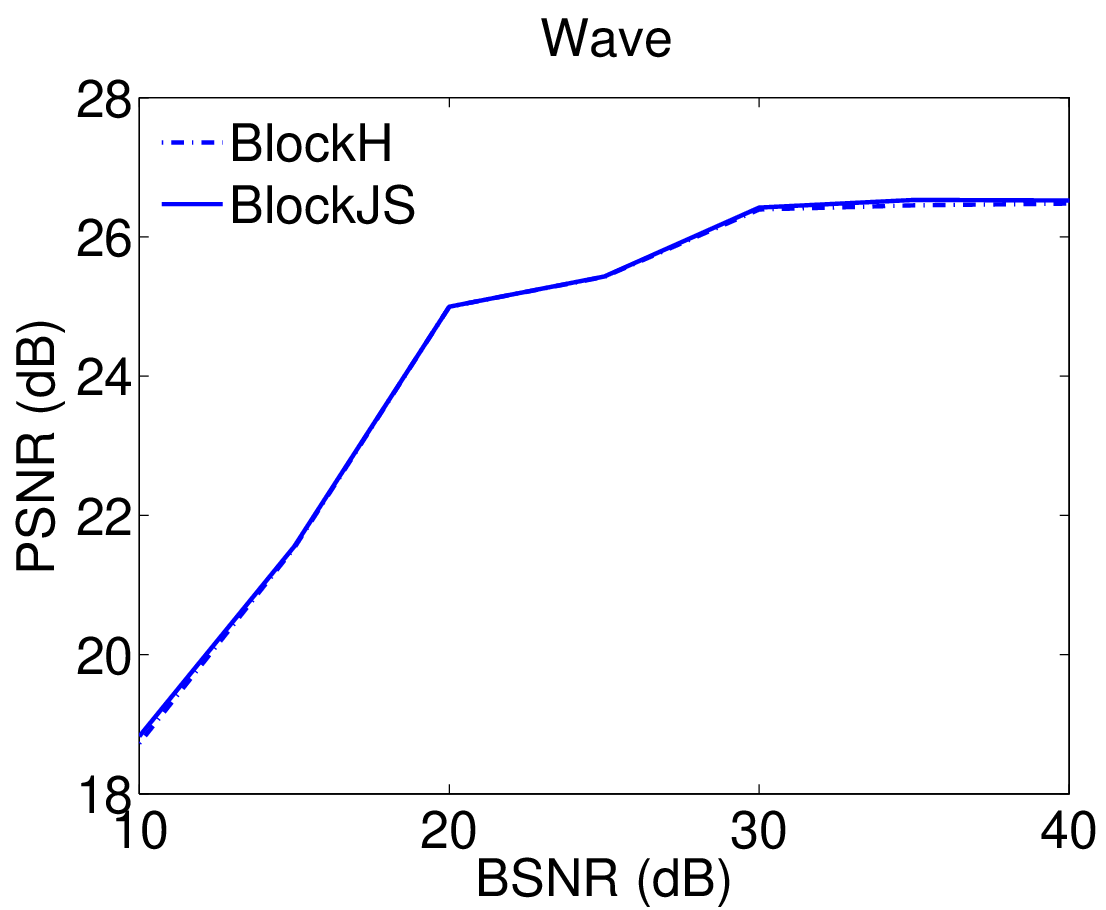} \\
\centerline{(a)}
\end{minipage}&
\begin{minipage}{0.33\textwidth}
\includegraphics[width=\textwidth]{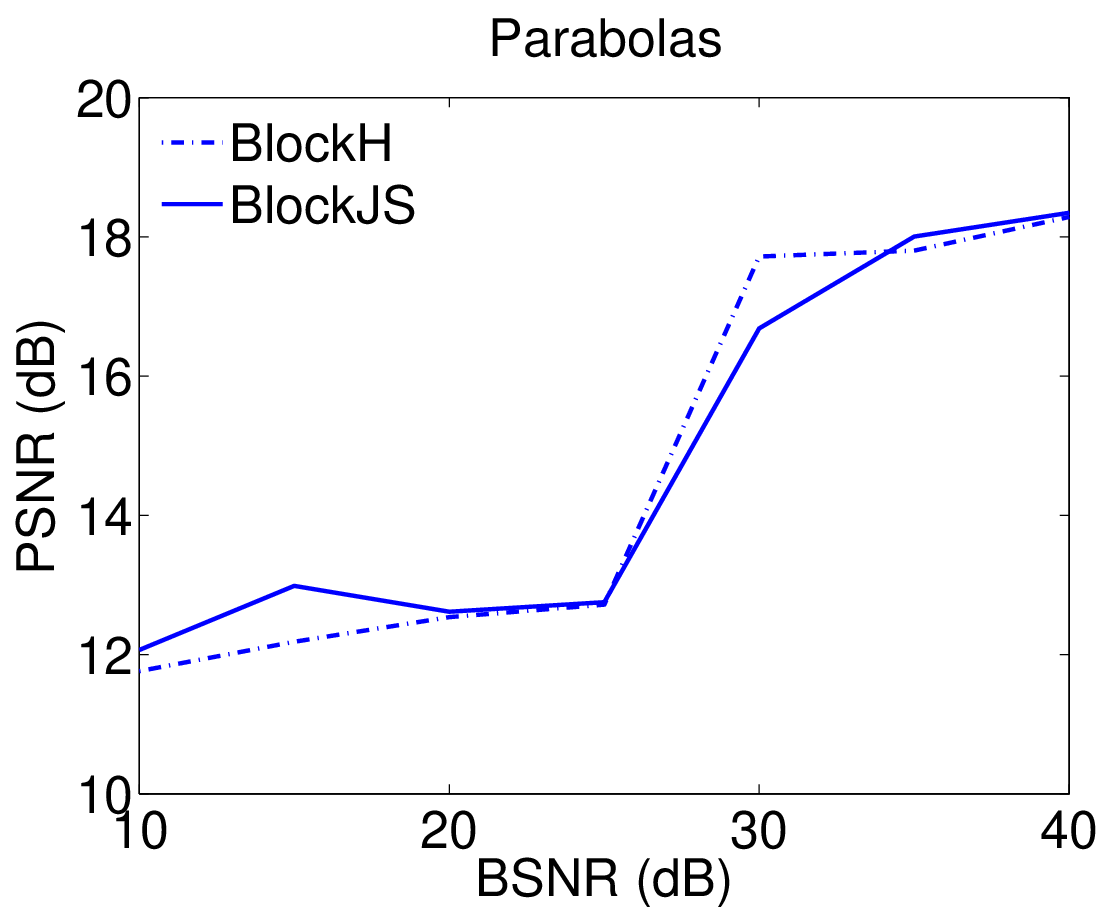} \\
\centerline{(b)}
\end{minipage}&
\begin{minipage}{0.33\textwidth}
\includegraphics[width=\textwidth]{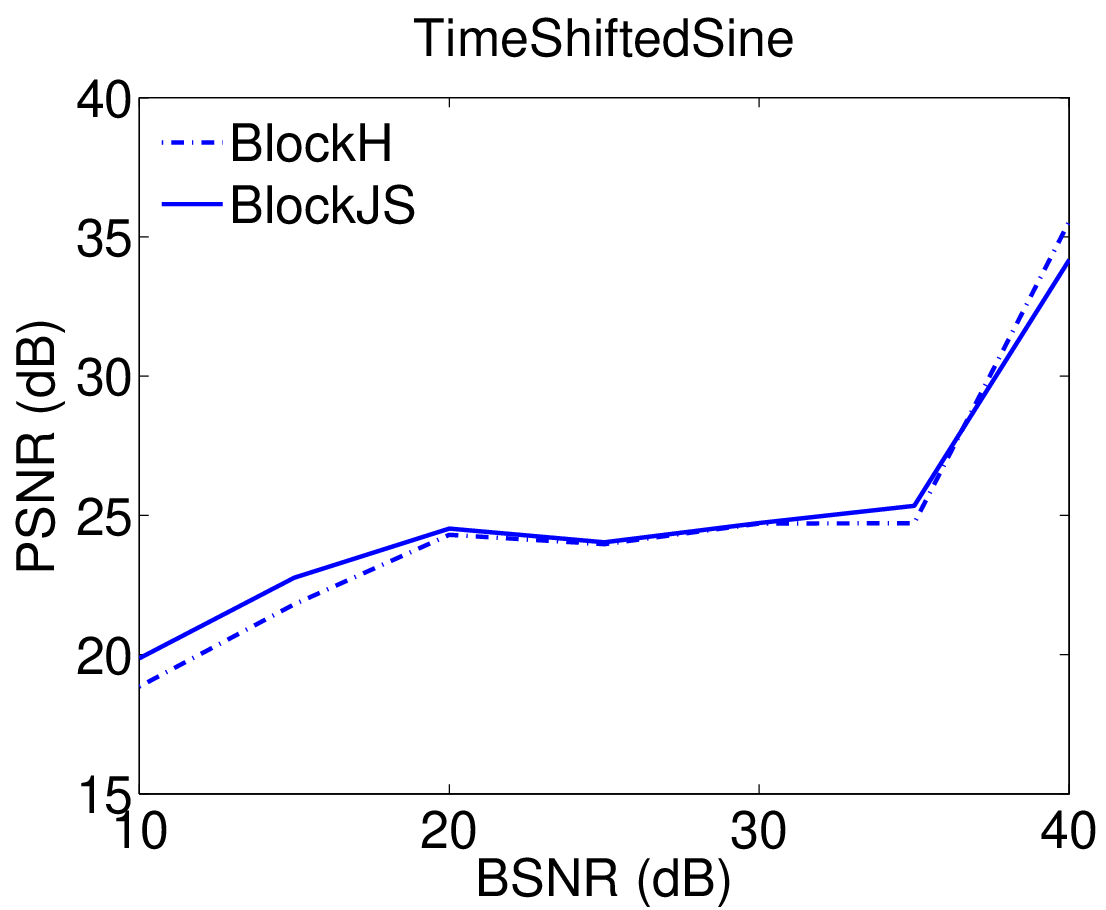}\\
\centerline{(c)}
\end{minipage}
\end{tabular}
\caption{Averaged PSNR values as a function of the input BSNR from 10 replications of the noise. (a): Wave. (b): Parabolas. (c): TimeShiftedSine. From top to bottom $d=0,1,2$.}
\label{fig:monopsnrcomp}
\end{figure} 

\subsection{Multichannel simulation}

A first point we would like to highlight is the fact that some choices of $\sigma_1,\dots,\sigma_n$ can severely impact the performance of the estimators. To illustrate this, we show in \textsc{Fig}~\ref{fig:sigv} an example of first derivative estimates obtained using BlockJS from $n=10$ channels with $T=4096$ samples and noise level corresponding to BSNR$=25$ dB, for $\sigma_v = v$ (dashed blue) and $\sigma_v$ randomly generated in $(0,+\infty)$ (solid blue). With $\sigma_v$ randomly generated, we can observe a significant PSNR improvement up to $6.85$ dB for the first derivative of TimeShiftedSine. Note that this improvement is marginal (about $0.60$ dB) for the most regular test signal (i.e. Wave). 
\begin{figure}
\hspace*{-0.5cm}
\begin{tabular}{ccc}
\begin{minipage}{0.33\textwidth}
\includegraphics[width=\textwidth]{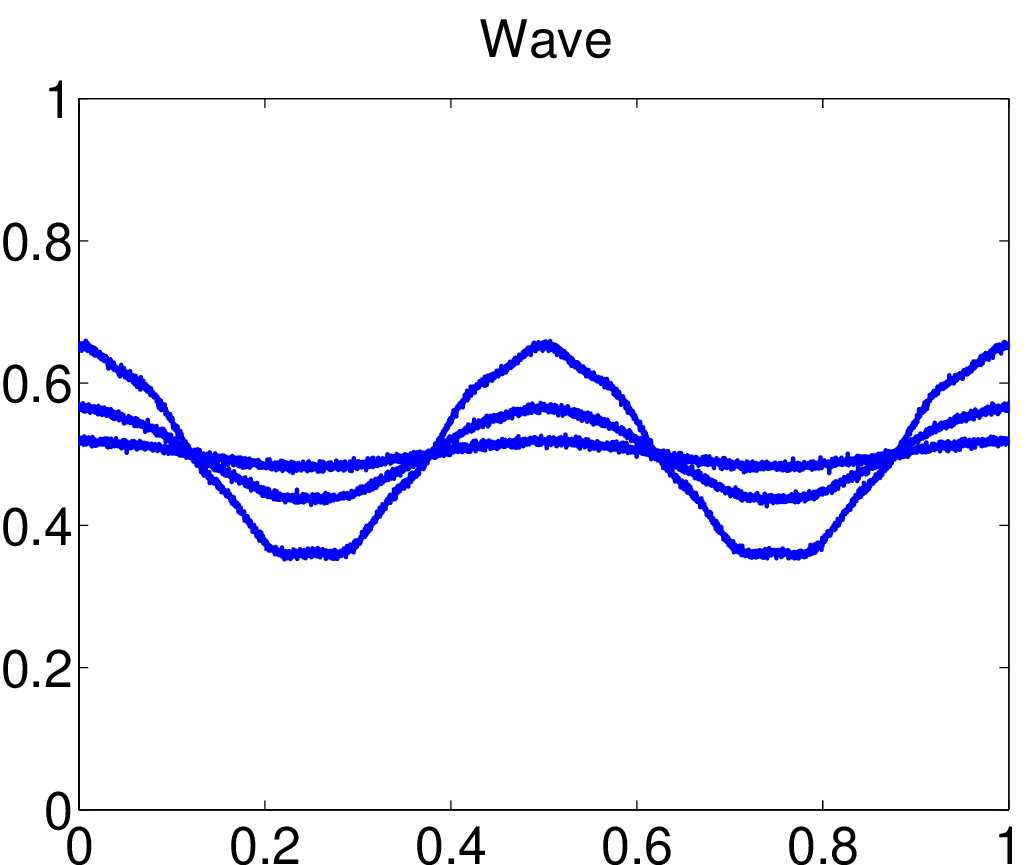}\\
\centerline{(a)}
\end{minipage}&
\begin{minipage}{0.33\textwidth}
\includegraphics[width=\textwidth]{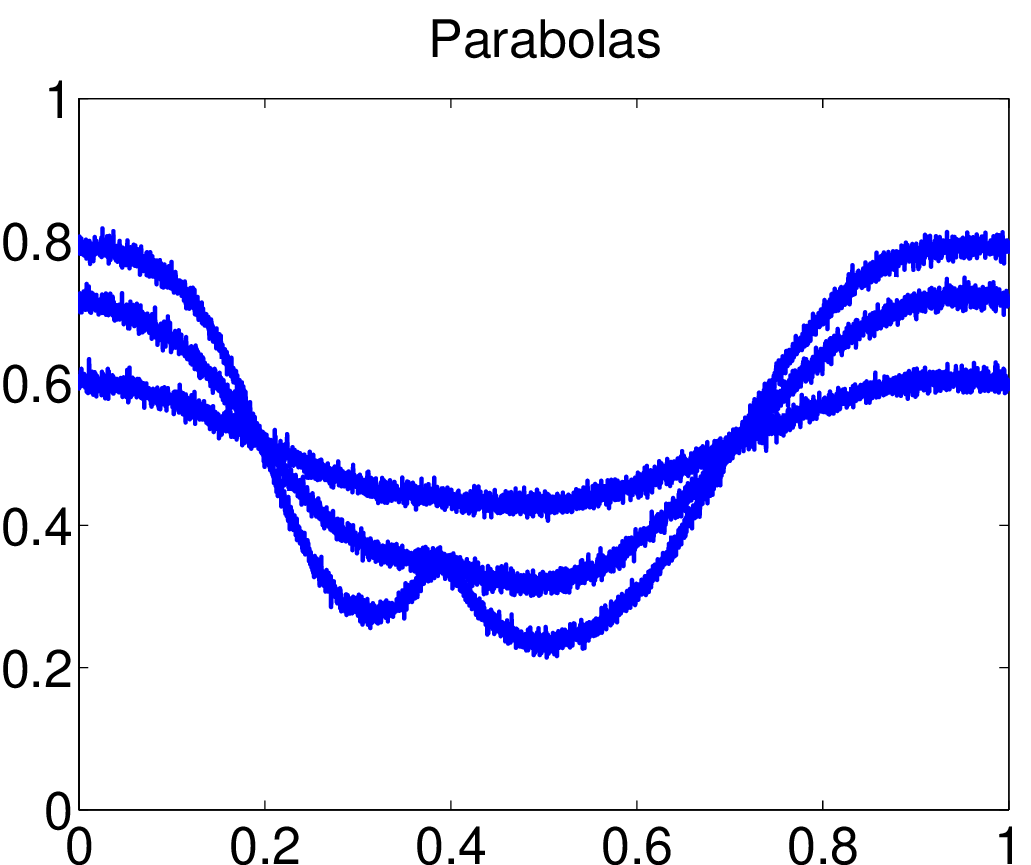}\\
\centerline{(b)}
\end{minipage}&
\begin{minipage}{0.33\textwidth}
\includegraphics[width=\textwidth]{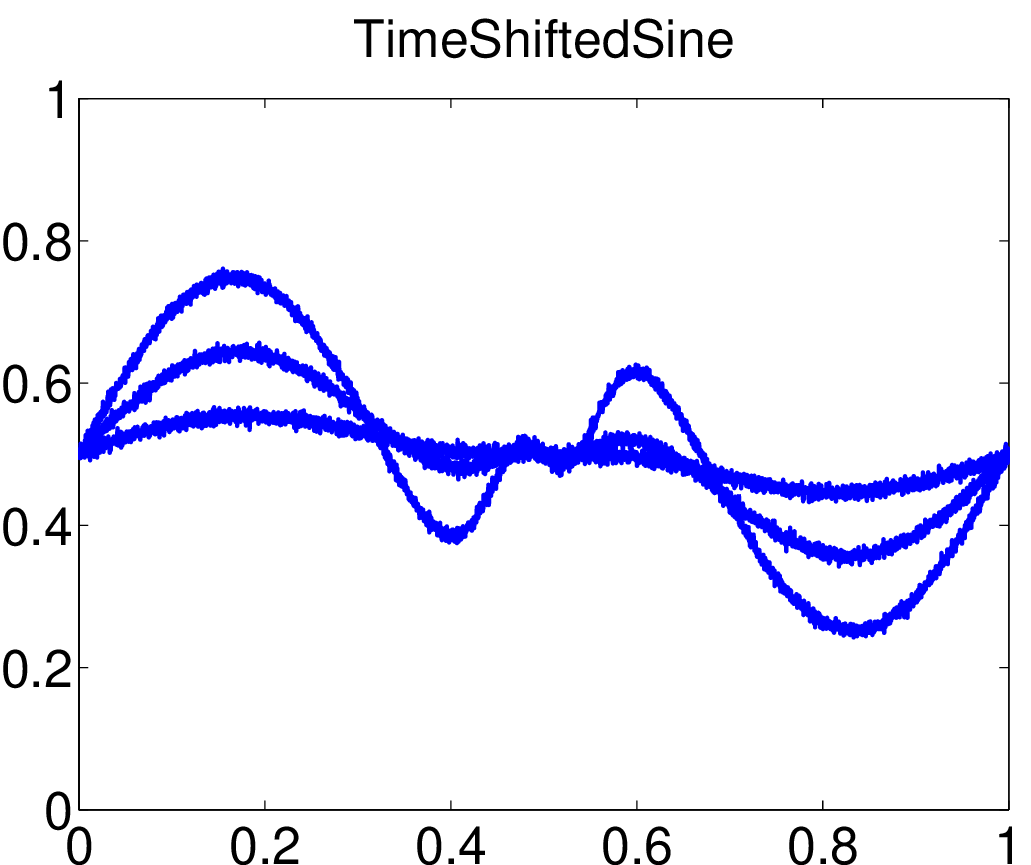}\\
\centerline{(c)}
\end{minipage}\\
\begin{minipage}{0.33\textwidth}
\includegraphics[width=\textwidth]{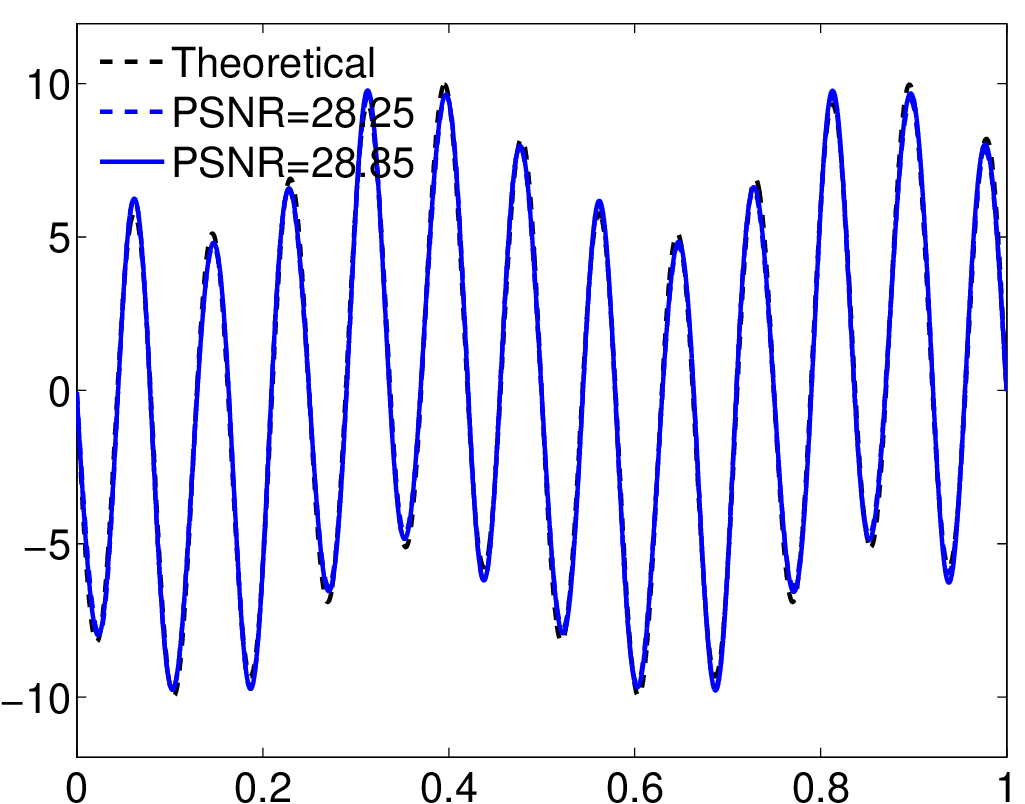} \\
\centerline{(d)}
\end{minipage}&
\begin{minipage}{0.33\textwidth}
\includegraphics[width=\textwidth]{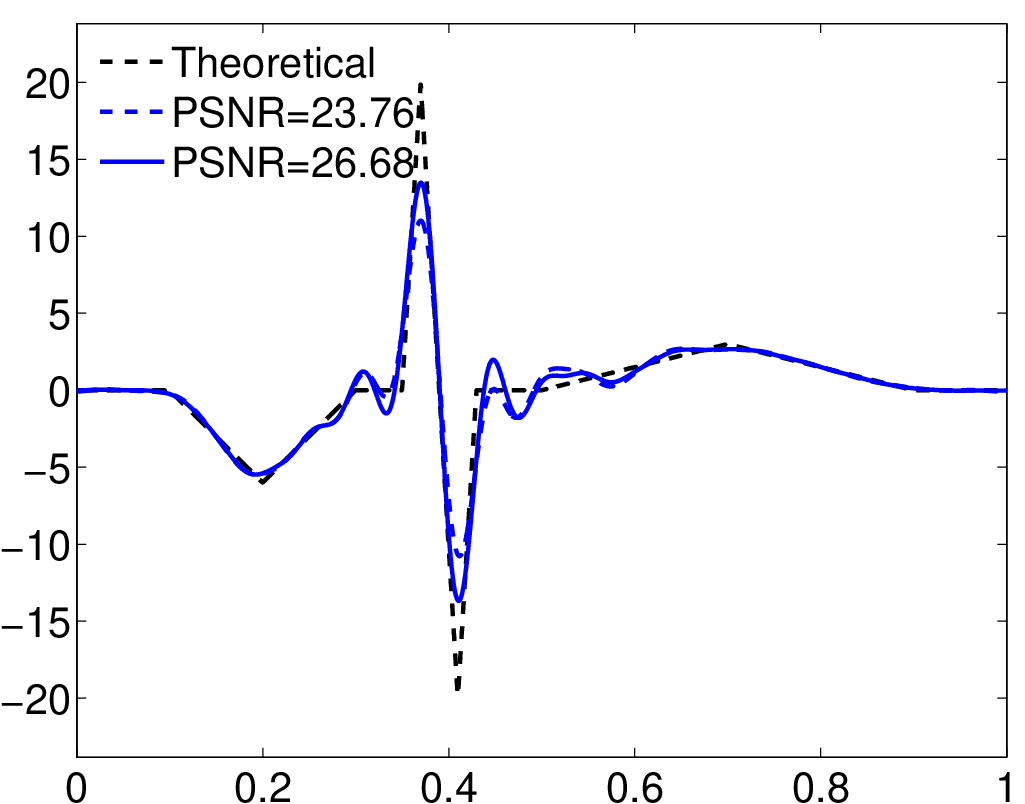} \\
\centerline{(e)}
\end{minipage}&
\begin{minipage}{0.33\textwidth}
\includegraphics[width=\textwidth]{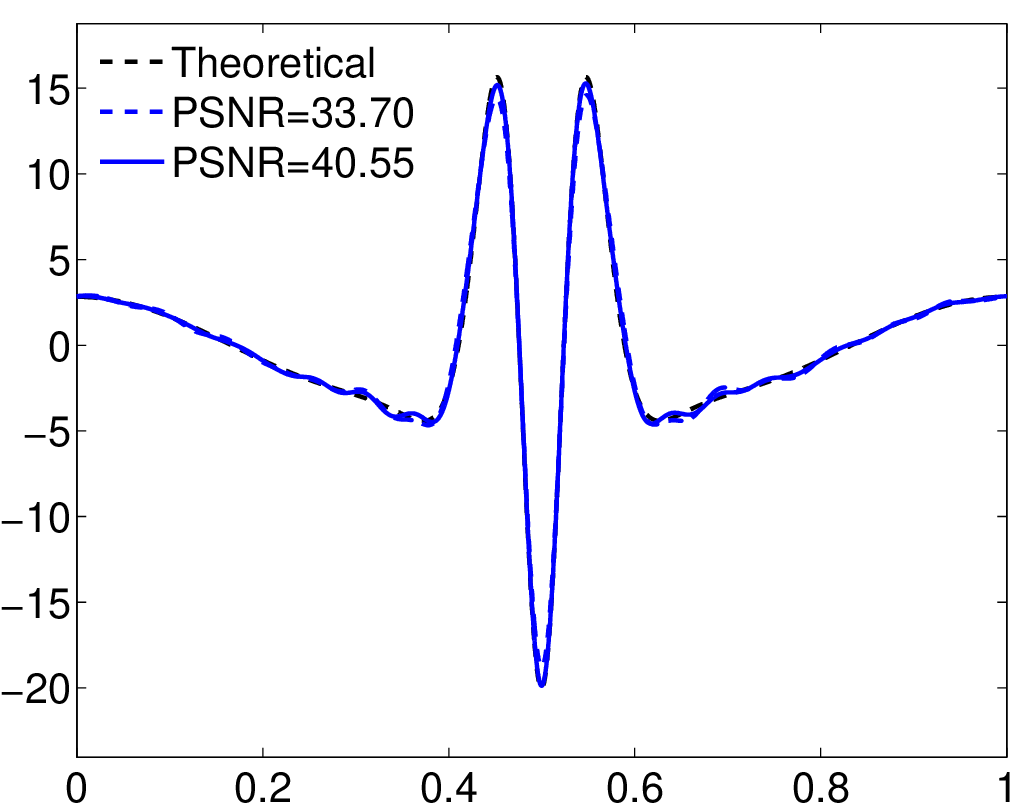} \\
\centerline{(f)}
\end{minipage}
\end{tabular}

\begin{tabular}{ccc}
\begin{minipage}{0.465\textwidth}
\includegraphics[width=\textwidth]{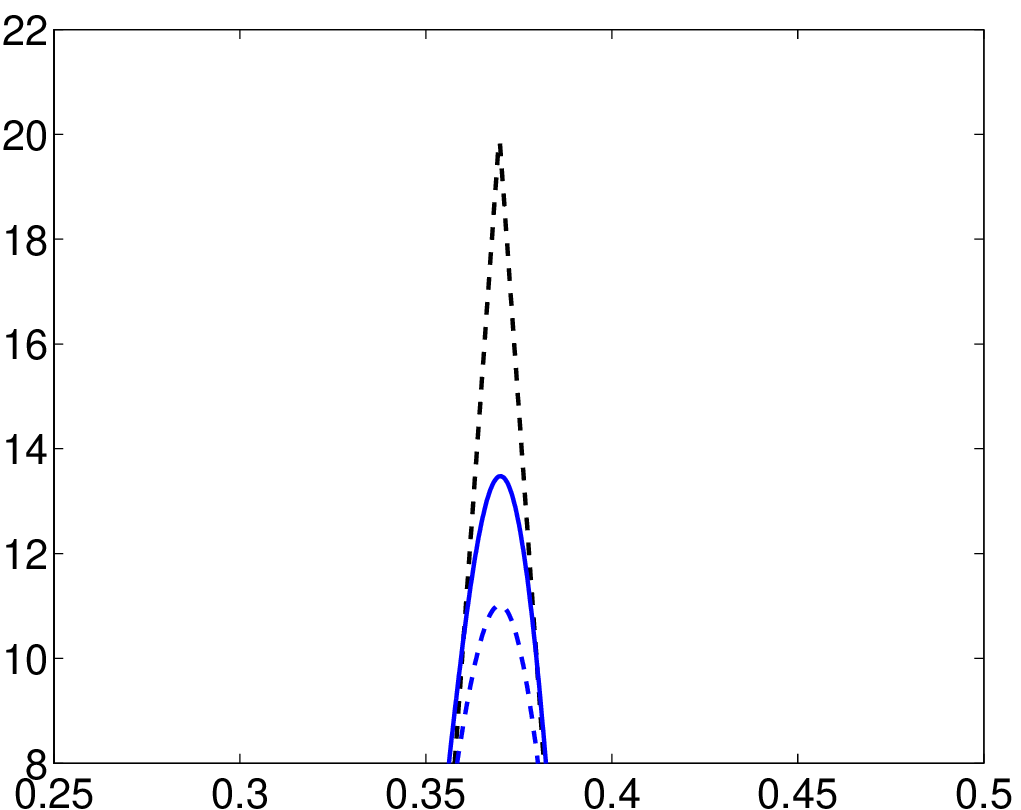} \\
\centerline{(g)}
\end{minipage}&
\begin{minipage}{0.465\textwidth}
\includegraphics[width=\textwidth]{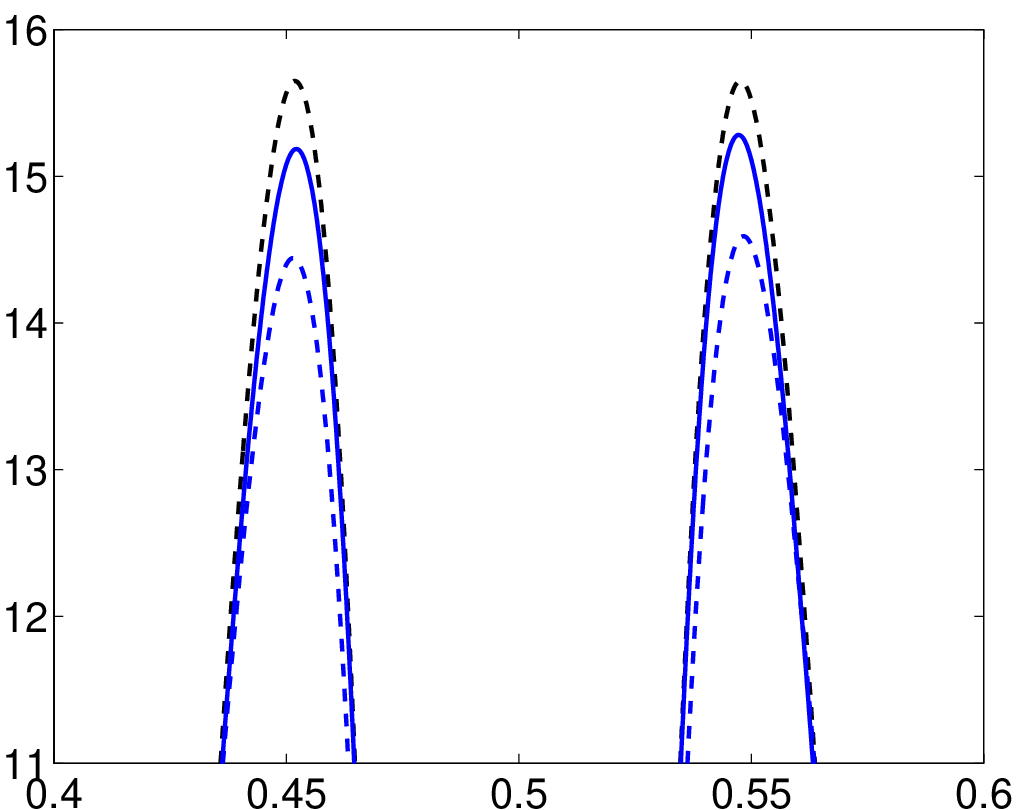} \\
\centerline{(h)}
\end{minipage}
\end{tabular}

\caption{Original functions (dashed black) and the estimate for $\sigma_v=v$ (dashed blue) and $\sigma_v$ randomly generated (solid blue) with $n=10$ channels. (a)-(c): noisy blurred observations (3 channels out of 10 shown). (d)-(f) BlockJS estimates of the first derivative. Zoom on the estimates (g): Parabolas, (h): TimeShiftedSine.}
\label{fig:sigv}
\end{figure}

\begin{table}[htp!]
\hspace*{-1.5cm}
\begin{tabular}{lc|c|c|c|c|c|c|c|c|c|c|c|c|c|}
\multicolumn{13}{c}{BSNR$=40$ dB}\\ \cline{2-13}
 & \multicolumn{4}{|c}{$d=0$} & \multicolumn{4}{|c}{$d=1$}& \multicolumn{4}{|c|}{$d=2$} \\ \cline{2-13}
  \hline
\multicolumn{1}{||l|}{$n$}      &  $10$ &  $20$ &  $50$ & $100$   &    $10$ &  $20$ &  $50$ &  $100$&    $10$ &  $20$ &  $50$ &  $100$\\
   \hline\hline 
 \multicolumn{13}{||c|}{\textit{Wave}}\\\hline
 \multicolumn{1}{||l|}{BlockJS} &$57.42$ &$66.40$& $66.83$& $74.62$&$42.64$ &$43.58$& $43.94$& $50.10$&$22.04$ &$30.34$& $33.88$& $36.69$\\
 \hline  
 \multicolumn{1}{||l|}{BlockH}  &$57.34$ &$66.31$& $66.78$& $74.72$&$42.43$ &$43.57$& $43.07$& $50.07$&$22.66$ &$29.14$& $33.78$& $36.65$\\
 \hline 
 \multicolumn{1}{||l|}{TermJS}  &$52.51$ &$61.94$& $64.86$& $73.57$&$40.64$ &$41.56$& $33.90$& $49.30$&$19.17$ &$29.57$& $30.87$& $33.20$ \\
 \hline  
 \multicolumn{1}{||l|}{TermH}   &$50.97$ &$52.72$& $55.69$& $74.73$&$31.39$ &$33.92$& $37.20$& $39.48$&$17.17$ &$28.82$& $33.89$& $35.61$\\
 \hline 
 \multicolumn{13}{||c|}{\textit{TimeShiftedSine}}\\\hline
 \multicolumn{1}{||l|}{BlockJS} &$65.11$ &$65.58$& $68.47$& $71.17$&$42.16$ &$46.25$& $46.85$& $49.53$&$41.09$ &$42.67$& $43.07$& $46.35$\\
 \hline  
 \multicolumn{1}{||l|}{BlockH}  &$62.11$ &$62.29$& $62.71$& $70.43$&$41.97$ &$45.57$& $45.75$& $49.35$&$39.20$ &$40.55$& $41.97$& $42.00$\\
 \hline 	
 \multicolumn{1}{||l|}{TermJS}  &$64.01$ &$64.73$& $66.13$& $68.21$&$40.96$ &$42.13$& $43.63$& $43.98$&$38.57$ &$41.17$& $41.73$& $45.15$\\
 \hline  
 \multicolumn{1}{||l|}{TermH}   &$61.84$ &$62.05$& $67.12$& $68.80$&$41.22$ &$42.12$& $43.32$& $44.69$&$39.39$ &$39.81$& $39.24$& $43.04$ \\
 \hline 
 \multicolumn{13}{||c|}{\textit{Parabolas}}\\\hline
 \multicolumn{1}{||l|}{BlockJS} &$56.18$ &$57.42$& $58.10$& $58.40$&$29.66$ &$29.76$& $31.04$& $31.40$&$20.63$&$21.88$& $21.93$& $21.90$\\
 \hline  
 \multicolumn{1}{||l|}{BlockH}  &$55.92$ &$57.25$& $57.70$& $58.10$&$29.69$ &$29.67$& $29.91$& $29.94$&$20.74$&$21.09$& $21.99$& $21.73$\\
 \hline 
 \multicolumn{1}{||l|}{TermJS}  &$54.33$ &$57.05$& $57.96$& $58.24$&$29.21$ &$29.38$& $29.65$& $29.79$&$20.57$&$20.93$& $20.91$& $21.51$\\
 \hline  
 \multicolumn{1}{||l|}{TermH}   &$54.59$ &$56.96$& $57.88$& $58.10$&$29.60$ &$29.60$& $29.90$& $29.94$&$20.51$&$20.96$& $20.84$& $21.49$\\
 \hline\\
\end{tabular}

\hspace*{-1.5cm}
\begin{tabular}{lc|c|c|c|c|c|c|c|c|c|c|c|c|c|}
\multicolumn{13}{c}{BSNR$=25$ dB}\\ \cline{2-13}
 & \multicolumn{4}{|c}{$d=0$} & \multicolumn{4}{|c}{$d=1$}& \multicolumn{4}{|c|}{$d=2$} \\ \cline{2-13}
  \hline
\multicolumn{1}{||l|}{$n$}      &  $10$ &  $20$ &  $50$ & $100$&    $10$ &  $20$ &  $50$ &  $100$&    $10$ &  $20$ &  $50$ &  $100$\\
   \hline\hline 
 \multicolumn{13}{||c|}{\textit{Wave}}\\\hline
 \multicolumn{1}{||l|}{BlockJS} &$44.04$ &$51.93$& $52.47$& $59.76$&$30.14$ &$30.90$& $30.90$& $35.93$&$26.83$&$26.83$&$26.98$&$27.00$\\
 \hline  
 \multicolumn{1}{||l|}{BlockH}  &$42.12$ &$51.35$& $51.82$& $59.73$&$28.69$ &$28.69$& $28.72$& $35.26$&$26.89$&$26.85$&$26.98$&$26.48$\\
 \hline 
 \multicolumn{1}{||l|}{TermJS}  &$41.67$ &$48.10$& $49.44$& $52.59$&$28.49$ &$28.69$& $28.72$& $28.71$&$25.25$&$26.62$&$26.78$&$26.94$\\
 \hline  
 \multicolumn{1}{||l|}{TermH}   &$40.95$ &$49.03$& $50.22$& $55.04$&$28.69$ &$28.69$& $28.72$& $28.72$&$25.28$&$26.85$&$26.98$&$26.86$\\
 \hline 
 \multicolumn{13}{||c|}{\textit{TimeShiftedSine}}\\\hline
 \multicolumn{1}{||l|}{BlockJS} &$51.85$ &$52.33$& $55.66$& $60.49$&$39.48$ &$41.46$& $41.88$& $41.92$ &$27.39$ &$28.72$& $29.05$& $35.95$\\
 \hline  
 \multicolumn{1}{||l|}{BlockH}  &$52.93$ &$51.51$& $55.82$& $60.35$&$38.68$ &$41.24$& $41.87$& $41.84$&$26.68$ &$29.36$& $29.54$& $35.15$\\
 \hline 
 \multicolumn{1}{||l|}{TermJS}  &$47.19$ &$47.83$& $54.45$& $56.63$&$29.46$ &$41.34$& $41.85$& $41.79$&$23.66$ &$25.84$& $25.95$& $27.58$\\
 \hline  
 \multicolumn{1}{||l|}{TermH}   &$47.54$ &$47.47$& $54.44$& $59.63$&$31.42$ &$41.03$& $40.75$& $41.79$&$23.66$ &$25.69$& $25.91$& $30.67$\\
 \hline 
 \multicolumn{13}{||c|}{\textit{Parabolas}}\\\hline
 \multicolumn{1}{||l|}{BlockJS} &$47.81$ &$49.88$& $52.90$& $54.40$&$25.52$ &$26.11$& $28.74$& $29.57$&$17.48$ &$18.24$& $18.96$& $20.62$\\
 \hline  
 \multicolumn{1}{||l|}{BlockH}  &$47.74$ &$49.23$& $52.00$& $53.62$&$25.43$ &$25.16$& $29.55$& $29.69$&$17.08$ &$18.92$& $19.00$& $20.63$\\
 \hline 
 \multicolumn{1}{||l|}{TermJS}  &$44.52$ &$49.80$& $50.80$& $52.79$&$24.84$ &$25.78$& $25.71$& $25.73$&$16.39$ &$18.58$& $19.00$& $20.57$\\
 \hline  
 \multicolumn{1}{||l|}{TermH}   &$43.74$ &$48.84$& $51.35$& $53.24$&$24.86$ &$25.17$& $25.10$& $27.98$&$16.49$ &$18.64$& $18.95$& $20.34$\\
 \hline\\
\end{tabular}

\hspace*{-1.5cm}
\begin{tabular}{lc|c|c|c|c|c|c|c|c|c|c|c|c|c|}
\multicolumn{13}{c}{BSNR$=10$ dB}\\ \cline{2-13}
 & \multicolumn{4}{|c}{$d=0$} & \multicolumn{4}{|c}{$d=1$}& \multicolumn{4}{|c|}{$d=2$} \\ \cline{2-13}
  \hline
\multicolumn{1}{||l|}{$n$}      &  $10$ &  $20$ &  $50$ & $100$&    $10$ &  $20$ &  $50$ &  $100$&    $10$ &  $20$ &  $50$ &  $100$\\
   \hline\hline 
 \multicolumn{13}{||c|}{\textit{Wave}}\\\hline
 \multicolumn{1}{||l|}{BlockJS} &$35.82$ &$43.46$& $43.91$& $45.95$&$26.96$ &$27.34$& $28.22$& $28.45$&$19.02$ &$25.12$& $26.16$& $26.78$\\
 \hline  
 \multicolumn{1}{||l|}{BlockH}  &$34.08$ &$43.41$& $43.83$& $44.60$&$26.90$ &$27.34$& $28.19$& $28.41$&$19.21$ &$25.16$& $26.06$& $26.68$\\
 \hline 
 \multicolumn{1}{||l|}{TermJS}  &$33.30$ &$43.16$& $43.24$& $44.48$&$26.67$ &$27.25$& $28.14$& $28.36$&$18.32$ &$18.89$& $22.87$& $26.65$\\
 \hline  
 \multicolumn{1}{||l|}{TermH}   &$33.22$ &$40.16$& $39.32$& $44.27$&$26.77$ &$27.35$& $28.15$& $28.39$&$18.61$ &$19.07$& $18.95$& $26.68$\\
 \hline 
 \multicolumn{13}{||c|}{\textit{TimeShiftedSine}}\\\hline
 \multicolumn{1}{||l|}{BlockJS} &$38.69$ &$39.28$& $41.24$& $48.38$&$30.12$  &$38.85$& $39.26$& $40.19$&$22.58$ &$22.84$& $23.37$& $23.53$\\
 \hline  
 \multicolumn{1}{||l|}{BlockH}  &$38.66$ &$38.97$& $40.12$& $45.34$&$26.66$ &$38.22$& $39.22$& $39.08$&$22.58$ &$22.73$& $23.33$& $23.48$\\
 \hline 
 \multicolumn{1}{||l|}{TermJS}  &$38.41$ &$38.68$& $40.31$& $41.45$&$25.68$ &$29.24$& $36.79$& $36.68$&$22.52$ &$22.82$& $15.27$& $23.26$\\
 \hline  
 \multicolumn{1}{||l|}{TermH}   &$38.23$ &$38.44$& $39.07$& $41.46$&$26.98$&$28.10$& $35.18$& $36.98$&$22.49$ &$22.72$& $15.47$& $17.56$\\
 \hline 
 \multicolumn{13}{||c|}{\textit{Parabolas}}\\\hline
 \multicolumn{1}{||l|}{BlockJS} &$35.26$ &$37.04$& $40.64$& $44.74$&$22.03$ &$24.56$& $24.77$& $25.56$&$12.76$& $12.79$&$13.09$& $13.29$\\
 \hline  
 \multicolumn{1}{||l|}{BlockH}  &$34.12$ &$35.53$& $39.29$& $43.41$&$22.22$ &$24.54$& $24.47$& $25.00$ &$12.64$ &$12.76$& $12.78$& $12.77$\\
 \hline 
 \multicolumn{1}{||l|}{TermJS}  &$34.29$ &$35.18$& $39.38$& $41.37$&$21.70$ &$24.51$& $24.63$& $24.96$&$12.56$ &$12.76$& $12.79$& $12.79$\\
 \hline  
 \multicolumn{1}{||l|}{TermH}   &$33.27$ &$34.53$& $39.45$& $42.21$&$21.88$ &$23.98$& $24.21$& $24.86$&$12.70$ &$12.76$& $12.79$& $12.79$\\
 \hline\\
\end{tabular}
\caption{Comparison of average PSNR in decibels (dB) over 10 realizations of the noise for $d=0$, $d=1$ and $d=2$. From top to bottom BSNR$=40,25,10$ dB.}
 \label{tab:psnrcomp2}
\end{table}

We finally report a simulation study by quantitatively comparing BlockJS to the other thresholding estimators described above. For each test function, we generated $T = 4096$ equispaced samples on $[0,1]$ according to \eqref{sous} with varying number of channels ranging from $n=10$ to 100. 

\textsc{Table}~\ref{tab:psnrcomp2} summarizes the results. It shows in particular that BlockJS consistently outperforms the other methods in almost all cases in terms of PSNR. As expected and predicted by our theoretical findings, on the one hand, the performance gets better as the number of channels increases. On the other hand, it degrades with increasing noise level and/or $d$. Indeed, the derivatives estimation for BSNR$=10$ dB is rather difficult to estimate, especially for functions having highly irregular derivatives such as ``Parabolas'' (which has big jumps in its second derivative, see \textsc{Fig}.~\ref{fig:monopsnr}(d)).

\section{Conclusion and perspectives}
\label{sec:conclusion}
In this work, an adaptive wavelet block thresholding estimator was constructed to estimate one of the derivative of a function $f$ from the heteroscedastic multichannel deconvolution model. 
Under ordinary smooth assumption on $g_1,\ldots,g_n$, it was proved that it is nearly optimal in the minimax sense. The practical comparisons to state-of-the art methods have demonstrated the usefulness and the efficiency of adaptive block thresholding methods in estimating a function $f$ and its first derivatives in the functional deconvolution setting.

It would be interesting to consider the case where $g_v$ are unknown, which is the case in many practical situations.
Another interesting perspective would be to extend our results to a multidimensional setting. These aspects need further investigations that we leave for a future work. 

\section{Proofs}
\label{sec:proofs}
In the following proofs, $c$ and $C$ denote positive constants which can take different values for each mathematical term.

\subsection{Preparatory results}
In the three following results, we consider the framework of Theorem \ref{maison} and, for any  integer $j\ge j_*$ and $k\in \{1,\ldots,2^j-1\}$, we set $\alpha_{j,k} = \int_{0}^{1}f^{(d)}(t)\ophi_{j,k}(t)dt$ and $\beta_{j,k} = \int_{0}^{1}f^{(d)}(t)\opsi_{j,k}(t)dt$, the wavelet coefficients \eqref{coef} of $f^{(d)}$.

\begin{proposition}[Gaussian distribution on the wavelet coefficient estimators]\label{cooopp}
For any integer $j\ge j_*$ and $k\in \{0,\ldots,2^j-1\}$, we have
\[
 \widehat \alpha_{j,k}\sim \mathcal{N}\left(\alpha_{j,k},\epsilon^2 \frac{1}{\rho_n^2}\sum_{v=1}^n \frac{1}{(1+\sigma_v^2)^{2\delta}}\sum_{\ell\in\mathcal{D}_{j}}(2\pi \ell)^{2d} \frac{|\FT\left(\phi_{j,k}\right)(\ell)|^2}{|\FT(g_v)(\ell)|^2} \right)
\]
and
\[
 \widehat \beta_{j,k}\sim \mathcal{N}\left(\beta_{j,k},\epsilon^2 \frac{1}{\rho_n^2}\sum_{v=1}^n \frac{1}{(1+\sigma_v^2)^{2\delta}}\sum_{\ell\in\mathcal{C}_{j}}(2\pi \ell)^{2d} \frac{|\FT\left(\psi_{j,k}\right)(\ell)|^2}{|\FT(g_v)(\ell)|^2} \right).
\]
\end{proposition}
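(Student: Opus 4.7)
The plan is to start by decomposing the observation integral into a deterministic (``signal'') part and a stochastic (``noise'') part, then compute the mean and variance of $\widehat\beta_{j,k}$ (the argument for $\widehat\alpha_{j,k}$ being identical with $\phi,\mathcal D_j$ in place of $\psi,\mathcal C_j$).

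\textbf{Step 1: Decomposition.} Substituting the model \eqref{sous} into the definition of $\widehat\beta_{j,k}$, I would write
\[
\int_0^1 e^{-2\pi i\ell t}\,dY_v(t) \;=\; \FT(f\star g_v)(\ell) \;+\; \epsilon\int_0^1 e^{-2\pi i\ell t}\,dW_v(t).
\]
Using the convolution property $\FT(f\star g_v)(\ell)=\FT(f)(\ell)\FT(g_v)(\ell)$ and the differentiation property $(2\pi i\ell)^d\FT(f)(\ell)=\FT(f^{(d)})(\ell)$, the signal part in $\widehat\beta_{j,k}$ collapses (the $\FT(g_v)(\ell)$ cancels and the sum over $v$ cancels $\rho_n$) to $\sum_{\ell\in\mathcal C_j}\FT(f^{(d)})(\ell)\overline{\FT(\psi_{j,k})}(\ell)$. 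Since $\supp(\FT(\psi_{j,k}))\subset\mathcal C_j$ by \eqref{cal}, Parseval's identity in $\Lper([0,1])$ gives $\mathbb E[\widehat\beta_{j,k}]=\int_0^1 f^{(d)}(t)\opsi_{j,k}(t)dt=\beta_{j,k}$.

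\textbf{Step 2: Reduction of the noise to an Itô integral.} Setting
\[
h_{v,\ell} = \frac{1}{(1+\sigma_v^2)^{\delta}}(2\pi i\ell)^d\frac{\overline{\FT(\psi_{j,k})}(\ell)}{\FT(g_v)(\ell)},
\qquad
u_v(t)=\sum_{\ell\in\mathcal C_j}h_{v,\ell}e^{-2\pi i\ell t},
\]
Fubini yields
\[
\widehat\beta_{j,k}-\beta_{j,k} \;=\; \frac{\epsilon}{\rho_n}\sum_{v=1}^n\int_0^1 u_v(t)\,dW_v(t).
\]
Here I would verify that $u_v$ is real-valued: because $\psi_{j,k}$ and $g_v$ are real and $\mathcal C_j$ is symmetric about $0$, one checks $h_{v,-\ell}=\overline{h_{v,\ell}}$, so the Fourier coefficients of $u_v$ satisfy the Hermitian symmetry that characterizes real periodic functions. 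This is the only slightly delicate point of the proof, since conjugation of the factor $(2\pi i\ell)^d$ introduces a $(-1)^d$ that must be accounted for on both sides.

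\textbf{Step 3: Variance and Gaussianity.} Once $u_v$ is real, $\int_0^1 u_v\,dW_v$ is a centered real Gaussian random variable, and by Itô's isometry its variance equals $\int_0^1 u_v(t)^2\,dt$. Parseval applied to the Fourier series of $u_v$ gives
\[
\int_0^1 u_v(t)^2\,dt \;=\; \sum_{\ell\in\mathcal C_j}|h_{v,\ell}|^2 \;=\; \frac{1}{(1+\sigma_v^2)^{2\delta}}\sum_{\ell\in\mathcal C_j}(2\pi\ell)^{2d}\frac{|\FT(\psi_{j,k})(\ell)|^2}{|\FT(g_v)(\ell)|^2}.
\]
The $W_v$'s being mutually independent, the variances add across $v$, and the $\epsilon^2/\rho_n^2$ prefactor yields exactly the variance announced in the statement. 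Finally $\widehat\beta_{j,k}$ is a sum of independent Gaussian random variables plus a constant, hence Gaussian. The same argument applied verbatim with $\phi_{j,k}$ and $\mathcal D_j$ in place of $\psi_{j,k}$ and $\mathcal C_j$ handles $\widehat\alpha_{j,k}$. The main (and only) subtlety is the verification that $u_v$ is real, since without it one would have to interpret the stochastic integral in the complex sense and the stated real-valued normal law would not be immediate.
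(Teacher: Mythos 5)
Your proof is correct and follows the same skeleton as the paper's: substitute the model into the definition of $\widehat\beta_{j,k}$, use $\FT(f\star g_v)(\ell)=\FT(f)(\ell)\FT(g_v)(\ell)$ and $\FT(f^{(d)})(\ell)=(2\pi i\ell)^d\FT(f)(\ell)$ together with Parseval to see that the deterministic part collapses to $\beta_{j,k}$ (the $\FT(g_v)(\ell)$ cancels and $\rho_n^{-1}\sum_v(1+\sigma_v^2)^{-\delta}=1$), then compute the variance of the remaining noise term. The one place you genuinely diverge is in how the Gaussianity and the variance of the noise are justified. The paper sets $e_{\ell,v}=\int_0^1 e^{-2\pi i\ell t}dW_v(t)$ and asserts that $(e_{\ell,v})_{(\ell,v)\in\mathbb{Z}\times\{1,\ldots,n\}}$ is an i.i.d.\ $\mathcal N(0,1)$ family, then reads off the variance from the squared moduli of the coefficients. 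That assertion is actually imprecise: each $e_{\ell,v}$ for $\ell\neq 0$ is a standard \emph{complex} Gaussian and $e_{-\ell,v}=\overline{e_{\ell,v}}$, so the family is not independent over all of $\mathbb{Z}$; the paper's variance formula nevertheless comes out right because the $e_{\ell,v}$ are uncorrelated in the Hermitian sense, $\mathbb{E}(e_{\ell,v}\overline{e}_{\ell',v'})=\delta_{\ell\ell'}\delta_{vv'}$ (this is exactly the identity the paper uses later in the proof of Proposition \ref{cop}). Your route --- repackaging the noise as $\frac{\epsilon}{\rho_n}\sum_v\int_0^1 u_v(t)\,dW_v(t)$ with a kernel $u_v$ whose Fourier coefficients satisfy $h_{v,-\ell}=\overline{h_{v,\ell}}$, so that $u_v$ is real, and then invoking It\^o's isometry plus Parseval --- sidesteps this issue entirely and makes it transparent why the limit law is a real-valued normal distribution. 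What your approach buys is rigor on precisely the point the paper glosses over; what the paper's buys is brevity, since the same covariance identity is needed anyway for the concentration inequality. Both are correct and yield the stated variance.
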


\begin{proof}
Let us prove the second point, the first one can be proved in a similar way. For any $\ell\in \mathbb{Z}$ and any $v\in\{1,\ldots,n\}$, $\FT\left( f\star g_v\right)(\ell)= \FT(f)(\ell)\FT(g_v)(\ell)$. 
Therefore, if we set
$$y_{\ell,v}=\int_{0}^{1}e^{- 2\pi i\ell t}dY_v(t), \qquad e_{\ell,v}=\int_{0}^{1}e^{- 2\pi i\ell t}dW_v(t),$$
It follows from \eqref{sous} that
\begin{equation}\label{dent}
 y_{\ell,v}=\FT(f)(\ell)\FT(g_v)(\ell)+\epsilon e_{\ell,v}.
\end{equation}

Note that, since $f$ is $1$-periodic, for any $u\in \{0,\ldots,d\}$, $f^{(u)}$ is $1$-periodic and $f^{(u)}(0)=f^{(u)}(1)$. By classical properties of the Fourier series, for any $\ell\in\mathbb{Z}$, we have $\FT(f^{(d)})(\ell)=(2\pi i\ell)^d\FT(f)(\ell)$. The Parseval theorem gives
\begin{align*}
\beta_{j,k}& = \int_{0}^{1}f^{(d)}(t)\opsi_{j,k}(t)dt=\sum_{\ell\in\mathcal{C}_{j}}\FT(f^{(d)})(\ell)\overline{\FT\left(\psi_{j,k}\right)}(\ell)\\
& = \sum_{\ell\in\mathcal{C}_{j}}(2\pi i\ell)^d\FT(f)(\ell)\overline{\FT\left(\psi_{j,k}\right)}(\ell).
\end{align*}
Using \eqref{dent}, we have
\begin{align*}
\widehat \beta_{j,k}& = \frac{1}{\rho_n}\sum_{v=1}^n \frac{1}{(1+\sigma_v^2)^{\delta}}\sum_{\ell\in\mathcal{C}_{j}}(2\pi i\ell)^d\frac{\overline{\FT\left(\psi_{j,k}\right)}(\ell)}{\FT(g_v)(\ell)}\FT(f)(\ell)\FT(g_v)(\ell)\\
& +
\epsilon \frac{1}{\rho_n}\sum_{v=1}^n \frac{1}{(1+\sigma_v^2)^{\delta}}\sum_{\ell\in\mathcal{C}_{j}}(2\pi i\ell)^d\frac{\overline{\FT\left(\psi_{j,k}\right)}(\ell)}{\FT(g_v)(\ell)}e_{\ell,v}\\
& =
\sum_{\ell\in\mathcal{C}_{j}}(2\pi i\ell)^d \FT(f)(\ell) \overline{\FT\left(\psi_{j,k}\right)}(\ell)  \\
& +
\epsilon \frac{1}{\rho_n}\sum_{v=1}^n \frac{1}{(1+\sigma_v^2)^{\delta}}\sum_{\ell\in\mathcal{C}_{j}}(2\pi i\ell)^d \frac{\overline{\FT\left(\psi_{j,k}\right)}(\ell)}{\FT(g_v)(\ell)}e_{\ell,v}\\
& =
\beta_{j,k}  +\epsilon \frac{1}{\rho_n}\sum_{v=1}^n \frac{1}{(1+\sigma_v^2)^{\delta}}\sum_{\ell\in\mathcal{C}_{j}}(2\pi i\ell)^d \frac{\overline{\FT\left(\psi_{j,k}\right)}(\ell)}{\FT(g_v)(\ell)}e_{\ell,v}.
\end{align*}

Since $\{e^{- 2\pi i\ell .}\}_{\ell\in\mathbb{Z}}$ is an orthonormal basis of $\Lper([0,1])$ and $W_1(t),\ldots,W_n(t)$ are i.i.d. standard Brownian motions, $\left(\int_{0}^{1}e^{- 2\pi i\ell t}dW_v(t)\right)_{(\ell,v)\in \mathbb{Z}\times \{1,\ldots,n\}}$ is a sequence of i.i.d. random variables with the common distribution $\mathcal{N}(0,1)$. Therefore $$\widehat \beta_{j,k}\sim \mathcal{N}\left(\beta_{j,k},\epsilon^2 \frac{1}{\rho_n^2}\sum_{v=1}^n \frac{1}{(1+\sigma_v^2)^{2\delta}}\sum_{\ell\in\mathcal{C}_{j}}(2\pi \ell)^{2d} \frac{|\FT\left(\psi_{j,k}\right)(\ell)|^2}{|\FT(g_v)(\ell)|^2} \right).$$
Proposition \ref{cooopp} is proved.
\end{proof}

\begin{proposition}[Moment inequalities]\label{copp} \ 
\begin{itemize}
\item There exists a constant $C>0$ such that, for any integer $j\ge j_*$ and $k\in \{0,\ldots,2^j-1\}$,  $$\mathbb{E}\left( |\widehat \alpha_{j_1,k}-\alpha_{j_1,k}|^2\right)\le C \epsilon^2 2^{2 (\delta+d) j_1}\rho_n^{-1},$$
\item There exists a constant $C>0$ such that, for any integer $j\ge j_*$ and $k\in \{0,\ldots,2^j-1\}$, $$\mathbb{E}\left( |\widehat \beta_{j,k}-\beta_{j,k}|^4\right)\le C \epsilon^4 2^{4 (\delta+d) j}\rho_n^{-2}.$$
\end{itemize}
\end{proposition}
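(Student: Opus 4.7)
The proof hinges on the distributional identity from Proposition~\ref{cooopp}, which tells us that $\widehat{\alpha}_{j_1,k}-\alpha_{j_1,k}$ and $\widehat{\beta}_{j,k}-\beta_{j,k}$ are centered Gaussian random variables. Consequently, the second and fourth central moments reduce to the variance and three times its square, respectively, so the entire task boils down to a deterministic upper bound on the explicit variance formulas supplied by Proposition~\ref{cooopp}.

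First I would upper-bound the variance of $\widehat{\beta}_{j,k}$ by invoking the lower bound in assumption \eqref{cond} to replace $|\FT(g_v)(\ell)|^{-2}$ by $c_g^{-2}(1+\sigma_v^{2}\ell^{2})^{\delta}$. Next I would use the elementary inequality
$$
1+\sigma_v^{2}\ell^{2} \;\le\; (1+\sigma_v^{2})(1+\ell^{2}),
$$
together with the fact that for $\ell\in\mathcal{C}_j$ the support constraint \eqref{cal} forces $|\ell|\le (8\pi/3)\,2^{j}$, hence $(1+\ell^{2})^{\delta}\le C\,2^{2\delta j}$. Similarly $(2\pi\ell)^{2d}\le C\,2^{2dj}$. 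This strips the inner sum down to a constant times $2^{2(\delta+d)j}(1+\sigma_v^{2})^{\delta}\sum_{\ell\in\mathcal{C}_j}|\FT(\psi_{j,k})(\ell)|^{2}$.

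Then Parseval's identity applied to the periodized Meyer wavelet gives $\sum_{\ell\in\mathbb{Z}}|\FT(\psi_{j,k})(\ell)|^{2}=\|\psi_{j,k}\|_{L^{2}[0,1]}^{2}=1$, because these wavelets are elements of an orthonormal basis of $\Lper([0,1])$. Plugging back into the variance formula and using the definition $\rho_n=\sum_{v=1}^{n}(1+\sigma_v^{2})^{-\delta}$ collapses the channel sum and yields
$$
\mathrm{Var}(\widehat{\beta}_{j,k}) \;\le\; C\,\epsilon^{2}\,2^{2(\delta+d)j}\,\frac{1}{\rho_n^{2}}\sum_{v=1}^{n}\frac{(1+\sigma_v^{2})^{\delta}}{(1+\sigma_v^{2})^{2\delta}} \;=\; C\,\epsilon^{2}\,2^{2(\delta+d)j}\,\rho_n^{-1}.
$$
The fourth-moment estimate then follows instantly from the Gaussian identity $\mathbb{E}|X|^{4}=3(\mathrm{Var}\,X)^{2}$ for $X\sim\mathcal{N}(0,\mathrm{Var}\,X)$. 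The bound on $\widehat{\alpha}_{j_1,k}-\alpha_{j_1,k}$ is obtained by exactly the same argument with $\mathcal{D}_{j_1}$ and $\phi$ in place of $\mathcal{C}_j$ and $\psi$; note that even the frequency $\ell=0$ causes no trouble since the factor $(2\pi\ell)^{2d}$ vanishes when $d\ge 1$ and is simply bounded by $1$ when $d=0$, while $|\FT(g_v)(0)|\ge c_g$.

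No serious obstacle is expected. The only slightly delicate point is the factorisation step where the frequency weight $(1+\sigma_v^{2}\ell^{2})^{\delta}$ is split into a purely frequency-dependent piece $2^{2\delta j}$ and a purely channel-dependent piece $(1+\sigma_v^{2})^{\delta}$: it is this factorisation that cancels against the heteroscedastic weight $(1+\sigma_v^{2})^{-2\delta}$ and produces exactly the $\rho_n^{-1}$ gain which reflects the information pooled across the $n$ channels.
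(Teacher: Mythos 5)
Your proposal is correct and follows essentially the same route as the paper's proof: both reduce the moments to the Gaussian variance from Proposition~\ref{cooopp}, bound $(2\pi\ell)^{2d}|\FT(g_v)(\ell)|^{-2}$ on $\mathcal{C}_j$ (resp.\ $\mathcal{D}_{j_1}$) by $C(1+\sigma_v^2)^{\delta}2^{2(\delta+d)j}$ via \eqref{cond} and \eqref{cal}, and then apply Parseval together with the definition of $\rho_n$ to obtain $\mathrm{Var}\le C\epsilon^2 2^{2(\delta+d)j}\rho_n^{-1}$. Your extra remark about $\ell=0$ in $\mathcal{D}_{j_1}$ is a minor refinement the paper leaves implicit.
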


\begin{proof} 
Let us prove the second point, the first one can be proved in a similar way. Let us recall that, by Proposition \ref{cooopp}, for any $j\in \{j_1,\ldots,j_2\}$ and any $k\in \{0,\ldots,2^{j}-1\}$, we have 
\begin{equation}\label{distr}
\widehat \beta_{j,k}-\beta_{j,k}\sim\mathcal{N}\left(0,\rho_n^{-2}\sigma_{j,k}^2\right),
\end{equation}
 where
\begin{equation}\label{tabb}
\sigma_{j,k}^2=\epsilon^2\sum_{v=1}^n \frac{1}{(1+\sigma_v^2)^{2\delta}}\sum_{\ell\in\mathbb{Z}}(2\pi \ell)^{2d} \frac{|\FT\left(\psi_{j,k}\right)(\ell)|^2}{|\FT(g_v)(\ell)|^2}.
\end{equation}
Due to \eqref{cond} and \eqref{cal}, for any $v\in \{1,\ldots,n\}$, we have
\begin{align}\label{dol}
\sup_{\ell \in \mathcal{C}_j}\left(\frac{(2\pi \ell)^{2d}}{|\FT(g_v)(\ell)|^2}\right)
& \le  
C \sup_{\ell \in \mathcal{C}_j}\left((2\pi \ell)^{2d}\left(1+\sigma_v^2\ell^2\right)^{\delta} \right) \nonumber \\
& \le  
C (1+\sigma_v^2)^{\delta}\sup_{\ell \in \mathcal{C}_j}\left((2\pi \ell)^{2d}\left(1+\ell^2\right)^{\delta} \right) \nonumber \\
& \le   
C (1+\sigma_v^2)^{\delta} 2^{2(\delta+d) j}.
\end{align}
It follows from \eqref{dol} and the Parseval identity that
\begin{align}\label{sigg}
\sigma_{j,k}^2& \le  \epsilon^2\sum_{v=1}^n \frac{1}{(1+\sigma_v^2)^{2\delta}}\sup_{\ell \in \mathcal{C}_j}\left(\frac{(2\pi \ell)^{2d}}{|\FT(g_v)(\ell)|^2}\right)\sum_{\ell\in\mathcal{C}_j}|\FT\left(\psi_{j,k}\right)(\ell)|^2 \nonumber \\
& \le  
C \epsilon^22^{2(\delta+d) j} \left( \sum_{v=1}^n \frac{1}{(1+\sigma_v^2)^{\delta}}\right) \left(\sum_{\ell\in\mathcal{C}_j}|\FT\left(\psi_{j,k}\right)(\ell)|^2\right) \nonumber \\
& =  
C \epsilon^22^{2(\delta+d) j}\rho_n \int_{0}^1|\psi_{j,k}(t)|^2dt=C \epsilon^2 \rho_n2^{2(\delta+d) j}.
\end{align}
Putting \eqref{distr}, \eqref{tabb} and \eqref{sigg} together, we obtain
\[
 \mathbb{E}\left( |\widehat \beta_{j,k}-\beta_{j,k}|^4\right)\le C (\epsilon^22^{2(\delta+d) j} \rho_n \rho_n^{-2})^2 =  C \epsilon^4 2^{4 (\delta+d) j}\rho_n^{-2}.
\]
Proposition \ref{copp} is proved.
\end{proof}

\begin{proposition}[Concentration inequality]\label{cop}
There exists a constant $\lambda>0$ such that,
for any $j\in \{j_1,\ldots,j_2\}$, any $K\in \mathcal{A}_j$ and $n$ large enough,
\begin{equation*}
\mathbb{P} \left( \left(\sum_{k\in B_{j,K}}|\widehat \beta_{j,k}-\beta_{j,k}|^2\right)^{1/2}  \ge \lambda  2^{(\delta+d)j}(\log \rho_n/\rho_n)^{1/2} \right)  \le \rho_n^{-2}.
\end{equation*}
\end{proposition}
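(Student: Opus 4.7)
The plan is to apply the Cirelson-Ibragimov-Sudakov Gaussian concentration inequality to the $\ell^2$-norm of the centered Gaussian vector $Z = (\widehat\beta_{j,k} - \beta_{j,k})_{k \in B_{j,K}}$, whose joint Gaussianity follows directly from Proposition~\ref{cooopp}. Writing $\Sigma$ for its covariance matrix and using that $z \mapsto \|z\|_2$ is $1$-Lipschitz, Cirelson-Borell yields
\begin{equation*}
\mathbb{P}(\|Z\|_2 \ge \mathbb{E}\|Z\|_2 + t) \le \exp\bigl(-t^2/(2\|\Sigma\|_{\mathrm{op}})\bigr),
\end{equation*}
reducing the task to bounding the mean $\mathbb{E}\|Z\|_2$ and the operator norm $\|\Sigma\|_{\mathrm{op}}$.

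The mean is immediate: Jensen's inequality gives $\mathbb{E}\|Z\|_2 \le \sqrt{\mathrm{tr}(\Sigma)} = \bigl(\sum_{k \in B_{j,K}}\mathrm{Var}(\widehat\beta_{j,k})\bigr)^{1/2}$; combined with the pointwise bound \eqref{sigg} and $L = \card(B_{j,K}) \le \log \rho_n$, this yields $\mathbb{E}\|Z\|_2 \le C\epsilon\,2^{(\delta+d)j}(\log\rho_n/\rho_n)^{1/2}$, already of the target order. Choosing $t = 2(\|\Sigma\|_{\mathrm{op}}\log\rho_n)^{1/2}$ makes the exponential bound equal to $\rho_n^{-2}$, so the conclusion will follow once one establishes the \emph{sharp} inequality $\|\Sigma\|_{\mathrm{op}} \le C\epsilon^2 2^{2(\delta+d)j}\rho_n^{-1}$, i.e.\ an operator-norm bound of the same order as the diagonal entries.

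The main obstacle is precisely this operator-norm bound. A crude Gershgorin estimate would yield only $\|\Sigma\|_{\mathrm{op}} \le L \max_k \mathrm{Var}(\widehat\beta_{j,k})$, losing a factor $L \sim \log\rho_n$ that would propagate to a spurious extra $\sqrt{\log\rho_n}$ in the final bound. To avoid this, one exploits the translation structure $\psi_{j,k}(\cdot) = 2^{j/2}\psi(2^j\cdot-k)$, which makes $\overline{\FT(\psi_{j,k})(\ell)}$ depend on $k$ only through a phase $e^{2\pi i\ell k/2^j}$. Reusing the Fourier decomposition from the proof of Proposition~\ref{cooopp} and factoring out this phase, one obtains for every $a \in \mathbb{R}^L$ an expression of the form
\begin{equation*}
a^{T}\Sigma a = \sum_{\ell\in\mathcal{C}_j,\,v} |c_{\ell,v}|^2 \biggl|\sum_{k\in B_{j,K}}a_k\, e^{2\pi i\ell k/2^j}\biggr|^2,
\end{equation*}
where the coefficients $c_{\ell,v}$ do not depend on $k$. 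Grouping the outer sum by residue class modulo $2^j$, using Parseval's identity $\sum_{q=0}^{2^j-1}\bigl|\sum_k a_k e^{2\pi iqk/2^j}\bigr|^2 = 2^j\|a\|_2^2$, and using that by \eqref{cal} each residue class modulo $2^j$ meets $\mathcal{C}_j$ in at most an absolute number of integers, one obtains $\|\Sigma\|_{\mathrm{op}} \le C\cdot 2^j\max_{\ell\in\mathcal{C}_j}\sum_v|c_{\ell,v}|^2$. Plugging the ordinary smoothness assumption \eqref{cond} into $|c_{\ell,v}|^2$ and following the same chain of bounds as in \eqref{dol}-\eqref{sigg} then delivers the desired $\|\Sigma\|_{\mathrm{op}} \le C\epsilon^2 2^{2(\delta+d)j}\rho_n^{-1}$.

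Combining the two estimates gives $\|Z\|_2 \le \lambda\,2^{(\delta+d)j}(\log\rho_n/\rho_n)^{1/2}$ with probability at least $1-\rho_n^{-2}$, provided $\lambda$ is chosen large enough in terms of $\epsilon$, $c_g$, $C_g$, $\delta$ and $d$. The assumption "$n$ large enough" serves only to guarantee $L\ge 1$ and to absorb the lower-order remainders arising from $\lfloor\log\rho_n\rfloor\sim\log\rho_n$.
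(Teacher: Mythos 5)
Your proposal is correct and follows essentially the same route as the paper: the Cirelson--Borell Gaussian concentration inequality applied to $\sup_{a\in \mathbb{B}(1)}\langle a,Z\rangle=\|Z\|_2$, Jensen plus the trace bound \eqref{sigg} for the mean, and a sharp sup-variance (operator-norm) estimate of order $\epsilon^2 2^{2(\delta+d)j}\rho_n^{-1}$ that avoids the lossy extra factor $L\sim\log\rho_n$. The only divergence is in that last step: where you factor out the phase $e^{2\pi i\ell k/2^j}$ and use a discrete Parseval identity over residue classes modulo $2^j$, the paper obtains the same bound in one line from the continuous Parseval identity $\sum_{\ell\in\mathcal{C}_j}\bigl|\FT\bigl(\sum_{k}\overline{a}_k\psi_{j,k}\bigr)(\ell)\bigr|^2=\int_0^1\bigl|\sum_{k}\overline{a}_k\psi_{j,k}(t)\bigr|^2dt=\sum_k|a_k|^2$, exploiting orthonormality of $\{\psi_{j,k}\}_k$; both arguments are valid and yield the same constant-order bound.
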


\begin{proof}
We need the Tsirelson inequality stated in Lemma\ref{cirel} below.
\begin{lemma}[\cite{cirelson}]\label{cirel} Let $(\vartheta_t)_{t\in D}$ be a centered Gaussian process.
If 
\[
 \mathbb{E}\left(\sup_{t\in D}\vartheta_t\right)\le N, \qquad \sup_{t\in D} \mathbb{V}\left(\vartheta_t\right)\le V
\]
then, for any $x>0$, we have
\begin{equation*}
\mathbb{P}\left(\sup_{t\in {D}}\vartheta_t\ge x+N\right)\le \exp\left(-\frac{x^2}{2V}\right).
\end{equation*}
\end{lemma}
For the sake of notational clarity, let 
\[
 V_{j,k}=\widehat\beta_{j,k}-\beta_{j,k} .
\]
Recall that, by Proposition \ref{cooopp}, we have $V_{j,k}\sim\mathcal{N}\left(0,\rho_n^{-2}\sigma_{j,k}^2\right)$, where $\sigma_{j,k}^2$ is given in \eqref{tabb}. 
Let $\mathbb{B}(1)$ the unit $2$-norm ball in $\mathbb{C}^{\card(B_{j,K})}$, i.e. $\mathbb{B}(1)=\lbrace a \in \mathbb{C}^{\card(B_{j,K})}; \ \sum_{k\in B_{j,K}}|a_k|^2\le 1 \rbrace$. 
For any $a\in \mathbb{B}(1)$, let ${Z}(a)$ be the centered Gaussian process defined by 
\begin{align*}
 Z(a) & = \sum_{k\in B_{j,K}}a_{k} V_{j,k}\\
&= \epsilon \frac{1}{\rho_n}\sum_{v=1}^n \frac{1}{(1+\sigma_v^2)^{\delta}}\sum_{\ell\in\mathcal{C}_j}(2\pi i\ell)^{d}\frac{e_{\ell,v}}{\FT(g_v)(\ell)} \sum_{k\in B_{j,K}}a_{k}\overline{\FT\left(\psi_{j,k}\right)}(\ell).
\end{align*}

By a simple Legendre-Fenchel conjugacy argument, we have 
\[
\sup_{a\in \mathbb{B}(1)}{Z}(a) =\left(\sum_{k\in B_{j,K}}|V_{j,k}|^2\right)^{1/2}=\left(\sum_{k\in B_{j,K}}|\widehat \beta_{j,k}-\beta_{j,k}|^2\right)^{1/2}.
\]
Now, let us determine the values of $N$ and $V$ which appeared in the Tsirelson inequality.

\paragraph*{Value of $N$.} 
Using the Jensen inequality and \eqref{sigg}, we obtain
\begin{align*}
\mathbb{E}\left( \sup_{a\in \mathbb{B}(1)} {{Z}}(a)\right) & = \mathbb{E}\left( \left(\sum_{k\in B_{j,K}}|V_{j,k}|^2\right)^{1/2}\right) \le \left(\sum_{k\in B_{j,K}}\mathbb{E} \left( |V_{j,k}|^2\right)\right)^{1/2}\\
& \le C \left(\rho_n^{-2}\sum_{k\in B_{j,K}}\sigma_{j,k}^2\right)^{1/2}\le C \left(\rho_n^{-2}\epsilon^2 \rho_n2^{2(\delta+d) j}\card(B_{j,K})\right)^{1/2} \\
& \leq C\epsilon 2^{(\delta+d) j} \left(\log \rho_n /\rho_n\right)^{1/2} ~.
\end{align*}
Hence $N= C\epsilon 2^{(\delta+d) j}(\log \rho_n/\rho_n)^{1/2}$.

\paragraph*{Value of $V$.}
Note that, for any $(\ell, \ell') \in \mathbb{Z}^2$ and any $(v,v')\in\{1,\ldots,n\}^2$, 
\[
\mathbb{E}\left(e_{\ell,n}\overline{e}_{\ell',v'}\right)=
\begin{cases} 1 & {\rm if} \ \ell=\ell' \ {\rm and} \  v=v',\\
0 & {\rm otherwise}.
\end{cases}
\]
It then follows that 
\begin{align}\label{collier}
&\sup_{a\in  {\mathbb{B}(1)} } \mathbb{V} ( {Z}(a) )  =  \sup_{a\in \mathbb{B}(1)} \mathbb{E} \left( \left|  \sum_{k\in B_{j,K}}a_kV_{j,k}\right|^2\right) \nonumber \\
&=\sup_{a\in \mathbb{B}(1)} \mathbb{E} \left(  \sum_{k\in B_{j,K}}\sum_{k'\in B_{j,K}} {a_k}\overline{a}_{k'}V_{j,k}\overline{V}_{j,k'}\right)\nonumber  \\
&=\epsilon^2 \rho_n^{-2}\sup_{a\in \mathbb{B}(1)} \sum_{k\in B_{j,K}}\sum_{k'\in B_{j,K}} {a_{k}} \overline{a}_{k'} \sum_{\ell\in\mathcal{C}_j}\sum_{\ell'\in \mathcal{C}_j}\sum_{v'=1}^n\sum_{v=1}^n\frac{1}{(1+\sigma_v^2)^{\delta}}\frac{1}{(1+\sigma_{v'}^2)^{\delta}}\times \nonumber \\
&~~~~\frac{(2\pi i \ell)^d}{\FT(g_v)(\ell)}\overline{\FT(\psi_{j,k} )}(\ell)\frac{\overline{(2\pi i \ell ')^d}}{\overline{\FT(g_{v'})}(\ell ')}{\FT(\psi_{j,k'})(\ell ')} \mathbb{E}\left(e_{\ell,v}\overline{e}_{\ell',v'}\right) \nonumber \\
&=\epsilon^2 \rho_n^{-2}\sup_{a\in \mathbb{B}(1)}  \sum_{k\in B_{j,K}}\sum_{k'\in B_{j,K}} {a_{k}}\overline{a}_{k'}  \sum_{\ell\in\mathcal{C}_j} \sum_{v=1}^n\frac{1}{(1+\sigma_v^2)^{2\delta}}\frac{(2\pi  \ell)^{2d}}{|\FT(g_v)(\ell)|^2} \overline{\FT\left(\psi_{j,k}\right)}(\ell){\FT(\psi_{j,k'})(\ell)} \nonumber \\
&= \epsilon^2 \rho_n^{-2}\sup_{a\in \mathbb{B}(1)} \sum_{\ell\in\mathcal{C}_j}\sum_{v=1}^n\frac{1}{(1+\sigma_v^2)^{2\delta}} \frac{(2\pi  \ell)^{2d}}{|\FT(g_v)(\ell)|^2} \left|\sum_{k\in B_{j,K}} \overline{a}_{k}\FT\left(\psi_{j,k}\right)(\ell)\right|^2.
\end{align}

For any $a\in \mathbb{B}(1)$, the Parseval identity and the fact that $\{\psi_{j,k}\}_{k=0,\ldots,2^{j}-1}$ are orthonormal yields
\begin{align}\label{rock}
\sum_{\ell\in\mathcal{C}_j}\left|\sum_{k\in B_{j,K}} \overline{a}_{k}\FT(\psi_{j,k} )(\ell)\right|^2 
& = 
\sum_{\ell\in\mathcal{C}_j} \left|\FT\left(\sum_{k\in B_{j,K}} \overline{a}_{k}\psi_{j,k} \right)(\ell)\right|^2 \nonumber \\
& =   
\int_{0}^{1} \left|\sum_{k\in B_{j,K}} \overline{a}_{k}\psi_{j,k}(t)\right|^2dt\nonumber \\
& =  
\sum_{k\in B_{j,K}} |a_k|^2\le 1.
\end{align}
Piecing \eqref{dol}, \eqref{collier} and \eqref{rock} together, we get
\begin{align*}
\sup_{a\in  {\mathbb{B}(1)} } \mathbb{V} ( {Z}(a) ) & \le   C\epsilon^2 \rho_n^{-1}2^{2(\delta+d) j}\sup_{a\in \mathbb{B}(1)} \sum_{\ell\in\mathcal{C}_j} \left| \sum_{k\in B_{j,K}} \overline{a}_{k}\FT\left(\psi_{j,k}\right)(\ell)\right|^2\\
& \le C\epsilon^2 \rho_n^{-1}2^{2(\delta+d) j}.
\end{align*}
Hence it is sufficient to take $V=C\epsilon^2 \rho_n^{-1}2^{2(\delta+d) j}$.

Taking $\lambda$ large enough and $x=2^{-1}{\lambda} \epsilon  2^{(\delta+d) j}(\log \rho_n/\rho_n)^{1/2}$, the Tsirelson inequality (see Lemma \ref{cirel}) yields
\begin{align*}
\lefteqn{\mathbb{P} \left( \left(\sum_{k\in B_{j,K}} |V_{j,k}|^2\right)^{1/2}  \ge {\lambda} \epsilon 2^{(\delta+d) j}(\log \rho_n/\rho_n)^{1/2}\right) } &  \nonumber \\
& \le
\mathbb{P} \left( \left(\sum_{k\in B_{j,K}}|V_{j,k}|^2\right)^{1/2}  \ge 2^{-1}{\lambda} \epsilon 2^{(\delta+d) j}(\log \rho_n/\rho_n)^{1/2}+ N\right) \nonumber \\
& =
\mathbb{P}\left( \sup_{a\in \mathbb{B}(1)} {{Z}}(a) \ge x+N\right)\le \exp\left(-{x^2}/{(2V)}\right)\le \exp\left( -C\lambda^2 \log \rho_n\right)\nonumber \\
& \le
\rho_n^{-2}.
\end{align*}
Proposition \ref{cop} is proved.
\end{proof}

\subsection{Proof of Theorem~\ref{maison}}
\begin{proof}
Plugging Propositions \ref{copp} and \ref{cop} into \cite[Theorem 3.1]{cfs}, we end the proof of Theorem \ref{maison}.
\end{proof}

\subsection{Proof of Theorem~\ref{maison2}} 
\begin{proof}
Let us now present a consequence of the Fano lemma.

\begin{lemma}\label{fanost} 
Let $m\in \mathbb{N}^*$ and $A$ be a $\sigma$-algebra on the space $\Omega$. For any $i\in \{0,\ldots,m\}$, let $A_i\in A$ such that, for any $(i,j)\in \{0,\ldots,m\}^2$ with $i\ne j$, $$A_i \cap A_j=\varnothing.$$ Let $(\mathbb{P}_i)_{i\in \{0,\ldots,m\}}$ be $m+1$ probability measures on $(\Omega, A)$. Then
\[
 \sup_{i\in \{0,\ldots,m\}}\mathbb{P}_i\left(A_i^c\right)\ge \min\left( 2^{-1}, \exp(-{3}e^{-1})\sqrt{m}\exp(- \chi_m)\right),
\]
where
\begin{equation*}
\chi_m=\inf_{v\in \{0,\ldots,m\}}\frac{1}{m}\sum_{\underset{k\ne v}{k\in \{0,\ldots,m\}}}K(\mathbb{P}_k,\mathbb{P}_v),
\end{equation*}
$A^c$ denotes the complement of $A$ in $\Omega$ and $K$ is the Kullbak-Leibler divergence defined by
\begin{equation*}
K(\mathbb{P},\mathbb{Q})=
\begin{cases} \int \ln\left( \frac{d \mathbb{P}}{d \mathbb{Q}}\right)d \mathbb{P} & \rm{if} \ \ \mathbb{P} \ll \mathbb{Q}, \\
 \infty & \rm{otherwise}.
\end{cases}
\end{equation*}
\end{lemma}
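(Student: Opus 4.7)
The plan is to prove this Birgé-type sharpening of Fano's lemma by a data-processing reduction to a finite alphabet, a Bernoulli-KL lower bound applied pairwise, and a Jensen-type aggregation that introduces the average Kullback--Leibler divergence $\chi_m$.

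First I would assume $p := \sup_{i}\mathbb{P}_i(A_i^c) < 1/2$, since otherwise the first branch of the minimum is trivially achieved. Define a measurable selector $\hat\imath : \Omega \to \{0,\ldots,m,\star\}$ by $\hat\imath(\omega)=i$ if $\omega \in A_i$ (unambiguous by disjointness) and $\hat\imath(\omega)=\star$ otherwise. Letting $Q_i$ denote the push-forward of $\mathbb{P}_i$ under $\hat\imath$, one has $Q_i(\{i\}) = \mathbb{P}_i(A_i) \ge 1-p$, and since the $A_k$ are pairwise disjoint,
$$\sum_{k\ne v}Q_v(\{k\}) \;=\; \sum_{k\ne v}\mathbb{P}_v(A_k) \;\le\; 1-\mathbb{P}_v(A_v) \;\le\; p,$$
so in particular $Q_v(\{k\}) \le p$ for every $k \ne v$. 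The data-processing inequality for $K$ yields $K(Q_k,Q_v) \le K(\mathbb{P}_k,\mathbb{P}_v)$; fixing $v$ attaining the infimum in $\chi_m$, the average $\frac{1}{m}\sum_{k\ne v}K(Q_k,Q_v)$ is bounded by $\chi_m$.

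Next, a second application of data-processing on the binary partition $\{\hat\imath=k\}$ versus its complement gives the Bernoulli KL lower bound
$$K(Q_k,Q_v)\;\ge\; Q_k(\{k\})\log\frac{Q_k(\{k\})}{Q_v(\{k\})}+(1-Q_k(\{k\}))\log\frac{1-Q_k(\{k\})}{1-Q_v(\{k\})}.$$
Since $Q_k(\{k\})\ge 1-p\ge 1/2 > p/e \ge Q_v(\{k\})/e$, the monotonicity of $x\mapsto x\log(x/b)$ on $(b/e,\infty)$ bounds the first term below by $(1-p)\log((1-p)/Q_v(\{k\}))$; the second term is bounded by $-e^{-1}$ via $x\log x\ge -e^{-1}$ together with $-\log(1-Q_v(\{k\}))\ge 0$, giving
$$K(Q_k,Q_v)\;\ge\; (1-p)\log\bigl((1-p)/Q_v(\{k\})\bigr)-e^{-1}.$$
Summing over $k\ne v$ and applying Jensen's inequality to the concave map $b\mapsto -\log b$ under the constraint $\sum_{k\ne v}Q_v(\{k\})\le p$, I obtain
$$\chi_m\;\ge\;(1-p)\log\bigl((1-p)m/p\bigr)-e^{-1}.$$

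Finally, I would solve this implicit inequality in $p$. After the change of variable $t=\log(m/p)$, the inequality reads $(1-me^{-t})[t+\log(1-me^{-t})]\le \chi_m+e^{-1}$; for $p\le 1/2$ (hence $me^{-t}\le 1/2$), tracking the three $-e^{-1}$ losses (one from the $x\log x$ residual in the Bernoulli bound, one from the factor $(1-p)$ in front of $\log((1-p)m/p)$, and one from the remainder $\log(1-me^{-t})\ge -2me^{-t}\ge -1$ absorbed into the exponent) yields $t\le \chi_m+3e^{-1}-\tfrac{1}{2}\log m$, i.e. $p\ge e^{-3/e}\sqrt{m}\,e^{-\chi_m}$. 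Combining with the trivial branch $p\ge 1/2$ obtained in the first step gives the claimed minimum.

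The main obstacle is this last step: the structural inequalities are straightforward consequences of data-processing and the log-sum/Jensen inequality, but recovering precisely the prefactor $e^{-3/e}$ and the $\sqrt{m}$ growth (rather than $e^{-\chi_m}$ alone, which a naive pairwise Bretagnolle--Huber argument would yield) requires the above careful optimization of the implicit equation and a sharp balancing of the three $-e^{-1}$ contributions against the main $\log(m/p)$ term.
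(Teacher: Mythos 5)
The paper does not actually prove this lemma: it is imported wholesale from the literature (``The proof of Lemma \ref{fanost} can be found in \cite[Lemma 3.3]{kertem}''), so there is no in-paper argument to compare yours against. What you propose is a genuine, essentially correct proof along the standard Birg\'e/DeVore--Kerkyacharian--Picard--Temlyakov lines: the reduction to the finite alphabet via the disjoint-selector map and data processing, the binary KL lower bound, the Jensen step under the constraint $\sum_{k\ne v}Q_v(\{k\})\le p$, and the resulting implicit inequality $\chi_m\ge(1-p)\log((1-p)m/p)-e^{-1}$ are all sound, and the final bound $p\ge e^{-3/e}\sqrt{m}\,e^{-\chi_m}$ does follow from it. Two details need repair, though. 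First, your displayed conclusion $t\le\chi_m+3e^{-1}-\tfrac12\log m$ has a sign error: since $t=\log(m/p)=\log m+\log(1/p)$ and the correct chain gives $\log(1/p)\le\chi_m-\tfrac12\log m+3e^{-1}$, you should have $t\le\chi_m+\tfrac12\log m+3e^{-1}$, which is exactly what yields $p\ge e^{-3/e}\sqrt m\,e^{-\chi_m}$ (your written version would give the false stronger factor $m^{3/2}$). Second, your accounting of the third $e^{-1}$ loss via $\log(1-me^{-t})\ge-2me^{-t}\ge-1$ only loses a constant $1>e^{-1}$ and would degrade the prefactor to $e^{-2/e-1}$; the correct bookkeeping is to apply $x\log x\ge-e^{-1}$ once more to $(1-p)\log(1-p)$, and to absorb the cross term as $p\log(m/p)=p\log m+p\log(1/p)\le\tfrac12\log m+e^{-1}$, which is where the $\sqrt m$ and the third $e^{-1}$ actually come from. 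Both fixes use only tools already in your argument, so the proof is complete once these are corrected.
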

The proof of Lemma \ref{fanost} can be found in \cite[Lemma 3.3]{kertem}. For further details and applications of the Fano lemma, see \cite{tsybakov}.

In what follows, we distinguish $\psi$ and $\psi^{\mathrm{per}}$ (see Section \ref{perio}).  For any integrable function $h$ on $\mathbb{R}$, we set
\[
 {\FT}_*(h)(\ell)=\int_{-\infty}^{\infty}h(t)e^{-2 i\pi \ell t}dt, \qquad \ell\in \mathbb{Z}.
\]

Consider the Besov balls $\Bspr$ (see \eqref{besovv}). Let $j_0$ be an integer suitably chosen below. For any $\varepsilon=(\varepsilon_k)_{k\in\{0,\ldots,2^{j_0}-1\}}\in \{0,1\}^{2^{j_0}} $ and $d\in \mathbb{N}^*$, set
\begin{equation*}
h_{\varepsilon}(t) = M_* 2^{-j_0(s+1/2)} \sum_{k=0}^{2^{j_0}-1}\varepsilon_k \frac{1}{(d-1) !}\sum_{l\in\mathbb{Z}}\int_{-\infty}^{t+l}(t+l-y)^{d-1}\psi_{j_0,k}(y)dy, ~ t \in \lbrack 0,1 \rbrack,
\end{equation*}
and, if $d=0$, set $h_{\varepsilon}(t)=M_* 2^{-j_0(s+1/2)} \sum_{k=0}^{2^{j_0}-1}\varepsilon_k\psi^{\mathrm{per}}_{j_0,k}(t), t \in [0,1]$. Notice that, owing to \eqref{ratratrat}, $h_{\varepsilon}$ exists. Moreover, it is $1$-periodic. Using the Cauchy formula for repeated integration, we have
\[
h^{(d)}_{\varepsilon}(t)=M_*2^{-j_0(s+1/2)} \sum_{k=0}^{2^{j_0}-1}\varepsilon_k\psi^{\mathrm{per}}_{j_0,k}(t), \qquad t \in \lbrack 0,1 \rbrack.
\]
So, for any $j\ge j_*$ and any $k\in \{0,\ldots,2^{j}-1\}$, the (mother) wavelet coefficient of $h^{(d)}_{\varepsilon}$ is
\begin{align*}
\beta_{j,k}=\int_{0}^{1}h^{(d)}_{\varepsilon}(t) \opsi^{\mathrm{per}}_{j,k}(t)dt=
\begin{cases}
M_*\varepsilon_k 2^{-j_0(s+1/2)}, & \ {\rm if} \ j=j_0,\\
0, & \ {\rm otherwise.}
\end{cases}
\end{align*}
Therefore $h_{\varepsilon}^{(d)}\in \Bspr$. The Varshamov-Gilbert theorem (see \cite[Lemma 2.7]{tsybakov}) asserts that there exist a set 
$E_{j_0}=\left\lbrace\varepsilon^{(0)},\ldots,\varepsilon^{(T_{j_0})}\right\rbrace$ and two constants, $c\in\rbrack0,1\lbrack$ and $\alpha\in\rbrack0,1\lbrack$, 
such that, for any $u\in\{0,\ldots,T_{j_0}\}$, $\varepsilon^{(u)}=(\varepsilon_{k}^{(u)})_{k\in\{0,\ldots,2^{j_0}-1\}}\in\{0,1\}^{2^{j_0}}$ and any $(u,v)\in\{0,\ldots,T_{j_0}\}^2$ with $ u<v$, the following inequalities hold:
\begin{equation*}
\sum_{k=0}^{2^{j_0}-1}|\varepsilon_k^{(u)}-\varepsilon^{(v)}_{k}|\ge c{2^{j_0}}, \qquad T_{j_0}\ge e^{\alpha 2^{j_0}}.
\end{equation*}
Considering such a set $E_{j_0}$, for any $(u,v)\in \{0,\ldots,T_{j_0}\}^2$ with $u\ne v$, we have by orthonormality of the collection $\{\psi_{j_0,k}\}_{k=0,\ldots,2^{j_0}-1}$
\begin{align*}
\lefteqn{\left(\int_{0}^{1} \left(h^{(d)}_{\varepsilon^{(u)}}(t)-h^{(d)}_{\varepsilon^{(v)}}(t)\right)^2dt\right)^{1/2}} & \\
&  =  M_* c 2^{-j_0(s+1/2)} \left(\sum_{k=0}^{2^{j_0}-1}\left|\varepsilon_k^{(u)}-\varepsilon_k^{(v)}\right|^2\right)^{1/2}\\
&  =  M_* c 2^{-j_0(s+1/2)} \left(\sum_{k=0}^{2^{j_0}-1}\left|\varepsilon_k^{(u)}-\varepsilon_k^{(v)}\right|\right)^{1/2}\\
& \ge 2 \delta_{j_0},
\end{align*}
where
\[
\delta_{j_0}= M_* c^{1/2}2^{j_0/2}2^{-j_0(s+1/2)}=M_* c^{1/2}2^{-j_0s}.
\]
Using the Markov inequality, for any estimator $\widetilde{f^{(d)}}$ of $f^{(d)}$, we have
\begin{equation*}
 \delta_{j_0}^{-2}\sup_{f^{(d)}\in \Bspr}\mathbb{E} \left(\int_{0}^{1} \left(\widetilde{f^{(d)}}(t)-f^{(d)}(t)\right)^2dt\right)\ge \sup_{u\in \{0,\ldots,T_{j_0}\}}\mathbb{P}_{h_{\varepsilon^{(u)}}}(A_u^{c}) = p,
\end{equation*}
where 
\[
A_u=\left \lbrace \left(\int_{0}^{1} \left(\widetilde{f^{(d)}}(t)-h_{\varepsilon^{(u)}}^{(d)}(t)\right)^2dt\right)^{1/2} <\delta_{j_0}\right \rbrace
\] 
and  $\mathbb{P}_f$ is the distribution of model \eqref{sous}. Notice that, for any $(u,v)\in \{0,\ldots,T_{j_0}\}^2$ with $u\not= v$, $A_u\cap A_{v}=\varnothing$. Lemma \ref{fanost} applied to the probability measures $\left(\mathbb{P}_{h_{\varepsilon^{(u)}}}\right)_{u\in \{0,\ldots,T_{j_0}\}}$ gives 
\begin{equation}\label{eternity}
p\ge \min\left( 2^{-1}, \exp(-{3}{e}^{-1})\sqrt{T_{j_0}}\exp(-\chi_{T_{j_0}})\right),
\end{equation}
where
\[
\chi_{T_{j_0}}=\inf_{v\in \{0,\ldots,T_{j_0}\}}\frac{1}{T_{j_0}}\sum_{\underset{u\ne
v}{u\in \{0,\ldots,T_{j_0}\}}}K\left(\mathbb{P}_{h_{\varepsilon^{(u)}}},\mathbb{P}_{h_{\varepsilon^{(v)}}}\right).
\]
Let's now  bound $\chi_{T_{j_0}}$. For any functions $f_1$ and $f_2$ in $\Lper([0,1])$, we have
\begin{align*}
K\left(\mathbb{P}_{f_1},\mathbb{P}_{f_2}\right)& = \frac{1}{2\epsilon^2}\sum_{v=1}^n\int_{0}^{1}  \left( (f_1\star g_v)(t)-(f_2\star g_v)(t)\right)^2dt\\
& = \frac{1}{2\epsilon^2}\sum_{v=1}^n\int_{0}^{1}  \left(( (f_1-f_2)\star g_v)(t)\right)^2dt.
\end{align*}
The Parseval identity yields
\begin{align*}
K\left(\mathbb{P}_{f_1},\mathbb{P}_{f_2}\right)& = \frac{1}{2\epsilon^2}\sum_{v=1}^n\sum_{\ell \in \mathbb{Z}} | \FT( (f_1-f_2)\star g_v)(\ell)|^2\\
& = \frac{1}{2\epsilon^2}\sum_{v=1}^n \sum_{\ell \in \mathbb{Z}}\left|\FT(f_1-f_2)(\ell)\right|^2 |\FT(g_v)(\ell)|^2.
\end{align*}
So, for any $(u,v)\in \{0,\ldots,T_{j_0}\}^2$ with $u\ne v$, we have
\begin{equation}\label{vanish}
K\left(\mathbb{P}_{h_{\varepsilon^{(u)}}}, \mathbb{P}_{h_{\varepsilon^{(v)}}}\right)=\frac{1}{2\epsilon^2}\sum_{v=1}^n\sum_{\ell \in \mathbb{Z}} \left|\FT\left(h_{\varepsilon^{(u)}}-h_{\varepsilon^{(v)}}\right)(\ell)\right|^2 |\FT(g_v)(\ell)|^2.
\end{equation}
By definition, for any $(u,v)\in \{0,\ldots,T_{j_0}\}^2$ with $u\ne v$ and $\ell \in \mathbb{Z}$, we have
\begin{align}\label{ret}
\lefteqn{\FT\left(h_{\varepsilon^{(u)}}-h_{\varepsilon^{(v)}}\right)(\ell)} & \nonumber \\
& = M_* 2^{-j_0(s+1/2)} \sum_{k=0}^{2^{j_0}-1}\left(\varepsilon^{(u)}_k-\varepsilon^{(v)}_k\right) \times \nonumber \\
& \frac{1}{(d-1) !}\sum_{l\in\mathbb{Z}}\FT\left(\int_{-\infty}^{\cdot+l}(\cdot+l-y)^{d-1}\psi_{j_0,k}(y)dy\right)(\ell)\nonumber \\  
& = M_* 2^{-j_0(s+1/2)} \sum_{k=0}^{2^{j_0}-1}\left(\varepsilon^{(u)}_k-\varepsilon^{(v)}_k\right) \times \nonumber \\
& \frac{1}{(d-1) !}{\FT}_*\left(\int_{\infty}^\cdot(\cdot-y)^{d-1}\psi_{j_0,k}(y)dy\right)(\ell). 
\end{align}
Let, for any $k\in \{0,\ldots,2^{j_0}-1\}$,
\[
\theta_k(t)=\frac{1}{(d-1)!}\int_{-\infty}^t(t-y)^{d-1}\psi_{j_0,k}(y)dy, \qquad t \in [0,1].
\] 
Then, for any $u\in \{0,\ldots,d\}$, thanks to \eqref{reg2}, $\lim_{|t|\rightarrow \infty}\theta_k^{(u)}(t)=0$. Consequently, for any $\ell \in \mathbb{Z}$,
\[
(2\pi i \ell)^d{\FT}_*(\theta_k)(\ell)={\FT}_*\left(\theta_k^{(d)}\right)(\ell)={\FT}(\psi^{\mathrm{per}}_{j_0,k})(\ell).
\]
So, for any $\ell \in \mathcal{C}_{j_0}$ (excluding $0$), \eqref{ret} implies that 
\begin{align}\label{ret2}
\lefteqn{\FT(h_{\varepsilon^{(u)}}-h_{\varepsilon^{(v)}})(\ell)} &  \nonumber \\
& = M_* 2^{-j_0(s+1/2)} \sum_{k=0}^{2^{j_0}-1}\left(\varepsilon^{(u)}_k-\varepsilon^{(v)}_k\right) \frac{1}{(2\pi i\ell)^d}\FT\left(\psi^{\mathrm{per}}_{j_0,k}\right)(\ell),
\end{align}
which entails in particular that $\supp\left(\FT(h_{\varepsilon^{(u)}}-h_{\varepsilon^{(v)}})\right) = \mathcal{C}_{j_0}$. This in conjunction with equalities \eqref{vanish} and \eqref{ret2} imply that
\begin{align}\label{debut}
&K\left(\mathbb{P}_{h_{\varepsilon^{(u)}}}, \mathbb{P}_{h_{\varepsilon^{(v)}}}\right)\nonumber\\
&=\frac{M_*^2}{2\epsilon^2}2^{-2j_0(s+1/2)}\sum_{v=1}^n\sum_{\ell \in \mathcal{C}_{j_0}}  \left|\sum_{k=0}^{2^{j_0}-1}\left(\varepsilon^{(u)}_k-\varepsilon^{(v)}_k\right) \FT\left(\psi^{\mathrm{per}}_{j_0,k}\right)(\ell)\right|^2\frac{1}{(2\pi \ell)^{2d}} |\FT(g_v)(\ell)|^2.
\end{align}

By assumptions \eqref{cond} and \eqref{cal}, for any $v\in \{1,\ldots,n\}$,
\begin{align}\label{horiz}
\sup_{\ell \in \mathcal{C}_{j_0}}\left(\frac{1}{(2\pi \ell)^{2d}} |\FT(g_v)(\ell)|^2\right)
& \le  C\sup_{\ell \in \mathcal{C}_{j_0}}\left(\frac{1}{(2\pi \ell)^{2d}}\left(1+\sigma_v^2\ell^2\right)^{-\delta}\right) \nonumber \\
& \le  C\sigma_v^{-2\delta}\sup_{\ell \in \mathcal{C}_{j_0}}\left(\ell^{-2(\delta+d)}\right) \le C\sigma_v^{-2\delta} 2^{-2j_0(\delta+d)}.
\end{align}
Moreover, the Parseval identity and orthonormality of the family $\left\{\psi^{\mathrm{per}}_{j_0,k}\right\}_{k=0,\ldots,2^{j_0}-1}$ imply that
\begin{align}\label{event}
\lefteqn{\sum_{\ell \in \mathcal{C}_{j_0}}  \left|\sum_{k=0}^{2^{j_0}-1}\left(\varepsilon^{(u)}_k-\varepsilon^{(v)}_k\right) \FT\left(\psi^{\mathrm{per}}_{j_0,k}\right)(\ell)\right|^2}& \nonumber \\ 
& = \int_{0}^{1}  \left|\sum_{k=0}^{2^{j_0}-1}\left(\varepsilon^{(u)}_k-\varepsilon^{(v)}_k\right) \psi^{\mathrm{per}}_{j_0,k}(t)\right|^2dt= \sum_{k=0}^{2^{j_0}-1}\left(\varepsilon^{(u)}_k-\varepsilon^{(v)}_k\right)^2\le 2^{j_0}. 
\end{align}
It follows from \eqref{debut}, \eqref{horiz} and \eqref{event} that
\begin{equation*}
K\left(\mathbb{P}_{h_{\varepsilon^{(u)}}},\mathbb{P}_{h_{\varepsilon^{(v)}}}\right)\le C2^{-2j_0(s+1/2)}2^{-2j_0(\delta+d)}2^{j_0}\sum_{v=1}^n\sigma_v^{-2\delta}=C\rho^*_{n}2^{-2j_0(s+1/2+\delta+d)}2^{j_0}.
\end{equation*}

Hence
\begin{align}\label{shin}
\chi_{T_{j_0}}& = \inf_{v\in \{0,\ldots,T_{j_0}\}}\frac{1}{T_{j_0}}\sum_{\underset{u\ne
v}{u\in \{0,\ldots,T_{j_0}\}}}K\left(\mathbb{P}_{h_{\varepsilon^{(u)}}},\mathbb{P}_{h_{\varepsilon^{(v)}}}\right)\nonumber \\
& \le C\rho^*_{n}2^{-2j_0(s+1/2+\delta+d)}2^{j_0}.
\end{align}
Putting \eqref{eternity} and \eqref{shin} together and choosing $j_0$ such that $$2^{-j_0(s+1/2+\delta+d)}= c_0(\rho^*_{n})^{-1/2},$$
where $c_0$ denotes a well chosen constant, for any estimator $\widetilde{f^{(d)}}$ of $f^{(d)}$, we have
\begin{align*}
\delta_{j_0}^{-2}\sup_{f^{(d)}\in \Bspr}\mathbb{E}\left(\int_{0}^{1}\left(\widetilde{f^{(d)}}(t)-f^{(d)}(t)\right)^2dt\right)& \ge c\exp\left( (\alpha /2) 2^{j_0}-Cc_0^2 2^{j_0}\right)\\
& \ge c,
\end{align*}
where $$\delta_{j_0}=c 2^{-j_0s}=c(\rho^*_{n})^{-s/(2s+2\delta+2d+1)}.$$
This complete the proof of Theorem \ref{maison2}.
\end{proof}

\paragraph*{Acknowledgements} 
The authors are very grateful to the Associate Editor and the anonymous referees for their careful reading and valuable comments that have led to an improvement of the presentation. This work is supported by ANR grant NatImages, ANR-08-EMER-009.

\bibliographystyle{plain}
\bibliography{dec-multi}

\end{document}